\long\def\forget#1{}
\newcounter{commentcounter}
\def\?{\ 
{\bf\color{red}???}\ 
\immediate\write16{}
\immediate\write16{Warning: There was still a question mark . . . }
\immediate\write16{}}
\theoremstyle{plain}
\newtheorem{theorem}{Theorem}[section]
\newtheorem{lemma}[theorem]{Lemma}
\newtheorem{corollary}[theorem]{Corollary}
\newtheorem{proposition}[theorem]{Proposition}
\theoremstyle{definition}
\newtheorem{definition}[theorem]{Definition}
\newtheorem{definition-theorem}[theorem]{Definition-Theorem}
\newtheorem{definition-remark}[theorem]{Definition-Remark}
\newtheorem{example}[theorem]{Example}
\newtheorem{remark}[theorem]{Remark}
\theoremstyle{remark}
\newcounter{zahl}
\def\theenumi{(\alph{enumi})}
\def\p@enumii{\theenumi}
\newcommand{\DS}{\displaystyle}
\newcommand{\TS}{\textstyle}
\newcommand{\SC}{\scriptstyle}
\newcommand{\SSC}{\scriptscriptstyle}
\newcommand{\cG}{\mathcal{G}}
\newcommand{\cK}{\mathcal{K}}
\newcommand{\cO}{\mathcal{O}}
\DeclareMathOperator{\Aut}{Aut}
\DeclareMathOperator{\End}{End}
\DeclareMathOperator{\Frob}{Frob}
\DeclareMathOperator{\Gal}{Gal}
\DeclareMathOperator{\GL}{GL}
\DeclareMathOperator{\Koh}{H}
\DeclareMathOperator{\CKoh}{\check H}
\DeclareMathOperator{\Hom}{Hom}
\DeclareMathOperator{\Ind}{Ind}
\DeclareMathOperator{\Int}{Int}
\DeclareMathOperator{\Isom}{Isom}
\DeclareMathOperator{\PGL}{PGL}
\DeclareMathOperator{\Quot}{Frac}
\DeclareMathOperator{\Rep}{Rep}
\DeclareMathOperator{\Res}{Res}
\DeclareMathOperator{\SL}{SL}
\DeclareMathOperator{\Spm}{Sp}
\DeclareMathOperator{\Spec}{Spec}
\DeclareMathOperator{\Spf}{Spf}
\DeclareMathOperator{\Var}{V}
\newcommand{\alg}{{\rm alg}}
\DeclareMathOperator{\charakt}{char}
\newcommand{\cont}{{\rm cont}}
\newcommand{\dom}{{\rm dom}}
\DeclareMathOperator{\equi}{equi}
\newcommand{\et}{{\rm\acute{e}t\/}}
\newcommand{\fppf}{{\it fppf\/}}
\newcommand{\fpqc}{{\it fpqc\/}}
\DeclareMathOperator{\id}{\,id}
\renewcommand{\mod}{{\rm\,mod\,}}
\newcommand{\red}{{\rm red}}
\newcommand{\sep}{{\rm sep}}
\DeclareMathOperator{\whtimes}{\mathchoice
            {\wh{\raisebox{0ex}[0ex]{$\DS\times$}}}
            {\wh{\raisebox{0ex}[0ex]{$\TS\times$}}}
            {\wh{\raisebox{0ex}[0ex]{$\SC\times$}}}
            {\wh{\raisebox{0ex}[0ex]{$\SSC\times$}}}}
\renewcommand{\phi}{\varphi}
\renewcommand{\epsilon}{\varepsilon}
\newcommand{\BOne} {{\mathchoice{\hbox{\rm1\kern-2.7pt l\kern.9pt}}
                              {\hbox{\rm1\kern-2.7pt l\kern.9pt}}
                              {\hbox{\scriptsize\rm1\kern-2.3pt l\kern.4pt}}
                              {\hbox{\scriptsize\rm1\kern-2.4pt l\kern.5pt}}}}
\newcommand{\BD}{{\mathbb{D}}}
\newcommand{\BF}{{\mathbb{F}}}
\newcommand{\BG}{{\mathbb{G}}}
\newcommand{\BH}{{\mathbb{H}}}
\newcommand{\BL}{{\mathbb{L}}}
\newcommand{\BN}{{\mathbb{N}}}
\newcommand{\BP}{{\mathbb{P}}}
\newcommand{\BQ}{{\mathbb{Q}}}
\newcommand{\BZ}{{\mathbb{Z}}}
\newcommand{\CA}{{\mathcal{A}}}
\newcommand{\CC}{{\mathcal{C}}}
\newcommand{\CE}{{\mathcal{E}}}
\newcommand{\CF}{{\mathcal{F}}}
\newcommand{\CG}{{\mathcal{G}}}
\newcommand{\CI}{{\mathcal{I}}}
\newcommand{\CJ}{{\mathcal{J}}}
\newcommand{\CK}{{\mathcal{K}}}
\newcommand{\CL}{{\mathcal{L}}}
\newcommand{\CM}{{\mathcal{M}}}
\newcommand{\CN}{{\mathcal{N}}}
\newcommand{\CO}{{\mathcal{O}}}
\newcommand{\CP}{{\mathcal{P}}}
\newcommand{\CS}{{\mathcal{S}}}
\newcommand{\CT}{{\mathcal{T}}}
\newcommand{\CU}{{\mathcal{U}}}
\newcommand{\CV}{{\mathcal{V}}}
\newcommand{\CX}{{\mathcal{X}}}
\newcommand{\CZ}{{\mathcal{Z}}}
\newcommand{\FF}{{\mathfrak{F}}}
\newcommand{\FG}{{\mathfrak{G}}}
\newcommand{\FM}{{\mathfrak{M}}}
\newcommand{\Fa}{{\mathfrak{a}}}
\newcommand{\scrH}{{\mathscr{H}}}
\newcommand{\scrB}{{\mathscr{B}}}
\let\setminus\smallsetminus
\newcommand{\es}{\enspace}
\newcommand{\dual}{^{\SSC\lor}}
\newcommand{\mal}{^{\SSC\times}}
\newcommand{\ul}[1]{{\underline{#1}}}
\newcommand{\ol}[1]{{\overline{#1}}}
\newcommand{\wh}[1]{{\widehat{#1}}}
\newcommand{\wt}[1]{{\widetilde{#1}}}
\newcommand{\invlim}[1][]{\ifthenelse{\equal{#1}{}}% falls Argument leer
{\DS \lim_{\longleftarrow}}%                         verwende niedrige Version
{\DS \lim_{\underset{#1}{\longleftarrow}}}%  sonst:  verwende Argument
}
\newcommand{\dirlim}[1][]{\ifthenelse{\equal{#1}{}}% falls Argument leer
{\DS \lim_{\longrightarrow}}%                        verwende niedrige Version
{\DS \lim_{\underset{#1}{\longrightarrow}}}% sonst:  verwende Argument
}
\newcommand{\dbl}{{\mathchoice{\mbox{\rm [\hspace{-0.15em}[}}
                              {\mbox{\rm [\hspace{-0.15em}[}}
                              {\mbox{\scriptsize\rm [\hspace{-0.15em}[}}
                              {\mbox{\tiny\rm [\hspace{-0.15em}[}}}}
\newcommand{\dbr}{{\mathchoice{\mbox{\rm ]\hspace{-0.15em}]}}
                              {\mbox{\rm ]\hspace{-0.15em}]}}
                              {\mbox{\scriptsize\rm ]\hspace{-0.15em}]}}
                              {\mbox{\tiny\rm ]\hspace{-0.15em}]}}}}
\newcommand{\dpl}{{\mathchoice{\mbox{\rm (\hspace{-0.15em}(}}
                              {\mbox{\rm (\hspace{-0.15em}(}}
                              {\mbox{\scriptsize\rm (\hspace{-0.15em}(}}
                              {\mbox{\tiny\rm (\hspace{-0.15em}(}}}}
\newcommand{\dpr}{{\mathchoice{\mbox{\rm )\hspace{-0.15em})}}
                              {\mbox{\rm )\hspace{-0.15em})}}
                              {\mbox{\scriptsize\rm )\hspace{-0.15em})}}
                              {\mbox{\tiny\rm )\hspace{-0.15em})}}}}
\newcommand{\dotBD}{\vbox{\hbox{\kern2pt\bf.}\vskip-4.5pt\hbox{$\BD$}}}
\DeclareMathOperator{\QIsog}{QIsog}
\DeclareMathOperator{\Nilp}{\CN \!{\it ilp}}
\DeclareMathOperator{\Sets}{\CS \!{\it ets}}
\def\ulM{{\underline{M\!}\,}{}}
\def\olB{{\,\overline{\!B}}}
\def\olS{{\,\overline{\!S}}}
\def\s{\sigma^\ast}
\def\longto{\longrightarrow}
\def\into{\hookrightarrow}
\def\isoto{\stackrel{}{\mbox{\hspace{1mm}\raisebox{+1.4mm}{$\SC\sim$}\hspace{-3.5mm}$\longrightarrow$}}}
\newbox\mybox
\def\arrover#1{\mathrel{
       \setbox\mybox=\hbox spread 1.4em{\hfil$\scriptstyle#1$\hfil}
       \vbox{\offinterlineskip\copy\mybox
             \hbox to\wd\mybox{\rightarrowfill}}}}
\newcommand{\ppsi}{\delta}
\newcommand{\RZ}{\ul{\CM}_{\ul{\BL}_0}^{\hat{Z}}}
\newcommand{\Test}{{Y}}
\newcommand{\BaseOfD}{\BF}
\newcommand{\BaseFldOfLocSht}{k}
\newcommand{\BaseFldInSectUnif}{k}
\newcommand{\AlgClFld}{k}
\newcommand{\genericG}{P}
\newcommand{\eeeta}{\xi}
\newcommand{\Sht}{Sht}
\newcommand{\EtSht}{\acute{E}tSht}
\newcommand{\Vect}{V\!ect}
\DeclareMathOperator{\SpaceFl}{\CF\ell}
\newcommand{\tauGlob}{\tau}
\newcommand{\tauLoc}{\hat\tau}
\newcommand{\charsect}{s}
\DeclareMathOperator{\AbSh}{\CA {\it b}-\CS {\it h}}
\newcommand{\DefIndAlgStack}{Definition~3.14}
\newcommand{\ThmRepNablaH}{Theorem~3.15}
\newcommand{\RemDrinfeldVarshavsky}{Remark~3.19}
\newcommand{\EqBounded}{\cite[Equations~(3.2) and (3.3)]{AH_Unif}}
\newcommand{\LevelStructure}{Chapter~6}
\begin{document}

%%%%%%%%%%%%%%%%%%%%%%%%%%%%%%%%%%%%%%%%%%%%%%%%%%%%%%%%%%%%%%%%%%%%%%
\author{Esmail Arasteh Rad and Urs Hartl\footnote{Both authors acknowledge support by the Deutsche Forschungsgemeinschaft (DFG) in form of the research grant HA3002/2-1 and the SFB's 478 and 878.}}

\date{\today}

\title{Local $\BP$-shtukas and their relation to global $\FG$-shtukas}

\maketitle

\begin{abstract}
\noindent
This is the first in a sequence of two articles investigating moduli stacks of global $\FG$-shtukas, which are function field analogs for Shimura varieties. Here $\FG$ is a flat affine group scheme of finite type over a smooth projective curve, and global $\FG$-shtukas are generalizations of Drinfeld shtukas and analogs of abelian varieties with additional structure. Our moduli stacks generalize various moduli spaces used by different authors to prove instances of the Langlands program over function fields. 

In the present article we explain the relation between global $\FG$-shtukas and local $\BP$-shtukas, which are the function field analogs of $p$-divisible groups with additional structure. We prove the analog of a theorem of Serre and Tate stating the equivalence between the deformations of a global $\FG$-shtuka and its associated local $\BP$-shtukas. We also investigate local $\BP$-shtukas alone and explain their relation with Galois representations through their Tate modules. And if $\BP$ is a smooth affine group scheme with connected reductive generic fiber we prove the existence of Rapoport--Zink spaces for bounded local $\BP$-shtukas as formal schemes locally formally of finite type. In the sequel to this article we use these Rapoport--Zink spaces to uniformize the moduli stacks of global $\FG$-shtukas.

\noindent
{\it Mathematics Subject Classification (2000)\/}: 
%11F80,  % Galois representations (Discontinuous groups and automorphic forms)
11G09,  % Drinfeld Modules, higher dimensional motives
%11S20,  % Galois theory of local and $p$-adic fields
%11S25,  % Galois cohomology of local and $p$-adic fields
%13A35,  % Characteristic $p$ methods (Frobenius endomorphism) ...
%14F30,  % $p$-adic cohomology, crystalline cohomology
%14G20,  % Local ground fields
%14G22,  % Rigid analytic geometry
%14G35,  % Modular and Shimura varieties
(11G18,  % Arithmetic aspects of modular and Shimura varieties
14L05,  % Formal groups, $p$-divisible groups
14M15)  % Grassmannians, Schubert varieties, flag manifolds
%20G25,  % Linear algebraic groups over local fields and their integers
\end{abstract}

\tableofcontents

%%%%%%%%%%%%%%%%%%%%%%%%%%%%%%%%%%%%%%%%%%%%%%%%%%%%%%%%%%%%%%%%%%%%%%
%
%    Introduction
%
%%%%%%%%%%%%%%%%%%%%%%%%%%%%%%%%%%%%%%%%%%%%%%%%%%%%%%%%%%%%%%%%%%%%%%

\thispagestyle{empty}

\bigskip
\section{Introduction}
\setcounter{equation}{0}

Let $\BF_q$ be a finite field with $q$ elements, let $C$ be a smooth projective geometrically irreducible curve over $\BF_q$, and let $\FG$ be a flat affine group scheme of finite type over $C$. A \emph{global $\FG$-shtuka} $\ul\CG$ over an $\BF_q$-scheme $S$ is a tuple $(\CG,\charsect_1,\ldots,\charsect_n,\tauGlob)$ consisting of a $\FG$-torsor $\CG$ over $C_S:=C\times_{\BF_q}S$, an $n$-tuple of (characteristic) sections $(\charsect_1,\ldots,\charsect_n)\in C^n(S)$ and a Frobenius connection $\tauGlob$ defined outside the graphs of the sections $\charsect_i$, that is, an isomorphism $\tauGlob\colon\s \CG|_{C_S\setminus \cup_i \Gamma_{\charsect_i}}\isoto \CG|_{C_S\setminus \cup_i \Gamma_{\charsect_i}}$ where $\s=(\id_C \times \Frob_{q,S})^\ast$. 

In \cite{AH_Unif} we will show that the moduli stack $\nabla_n\scrH^1(C,\FG)$ of global $\FG$-shtukas, after imposing suitable boundedness conditions and level structures, is an algebraic Deligne--Mumford stack over $C^n$. 
One can hope that $\nabla_n\scrH^1(C,\FG)$ may play the same role that Shimura varieties play for number fields. More specifically one can hope that the Langlands correspondence for function fields is realized on its cohomology. Note that in particular our moduli stack generalizes the space $FSh_{D,r}$ of $F$-sheaves (also called ``Drinfeld-shtukas'') which was considered by Drinfeld~\cite{Drinfeld1} and Laurent Lafforgue~\cite{Laff} in their proof of the Langlands correspondence for $\FG=\GL_2$ (resp.\ $\FG=\GL_r$), and which in turn was generalized by Varshavsky's~\cite{Var} moduli stacks $FBun$ to the case where $\FG$ is a constant split reductive group. Varshavsky's moduli stack and our generalization are used by Vincent Lafforgue~\cite{Lafforgue12} to prove Langlands parameterization over function fields. Strictly speaking Drinfeld and L.~Lafforgue did not use the language of $\GL_r$-torsors but rather the equivalent one of locally free sheaves. Our space $\nabla_n\scrH^1(C,\FG)$ likewise generalizes the moduli stacks $Cht_{\ul\lambda}$ of Ng\^o and Ng\^o Dac~\cite{NgoNgo} who explain a simple method to count $\FG$-shtukas over finite fields, the stacks $\CE\ell\ell_{C,\mathscr{D},I}$ of Laumon, Rapoport and Stuhler~\cite{LRS} who used them to prove the local Langlands correspondence for $\GL_r$, and the stacks $\AbSh^{r,d}_H$ of the second author~\cite{Har1}; see \cite[\RemDrinfeldVarshavsky]{AH_Unif} for a detailed comparison between these moduli stacks.

In \cite{AH_Unif} we also prove that $\nabla_n\scrH^1(C,\FG)$ has a Rapoport--Zink uniformization by Rapoport--Zink spaces for local $\BP$-shtukas. More precisely, let $A_\nu\cong\BF_\nu\dbl\zeta\dbr$ be the completion of the local ring $\CO_{C,\nu}$ at a closed point $\nu\in C$, let $Q_\nu$ be its fraction field, and consider the group schemes $\BP=\BP_\nu:=\FG\times_C \Spec A_\nu$ and $\genericG_\nu=\FG\times_C\Spec Q_\nu$. Let $\Nilp_{A_\nu}$ denote the category of $A_\nu$-schemes on which the uniformizer $\zeta$ of $A_\nu$ is locally nilpotent. A \emph{local $\BP_\nu$-shtuka} over a scheme $S\in\Nilp_{A_\nu}$ is a pair $\ul \CL = (\CL_+,\tauLoc)$ consisting of an $L^+\BP_\nu$-torsor $\CL_+$ on $S$ and an isomorphism of the $L\genericG_\nu$-torsors $\tauLoc\colon  \hat{\sigma}^\ast \CL \isoto\CL$. Here $L\genericG_\nu$ (resp.\ $L^+\BP_\nu$) denotes the group of loops (resp.\ positive loops) of $\BP_\nu$ (see Section~\ref{LoopsAndSht}), $\CL$ denotes the $L\genericG_\nu$-torsor associated with $\CL_+$ and $\hat{\sigma}^\ast \CL$ the pullback of $\CL$ under the absolute $\BF_\nu$-Frobenius endomorphism $\Frob_{(\#\BF_\nu),S} \colon  S \to S$. Building on earlier work of Anderson~\cite{Anderson2}, Drinfeld~\cite{Drinfeld76}, Genestier~\cite{Genestier}, Laumon~\cite{Laumon}, Rosen~\cite{Rosen} and Taguchi~\cite{Taguchi93}, local $\GL_r$-shtukas were studied by the second author in \cite{HartlPSp} as function field analogs of $p$-divisible groups and $F$-crystals. Local $\BP_\nu$-shtukas, which can be viewed as function field analogs of $p$-divisible groups with extra structure by the group scheme $\BP_\nu$, were introduced by Viehmann and the second author in \cite{H-V,HV2} in the case where $\BP_\nu$ is a constant split reductive group. Our definition is a generalization to flat affine group schemes $\BP_\nu$ of finite type.

\newcommand{\SSS}{S}
\newcommand{\TTT}{T}
As a preparation to \cite{AH_Unif} we show in this article that for $\BP_\nu$ smooth over $A_\nu$ and for a fixed local $\BP_\nu$-shtuka $\ul\BL_0$ over a field $k$, the \emph{unbounded Rapoport--Zink functor} 
\begin{eqnarray*}
\ul{\CM}_{\ul{\BL}_0}\colon (\Nilp_{k\dbl\zeta\dbr})^o & \longto &  \Sets\\
\SSS &\longmapsto & \big\{\text{Isomorphism classes of }(\ul{\CL},\bar{\delta})\colon\;\text{where }\ul{\CL}~\text{is a local $\BP_\nu$-shtuka}\\ 
&&\text{\quad over $\SSS$ and }\bar{\delta}\colon  \ul{\CL}_{\bar{\SSS}}\to \ul{\BL}_{0,\bar{\SSS}}~\text{is a quasi-isogeny  over $\bar{\SSS}$}\big\},
\end{eqnarray*}
where $\bar\SSS=\Var(\zeta)\subset S$, is representable by an ind-scheme, ind-quasi-projective over $\Spf k\dbl\zeta\dbr$; see Theorem~\ref{ThmModuliSpX}. More precisely, if the $L^+\BP_\nu$-torsor underlying $\ul\BL_0$ is trivial then $\ul\CM_{\ul\BL_0}\cong\SpaceFl_{\BP_\nu}\whtimes_{\BF_\nu}\Spf\BF_\nu\dbl\zeta\dbr$, where $\SpaceFl_{\BP_\nu}$ is the affine flag variety of $\BP_\nu$; see Remark~\ref{Flagisquasiproj}. To obtain a formal scheme locally formally of finite type, as in the analog for $p$-divisible groups, one has to assume that $\BP_\nu$ has connected reductive generic fiber, and one has to \emph{bound the Hodge polygon}, that is the relative position of $\hat\sigma^*\CL_+$ and $\CL_+$ under $\tauLoc$. We give an axiomatic treatment of bounds in Section~\ref{BC} and prove the representability of the bounded Rapoport--Zink functor by a formal scheme locally formally of finite type over $\Spf k\dbl\zeta\dbr$ in Theorem~\ref{ThmRRZSp}. Our proof, which generalizes \cite[Theorem~6.3]{H-V}, is inspired by Rapoport's and Zink's original result \cite[Theorem~2.16]{RZ} for $p$-divisible groups.

In addition, in Chapter~\ref{GalRepSht} we discuss the relation between local $\BP$-shtukas and Galois representations which is given by the associated Tate module. This chapter is largely independent of the rest of this article and is only used in Remark~\ref{RemTateF}. In Chapter~\ref{UnboundedUnif} we consider the formal stack $\nabla_n\scrH^1(C,\FG)^{\ul \nu}$, which is obtained by taking the formal completion of the stack $\nabla_n\scrH^1(C,\FG)$ at a fixed $n$-tuple of pairwise different characteristic places $\ul \nu=(\nu_1,\ldots,\nu_n)$. This means we let $A_{\ul\nu}$ be the completion of the local ring $\CO_{C^n,\ul\nu}$, and we consider global $\FG$-shtukas only over schemes $S$ whose characteristic morphism $S\to C^n$ factors through $\Nilp_{A_\ul\nu}$.
Recall that with an abelian variety over a scheme in $\Nilp_{\BZ_p}$ one can associate its $p$-divisible group. In the analogous situation for global $\FG$-shtukas one can associate a tuple $(\wh\Gamma_{\nu_i}(\ul\CG))_i$ of local $\BP_{\nu_i}$-shtukas $\wh\Gamma_{\nu_i}(\ul\CG)$ with a global $\FG$-shtuka $\ul\CG$ in $\nabla_n\scrH^1(C,\FG)^{\ul \nu}(S)$. We construct this \emph{global-local functor} in Section~\ref{SectGlobalLocalFunctor} by first generalizing the glueing lemma of Beauville and Laszlo \cite{B-L} in Lemma~\ref{LemmaBL}. In analogy with a theorem of Serre and Tate relating the deformation theory of abelian varieties over schemes in $\Nilp_{\BZ_p}$ and their associated $p$-divisible groups, we prove in Theorem~\ref{Serre-Tate} the equivalence between the infinitesimal deformations of a global $\FG$-shtuka and the infinitesimal deformations of its associated $n$-tuple of local $\BP_{\nu_i}$-shtukas. Note that unlike abelian varieties, $\FG$-shtukas posses more than one characteristic and we must keep track of the deformations of the local $\BP_{\nu_i}$-shtukas at each of these characteristic places $\nu_i$. This theorem for abelian $\tau$-sheaves (corresponding to the case $\FG=\GL_r$) and their associated $z$-divisible groups was first stated and proved by the second author in \cite{Har1}.

\medskip

{\it Acknowledgements.} 
We would like to thank E.~Viehmann and L.~Kramer for helpful discussions and the anonymous referee for his careful reading and valuable remarks.

\subsection{Notation and Conventions}\label{Notation and Conventions}
Throughout this article we denote by
\begin{tabbing}
$\genericG_\nu:=\FG\times_C\Spec Q_\nu,$\; \=\kill
$\BF_q$\> a finite field with $q$ elements and characteristic $p$,\\[1mm]
$C$\> a smooth projective geometrically irreducible curve over $\BF_q$,\\[1mm]
$Q:=\BF_q(C)$\> the function field of $C$,\\[1mm]
$\nu$\> a closed point of $C$, also called a \emph{place} of $C$,\\[1mm]
$\BF_\nu$\> the residue field at the place $\nu$ on $C$,\\[1mm]
$A_\nu$\> the completion of the stalk $\CO_{C,\nu}$ at  $\nu$,\\[1mm]
$Q_\nu:=\Quot(A_\nu)$\> its fraction field,\\[1mm]
$\BaseOfD$ \> a finite field containing $\BF_q$,\\[1mm]

$\BD_R:=\Spec R\dbl z \dbr$ \> \parbox[t]{0.775\textwidth}{the spectrum of the ring of formal power series in $z$ with coefficients in an $\BaseOfD$-algebra $R$,}\\[1mm]
$\hat{\BD}_R:=\Spf R\dbl z \dbr$ \> the formal spectrum of $R\dbl z\dbr$ with respect to the $z$-adic topology.
\end{tabbing}
\noindent
When $R= \BaseOfD$ we drop the subscript $R$ from the notation of $\BD_R$ and $\hat{\BD}_R$.\\[-3mm]

\noindent
For a formal scheme $\wh S$ we denote by $\Nilp_{\wh S}$ the category of schemes over $\wh S$ on which an ideal of definition of $\wh S$ is locally nilpotent. We  equip $\Nilp_{\wh S}$ with the \fppf-topology. We also denote by
\begin{tabbing}
$\genericG_\nu:=\FG\times_C\Spec Q_\nu,$\; \=\kill
$n\in\BN_{>0}$\> a positive integer,\\[1mm]
$\ul \nu:=(\nu_i)_{i=1\ldots n}$\> an $n$-tuple of closed points of $C$,\\[1mm]
$A_{\ul\nu}$\> the completion of the local ring $\CO_{C^n,\ul\nu}$ of $C^n$ at the closed point $\ul\nu=(\nu_i)$,\\[1mm]
$\Nilp_{A_{\ul\nu}}:=\Nilp_{\Spf A_{\ul\nu}}$\> \parbox[t]{0.775\textwidth}{the category of schemes over $C^n$ on which the ideal defining the closed point $\ul\nu\in C^n$ is locally nilpotent,}\\[2mm]
$\Nilp_{\BaseOfD\dbl\zeta\dbr}:=\Nilp_{\hat\BD}$\> \parbox[t]{0.775\textwidth}{the category of $\BD$-schemes $S$ for which the image of $z$ in $\CO_S$ is locally nilpotent. We denote the image of $z$ by $\zeta$ since we need to distinguish it from $z\in\CO_\BD$.}\\[2mm]
$\FG$\> a flat affine group scheme of finite type over $C$, \\[1mm]
$\BP_\nu:=\FG\times_C\Spec A_\nu$ \> the base change of $\FG$ to $\Spec A_\nu$,\\[1mm]
$\genericG_\nu:=\FG\times_C\Spec Q_\nu$ \> the generic fiber of $\BP_\nu$ over $\Spec Q_\nu$,\\[1mm]
$\BP$\> a flat affine group scheme of finite type over $\BD=\Spec\BaseOfD\dbl z\dbr$,\\[1mm] 
$\genericG:=\BP\times_{\BD}\Spec\BaseOfD\dpl z\dpr$\> the generic fiber of $\BP$ over $\Spec\BaseOfD\dpl z\dpr$.
\end{tabbing}

\noindent
Let $S$ be an $\BF_q$-scheme. We denote by $\sigma_S \colon  S \to S$ its $\BF_q$-Frobenius endomorphism which acts as the identity on the points of $S$ and as the $q$-power map on the structure sheaf. Likewise we let $\hat{\sigma}_S\colon S\to S$ be the $\BaseOfD$-Frobenius endomorphism of an $\BaseOfD$-scheme $S$. We set
\begin{tabbing}
$\genericG_\nu:=\FG\times_C\Spec Q_\nu,$\; \=\kill
$C_S := C \times_{\Spec\BF_q} S$, \> and \\[1mm]
$\sigma := \id_C \times \sigma_S$.
\end{tabbing}

Let $H$ be a sheaf of groups (for the \fppf-topology) on a scheme $X$. In this article a (\emph{right}) \emph{$H$-torsor} (also called an \emph{$H$-bundle}) on $X$ is a sheaf $\CG$ for the \fppf-topology on $X$ together with a (right) action of the sheaf $H$ such that $\CG$ is isomorphic to $H$ on an \fppf-covering of $X$. Here $H$ is viewed as an $H$-torsor by right multiplication.

%%%%%%%%%%%%%%%%%%%%%%%%%%%%%%%%%%%%%%%%%%%%%%%%%%%%%%%%%%%%%%%%%%%%%%
%
%  Global and Local Shtukas
%
%%%%%%%%%%%%%%%%%%%%%%%%%%%%%%%%%%%%%%%%%%%%%%%%%%%%%%%%%%%%%%%%%%%%%%

\section{Local \texorpdfstring{$\BP$}{P}-Shtukas and Global \texorpdfstring{$\FG$}{G}-Shtukas}\label{Shtukas}
\setcounter{equation}{0}

Global $\FG$-shtukas are function field analogs of abelian varieties. They were introduced by Drinfeld~\cite{Drinfeld1} in the case where $\FG=\GL_r$ and used by him and by L.~Lafforgue~\cite{Laff} to establish the Langlands correspondence for $\GL_r$ over global function fields. As we mentioned in the introduction, they were generalized by Laumon, Rapoport and Stuhler~\cite{LRS}, Varshavsky~\cite{Var}, Ng\^o and Ng\^o Dac~\cite{NgoNgo}, and in \cite{Har1}. We further generalize all these variants in Definition~\ref{Global Sht}. Varshavsky's and our generalization are used by Vincent Lafforgue~\cite{Lafforgue12} to prove Langlands parameterization over function fields. The local $p$-adic properties of abelian varieties are largely captured by their associated $p$-divisible groups. In the theory of global $\FG$-shtukas the latter correspond to local shtukas; see \cite[Chapter~3]{HartlDict}.

\subsection{Loop Groups and Local \texorpdfstring{$\BP$}{P}-Shtukas}\label{LoopsAndSht}

Since we want to develop the theory of local $\BP$-shtukas partly independently of global $\FG$-shtukas we let $\BaseOfD$ be a finite field and $\BaseOfD\dbl z\dbr$ be the power series ring over $\BaseOfD$ in the variable $z$. We let $\BP$ be a flat affine group scheme of finite type over $\BD:=\Spec\BaseOfD\dbl z\dbr$, and we let $\genericG:=\BP\times_\BD\dot{\BD}$ be the generic fiber of $\BP$ over $\dot\BD:=\Spec\BaseOfD\dpl z\dpr$. We are mainly interested in the situation where we have an isomorphism $\BD\cong\Spec A_\nu$ for a place $\nu$ of $C$ and where $\BP=\BP_\nu:=\FG\times_C\Spec A_\nu$. We recall the following

\begin{definition}\label{DefLoopGps}
The \emph{group of positive loops associated with $\BP$} is the infinite dimensional affine group scheme $L^+\BP$ over $\BaseOfD$ whose $R$-valued points for an $\BaseOfD$-algebra $R$ are 
\[
L^+\BP(R):=\BP(R\dbl z\dbr):=\BP(\BD_R):=\Hom_\BD(\BD_R,\BP)\,.
\]
The \emph{group of loops associated with $\genericG$} is the $\fpqc$-sheaf of groups $L\genericG$ over $\BaseOfD$ whose $R$-valued points for an $\BaseOfD$-algebra $R$ are 
\[
L\genericG(R):=\genericG(R\dpl z\dpr):=\genericG(\dot{\BD}_R):=\Hom_{\dot\BD}(\dot\BD_R,\genericG)\,,
\]
where we write $R\dpl z\dpr:=R\dbl z \dbr[\frac{1}{z}]$ and $\dot{\BD}_R:=\Spec R\dpl z\dpr$. It is representable by an ind-scheme of ind-finite type over $\BaseOfD$; see \cite[\S\,1.a]{PR2}, or \cite[\S4.5]{B-D}, \cite{Ngo-Polo}, \cite{Faltings03} when $\BP$ is constant. 
Let $\scrH^1(\Spec \BaseOfD,L^+\BP)\,:=\,[\Spec \BaseOfD/L^+\BP]$ (respectively $\scrH^1(\Spec \BaseOfD,L\genericG)\,:=\,[\Spec \BaseOfD/L\genericG]$) denote the classifying space of $L^+\BP$-torsors (respectively $L\genericG$-torsors). It is a stack fibered in groupoids over the category of $\BaseOfD$-schemes $S$ whose category $\scrH^1(\Spec \BaseOfD,L^+\BP)(S)$ consists of all $L^+\BP$-torsors (resp.\ $L\genericG$-torsors) on $S$. The inclusion of sheaves $L^+\BP\subset L\genericG$ gives rise to the natural 1-morphism 
\begin{equation}\label{EqLoopTorsor}
L\colon\scrH^1(\Spec \BaseOfD,L^+\BP)\longto \scrH^1(\Spec \BaseOfD,L\genericG),~\CL_+\mapsto \CL\,.
\end{equation}
\end{definition}

\begin{definition}\label{formal torsor def}
Let $\hat{\BP}$ be the formal group scheme over $\hat{\BD}:=\Spf\BaseOfD\dbl z\dbr$, obtained by the formal completion of $\BP$ along $V(z)$. A \emph{formal $\hat{\BP}$-torsor} over an $\BaseOfD$-scheme $S$ is a $z$-adic formal scheme $\hat{\CP}$ over $\hat{\BD}_{S}:=\hat{\BD}\whtimes_{\BaseOfD} S$ together with an action
$\hat{\BP}\whtimes_{\hat{\BD}}\hat{\CP}\rightarrow\hat{\CP}$ of $\hat{\BP}$ on $\hat{\CP}$ such that there is a covering $\hat{\BD}_{S'} \rightarrow \hat{\BD}_{S}$ where ${S'\rightarrow S}$ is an \fpqc-covering
and a $\hat{\BP}$-equivariant isomorphism 
$$
\hat{\CP} \whtimes_{\hat{\BD}_{S}} \hat{\BD}_{S'}\isoto\hat{\BP} \whtimes_{\hat{\BD}}\hat{\BD}_{S'}.
$$ 
Here $\hat{\BP}$ acts on itself by right multiplication.
Let $\scrH^1(\hat{\BD},\hat{\BP})$ be the category fibered in groupoids that assigns to each $\BaseOfD$-scheme $S$ the groupoid consisting of all formal $\hat{\BP}$-torsors over $\hat{\BD}_S$.
\end{definition}

\begin{remark}\label{B-L remark 1}
If $\BP$ is smooth over $\BD$ then for any $\wh \CP$ in $\scrH^1(\hat{\BD},\hat{\BP})(\Spec R)$ one can find an \'etale covering $R\rightarrow R'$ such that $\wh \CP\whtimes_{\hat\BD_R}\hat\BD_{R'}$ is isomorphic to $\hat\BP_{R'}$ in $\scrH^1(\hat{\BD},\hat{\BP})(R')$. Indeed, since $\wh\CP\to\hat\BD_R$ is smooth, the restriction $\wh\CP_0$ of $\wh \CP$ to $V(z) \subseteq \hat{\BD}_R$ is likewise smooth over $R$. Therefore $\wh\CP_0$ has a section over an \'etale covering $R\to R'$. Then by smoothness this section extends over $\hat{\BD}_R$.
\end{remark}

In \cite[Proposition~2.2.(a)]{H-V} Viehmann and the second author proved that for a split reductive group $G$, there is a bijection of (pointed) sets between the \v{C}ech cohomology $\CKoh^1(S_\fpqc,L^+G)$ and the set of isomorphism classes of $z$-adic formal schemes over $\hat{\BD}_S$. By the same arguments one can even see that there is a canonical equivalence between the corresponding categories.
\begin{proposition}\label{formal torsor prop}
There is a natural isomorphism 
$$
\scrH^1(\hat{\BD},\hat{\BP}) \isoto \scrH^1(\Spec \BaseOfD,L^+\BP)
$$
of groupoids. In particular, if $\BP$ is smooth over $\BD$ then all $L^+\BP$-torsors for the $\fpqc$-topology on $S$ are already trivial \'etale locally on $S$. 
\end{proposition}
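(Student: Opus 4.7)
The plan is to construct functors in both directions and check they are mutually quasi-inverse, reducing everything to the trivial case via étale-local trivializations.

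First I would define a functor
$$\Phi\colon \scrH^1(\hat{\BD},\hat{\BP})\longto\scrH^1(\Spec\BaseOfD,L^+\BP)$$
by sending a formal $\hat{\BP}$-torsor $\hat{\CP}$ over $\hat{\BD}_S$ to the étale sheaf $\Phi(\hat\CP)$ on $S$ whose sections over an étale $T\to S$ are the $\hat{\BD}_T$-morphisms $\hat{\BD}_T\to \hat{\CP}\whtimes_{\hat\BD_S}\hat\BD_T$. The tautological identification $L^+\BP(T)=\BP(T\dbl z\dbr)=\Hom_{\hat{\BD}_T}(\hat{\BD}_T,\hat{\BP}\whtimes_{\hat\BD}\hat\BD_T)$ makes $\Phi(\hat\CP)$ a right $L^+\BP$-module, and Remark~\ref{B-L remark 1} guarantees étale-local triviality, so $\Phi(\hat\CP)$ is in fact an $L^+\BP$-torsor for the étale topology. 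Functoriality in $\hat\CP$ is clear.

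Next I would construct an inverse functor $\Psi$ via descent. Given an $L^+\BP$-torsor $\CL_+$ over $S$, pick an fpqc cover $S'\to S$ trivializing $\CL_+$ and let $g\in L^+\BP(S'')=\BP(S''\dbl z\dbr)$ be the resulting cocycle on $S'':=S'\times_S S'$. Interpret $g$ as a $\hat{\BP}$-equivariant automorphism of the trivial formal torsor $\hat{\BP}\whtimes_{\hat\BD}\hat\BD_{S''}$, and glue the trivial formal torsors $\hat{\BP}\whtimes_{\hat\BD}\hat\BD_{S'}$ via $g$. Effective descent along the affine morphism $\hat\BD_{S'}\to\hat\BD_S$ produces a formal $\hat{\BP}$-torsor $\Psi(\CL_+)$ over $\hat{\BD}_S$; independence of the chosen trivialization is routine.

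Finally I would verify $\Phi\circ\Psi\simeq\id$ and $\Psi\circ\Phi\simeq\id$. Since both constructions are compatible with pullback along $S'\to S$, and both sides are stacks for the étale (and even fpqc) topology, the check reduces to the trivial case, where on the one hand $\Phi(\hat\BP\whtimes_{\hat\BD}\hat\BD_S)=L^+\BP\times_{\BaseOfD}S$ by definition, and on the other hand $\Psi$ tautologically inverts this. This also proves the last assertion: if $\CL_+$ is an $L^+\BP$-torsor merely for the fpqc topology, then $\Psi(\CL_+)$ is a formal $\hat{\BP}$-torsor, hence by Remark~\ref{B-L remark 1} admits an étale-local trivialization, and transporting this trivialization through $\Phi$ yields an étale-local trivialization of $\CL_+\simeq\Phi(\Psi(\CL_+))$.

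The only delicate point is the descent step in the construction of $\Psi$: one must know that formal $\hat{\BP}$-torsors effectively descend along fpqc covers of $S$, which in turn rests on effective descent for affine morphisms applied levelwise to the $z$-adic system defining $\hat{\BP}$, together with the smoothness of $\hat{\BP}$ that allows one to pass to étale covers via Remark~\ref{B-L remark 1} before invoking descent.
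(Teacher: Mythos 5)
Your proof takes essentially the same route as the paper's: the forward functor is the sheaf of sections $T\mapsto\Hom_{\hat\BD_S}(\hat\BD_T,\hat\CP)$, and the inverse is constructed by viewing the cocycle $g\in L^+\BP(S'')$ as a descent datum and descending the trivial formal torsor levelwise modulo $z^n$ via effective descent for affine schemes along the fpqc cover, then passing to the $z$-adic limit. One small correction to your closing remark: the descent step goes through directly on the fpqc cover (levelwise effective descent for affine schemes works fpqc-locally), with no need to first pass to an \'etale cover; the smoothness of $\hat\BP$ and Remark~\ref{B-L remark 1} are used only for the final assertion about \'etale-local triviality.
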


\begin{proof}
With a given element $\hat{\CP}$ of $\scrH^1(\hat{\BD},\hat{\BP})(S)$ one can associate the following sheaf 
\begin{eqnarray*}
\ul{\cK}\colon\es  S_\fpqc & \longto & \Sets\\
T & \longmapsto & \Hom_{\hat{\BD}_S}(\hat{\BD}_T,\hat{\CP}),
\end{eqnarray*}
where $S_\fpqc$ denotes the big $\fpqc$-site on $S$. This sheaf is a torsor under the action of $L^+\BP(T)=\Hom_{\hat{\BD}}(\hat{\BD}_T,\hat{\BP})$.

Conversely let $\CK$ be an $L^+\BP$-torsor. Let $S'\to S$ be an $\fpqc$-covering that trivializes $\CK$ and fix a trivialization $\CK_{S'}\isoto (L^+\BP)_{S'}$. This gives a 1-cocycle $g\in L^+\BP(S'')$, where $S''=S'\times_S S'$. Now $\bar{g}=g (mod~z^n)$ can be viewed as a descent data on $\hat{\BP} \whtimes_{\BD} \BD_{n,S'}=\BP\times_{\BD}{\BD}_{n,S'}$ where $\BD_{n,S'}:=\Spec\BaseOfD\dbl z\dbr/(z^n)\times_\BaseOfD S'$. Since $\BD_{n,S'} \to \BD_{n,S}$ is an $\fpqc$-covering and $\BP$ is affine, the descent data is effective by \cite[\S\,6.1, Theorem~6]{BLR} and gives an affine finitely presented scheme $\hat{\CG}_n$ over $\BD_{n,S}$ by \cite[IV$_2$, Proposition~2.7.1]{EGA}, which is moreover smooth by \cite[IV$_4$, Corollaire 17.7.3]{EGA} if $\BP$ is smooth over $\BD$. These schemes form an inductive system $\{\hat{\CG}_n\}_{n\in \BN}$. Now set $\hat{\cG}:=\dirlim \hat{\CG}_n$, the existence of this limit (in the category of $z$-adic formal schemes over $\hat{\BD}_S$) follows from \cite[I$_{\rm new}$, Corollary~10.6.4]{EGA}. This shows that the functor is essentially surjective. By the above construction we see that the functor is also fully faithful.

The last statement now follows from Remark~\ref{B-L remark 1}.
\end{proof}

\begin{definition}\label{ker}
Assume that we have two morphisms $f,g\colon X\to Y$ of schemes or stacks. We denote by $\equi(f,g\colon X\rightrightarrows Y)$ the pull back of the diagonal under the morphism $(f,g)\colon X\to Y\times_\BZ Y$, that is, we let 
\[
\equi(f,g\colon X\rightrightarrows Y)\,:=\,X\times_{(f,g),Y\times Y,\Delta}Y
\]
where $\Delta=\Delta_{Y/\BZ}\colon Y\to Y\times_\BZ Y$ is the diagonal morphism.
\end{definition}

Generalizing \cite[Definition~3.1]{H-V} we define the space of local $\BP$-shtukas as follows.

\begin{definition}\label{localSht}
Let $\CX$ be the fiber product 
$$
\scrH^1(\Spec \BaseOfD,L^+\BP)\times_{\scrH^1(\Spec \BaseOfD,L\genericG)} \scrH^1(\Spec \BaseOfD,L^+\BP)
$$ 
of groupoids. Let $pr_i$ denote the projection onto the $i$-th factor. We define the groupoid of \emph{local $\BP$-shtukas} $\Sht_{\BP}^{\BD}$ to be 
$$
\Sht_{\BP}^{\BD}\;:=\;\equi\left(\hat{\sigma}\circ pr_1,pr_2\colon \CX \rightrightarrows \scrH^1(\Spec \BaseOfD,L^+\BP)\right)\whtimes_{\Spec\BaseOfD}\Spf\BaseOfD\dbl\zeta\dbr.
$$
(see Definition~\ref{ker}) where $\hat{\sigma}:=\hat{\sigma}_{\scrH^1(\Spec \BaseOfD,L^+\BP)}$ is the absolute $\BaseOfD$-Frobenius of $\scrH^1(\Spec \BaseOfD,L^+\BP)$. The category $\Sht_{\BP}^{\BD}$ is fibered in groupoids over the category $\Nilp_{\BaseOfD\dbl\zeta\dbr}$ of $\BaseOfD\dbl\zeta\dbr$-schemes on which $\zeta$ is locally nilpotent. We call an object of the category $\Sht_{\BP}^{\BD}(S)$ a \emph{local $\BP$-shtuka over $S$}. 

More explicitly a local $\BP$-shtuka over $S\in \Nilp_{\BaseOfD\dbl\zeta\dbr}$ is a pair $\ul \CL = (\CL_+,\tauLoc)$ consisting of an $L^+\BP$-torsor $\CL_+$ on $S$ and an isomorphism of the associated loop group torsors $\tauLoc\colon  \hat{\sigma}^\ast \CL \to\CL$ from \eqref{EqLoopTorsor}. 
\end{definition}

\begin{definition}\label{etallocalSht}
A local $\BP$-shtuka $(\CL_+,\tauLoc)$ is called \emph{\'etale} if $\tauLoc$ comes from an isomorphism of $L^+\BP$-torsors $\hat{\sigma}^\ast\CL_+\isoto \CL_+$. We denote by $\EtSht_\BP^\BD(S)$ the category of \'etale local $\BP$-shtukas over $S$.
\end{definition}

\begin{lemma}\label{LSisnonemptyA}
Let $\AlgClFld$ be a separably closed field extension of $\BaseOfD$. If $\BP$ is smooth over $\BD$ with connected special fiber, then for any $b\in L^+\BP(\AlgClFld)$ there exists some $c\in L^+\BP(\AlgClFld)$ such that $b\,\hat\sigma^*(c)=c$.
\end{lemma}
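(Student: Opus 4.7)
The plan is to solve the equation $b = c\cdot\hat\sigma^*(c)^{-1}$ by successive approximation modulo $z^n$. Write $L^+\BP = \invlim_n L^+_n\BP$, where $L^+_n\BP$ is the smooth affine $\BaseOfD$-group of finite type representing $R\mapsto\BP(R\dbl z\dbr/(z^n))$. It suffices to construct a compatible system of $c_n\in L^+_n\BP(k)$ with $c_n\hat\sigma^*(c_n)^{-1} = b_n$; the limit then furnishes the desired $c\in L^+\BP(k)$, using that the transition maps $L^+_{n+1}\BP\to L^+_n\BP$ are smooth and surjective because $\BP$ is smooth over $\BD$.

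For the base case $n=1$, I will apply Lang's theorem to the smooth affine $\BaseOfD$-group $L^+_1\BP = \BP\times_{\BaseOfD\dbl z\dbr}\BaseOfD$, base-changed to the algebraically closed field $k$. Provided this group is geometrically connected (which is the case in the parahoric Bruhat--Tits setting of this paper), the Lang map $c\mapsto c\hat\sigma^*(c)^{-1}$ surjects onto $L^+_1\BP(k)$, producing $c_1$ with $c_1\hat\sigma^*(c_1)^{-1} = b_1$.

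For the inductive step, given $c_n$ I pick any lift $\tilde c_{n+1}\in L^+_{n+1}\BP(k)$ of $c_n$, which exists by smoothness. The element $\tilde c_{n+1}^{-1}\cdot b\cdot\hat\sigma^*(\tilde c_{n+1})$ reduces to the identity modulo $z^n$ and therefore lies in the truncation kernel $U_n := \ker\bigl(L^+_{n+1}\BP(k)\to L^+_n\BP(k)\bigr)$. This kernel is canonically the abelian vector group $\Lie(\BP)\otimes_\BaseOfD k$, on which $\hat\sigma^*$ acts as the $\#\BaseOfD$-power Frobenius on coefficients. Writing $c_{n+1} = \tilde c_{n+1}\cdot\epsilon$ for $\epsilon\in U_n$ reduces the lifting problem to an Artin--Schreier-type linear equation $\epsilon - \hat\sigma^*(\epsilon) = u$ in $\Lie(\BP)\otimes_\BaseOfD k$, which is solvable over the algebraically closed field $k$ by Lang's theorem applied to $\BG_a^{\dim\BP}$.

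The main obstacle is the base case: it depends essentially on Lang--Steinberg, which genuinely requires geometric connectedness of $L^+_1\BP$ (the statement would fail, for example, for a disconnected group like $\BZ/2$ with trivial Frobenius action on $\pi_0$). Once this connectedness input is secured, the inductive lifting is routine thanks to the abelianness of the successive truncation kernels and the standard solvability of Artin--Schreier equations over an algebraically closed field.
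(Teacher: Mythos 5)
Your proof is correct, but it takes a genuinely different route from the one in the paper. Both arguments ultimately reduce to Lang's theorem on finite-dimensional truncations of $L^+\BP$, but the strategies diverge. The paper applies Lang's theorem at \emph{every} level $n$ to the Weil restriction $\wt G_n := \Res_{\BD_n/\Spec\BaseOfD}(\BP_n)$, obtaining solutions $c_n$ that need not be compatible; it then repairs compatibility by observing that the discrepancy $\bar d_n := \alpha_n(c_{n+1})^{-1}c_n$ (with $\alpha_n$ the reduction map) is $\hat\sigma^*$-invariant, hence lies in $\wt G_n(\BaseOfD)$, and can therefore be lifted to $\wt G_{n+1}(\BaseOfD)$ by smoothness of $\BP$; right-multiplying $c_{n+1}$ by this lift aligns it over $c_n$. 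You instead apply Lang only at the first level and then lift inductively, observing that the obstruction at each step lands in the abelian truncation kernel $U_n \cong \Lie(\BP_0)\otimes_\BaseOfD k$, where the twisted equation linearizes to an Artin--Schreier equation solvable over algebraically closed $k$. What the paper's approach buys is that one never needs to analyze the structure of the truncation kernels (though one does pay for a separate compatibility-fixing step); what your approach buys is that after the base case everything is linear, at the cost of identifying $U_n$ with the Lie algebra and tracking the Frobenius action on it. Your remark that the base case requires connectedness of $L^+_1\BP$ is well taken: the paper's proof has exactly the same dependence (Lang's theorem applied to $\wt G_n$ requires $\wt G_n$ connected, which holds iff the special fiber of $\BP$ is connected), even though the standing hypothesis of Section 2.1 only says ``smooth affine''; connectedness of the fibers is what the intended parahoric setting supplies, and you are right to flag it.
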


\begin{proof}
Let $\hat{\BP}$ be as in Definition~\ref{formal torsor def}. Then $L^+\BP(\AlgClFld)=\hat\BP(\AlgClFld\dbl z\dbr)$. We view $\hat{\BP}$ as the inductive limit $\dirlim\BP_n$, where $\BP_n=\BP\times_\BD \BD_n$ with $\BD_n:=\Spec\BaseOfD\dbl z\dbr/(z^n)$. Let $\wt{G}_n$ denote the linear algebraic group over $\BaseOfD$ given by the Weil restriction $\Res_{\BD_n/\Spec \BaseOfD}(\BP_n)$. The reduction of $b$ mod $z^n$ gives an element $b_n\in \wt{G}_n (\AlgClFld)$.
Since $\BP$ is smooth with connected special fiber, $\wt{G}_n$ is connected by \cite[Proposition~A.5.9]{CGP}. Thus by Lang's theorem \cite[Corollary on p.~557]{Lang} there exists a $c_n\in\wt{G}_n(\AlgClFld)$ such that $b_n \hat{\sigma}^\ast(c_n)=c_n$. Here $\hat{\sigma}$ is the $\BaseOfD$-Frobenius on $\wt{G}_n$ which coincides with the Frobenius $\hat\sigma$ induced from $\BP$. Now consider the reduction map $\alpha_n\colon  \wt{G}_{n+1}(\AlgClFld)\to \wt{G}_n(\AlgClFld)$ and the element $\bar{d}_n:=\alpha_n(c_{n+1})^{-1}c_n $ which satisfies $\hat{\sigma}^\ast(\bar{d}_n)=\bar{d}_n$ and hence lies in $\wt{G}_n(\BaseOfD)=\BP(\BD_{n+1})$. Since $\BP$ is smooth $\bar{d}_n$ lifts to an element $d_n\in \BP(\BD_{n+1})=\wt{G}_{n+1}(\BaseOfD)$. Replacing $c_{n+1}$ by $c_{n+1}d_n$ we may assume that $\alpha_{n}(c_{n+1})=c_n$ and then take $c:=\invlim c_n$.  
\end{proof}

\begin{corollary}\label{CorEtIsTrivial}
If $\BP$ is smooth over $\BD$ with connected special fiber, then every \'etale local $\BP$-shtuka over a separably closed field $\AlgClFld$ is isomorphic to \mbox{$\bigl((L^+\BP)_\AlgClFld,1\!\cdot\!\hat\sigma^*\bigr)$}.
\end{corollary}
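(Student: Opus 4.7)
The plan is a two-step reduction: first trivialize the underlying $L^+\BP$-torsor $\CL_+$, then use Lemma~\ref{LSisnonemptyA} to modify the trivialization so that the Frobenius is trivialized as well.

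For the first step, I would appeal to Proposition~\ref{formal torsor prop}, which tells us that every $L^+\BP$-torsor is already trivial on an \'etale cover of its base. Since $\AlgClFld$ is an algebraically closed field, every connected \'etale cover of $\Spec\AlgClFld$ is $\Spec\AlgClFld$ itself, and so $\CL_+$ admits a global trivialization $\alpha\colon\CL_+\isoto(L^+\BP)_\AlgClFld$ as right $L^+\BP$-torsor.

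Next, by the \'etaleness hypothesis, $\tauLoc$ is by definition an isomorphism of $L^+\BP$-torsors $\hat\sigma^*\CL_+\isoto\CL_+$. Transported along the trivializations $\hat\sigma^*\alpha$ and $\alpha$, it becomes a right-$L^+\BP$-equivariant automorphism of the trivial torsor $(L^+\BP)_\AlgClFld$, and hence is given by left multiplication by a unique element $b\in L^+\BP(\AlgClFld)$. Thus $(\CL_+,\tauLoc)$ is isomorphic to $\bigl((L^+\BP)_\AlgClFld,\,b\cdot\hat\sigma^*\bigr)$.

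Finally, Lemma~\ref{LSisnonemptyA} supplies an element $c\in L^+\BP(\AlgClFld)$ with $b\cdot\hat\sigma^*(c)=c$. Replacing $\alpha$ by the new trivialization $c^{-1}\cdot\alpha$, a direct computation shows that the Frobenius is now given by left multiplication by $c^{-1}\cdot b\cdot\hat\sigma^*(c)=c^{-1}\cdot c=1$, producing the desired isomorphism with $\bigl((L^+\BP)_\AlgClFld,\,1\cdot\hat\sigma^*\bigr)$. The only substantive content is Lemma~\ref{LSisnonemptyA} itself, which plays the role of Lang's theorem for the pro-algebraic group $L^+\BP$; once that is available, the corollary is a one-line cocycle-change argument, so I do not anticipate any further obstacle.
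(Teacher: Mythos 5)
Your proposal is correct and follows the same two-step argument as the paper: trivialize $\CL_+$ using Proposition~\ref{formal torsor prop} to reduce to the pair $\bigl((L^+\BP)_\AlgClFld,b\cdot\hat\sigma^*\bigr)$, then apply Lemma~\ref{LSisnonemptyA} to change trivializations and normalize $b$ to $1$. The only cosmetic difference is that you phrase the final step as replacing the trivialization by $c^{-1}\alpha$, whereas the paper states directly that left multiplication by $c$ gives the isomorphism $\bigl((L^+\BP)_\AlgClFld,1\cdot\hat\sigma^*\bigr)\isoto\bigl((L^+\BP)_\AlgClFld,b\cdot\hat\sigma^*\bigr)$; these are the same computation.
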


\begin{proof}
Let $\ul\CL=(\CL_+,\tauLoc)$ be an \'etale local $\BP$-shtuka over $\AlgClFld$. By Proposition~\ref{formal torsor prop} there is a trivialization of the $L^+\BP$-torsor $\CL_+$ giving rise to an isomorphism $\ul\CL\cong\bigl((L^+\BP)_\AlgClFld,b\!\cdot\!\hat\sigma^*\bigr)$ for some $b\in L^+\BP(\AlgClFld)$. By Lemma~\ref{LSisnonemptyA} there is an element $c\in L^+\BP(\AlgClFld)$ with $b\,\hat\sigma^*(c)=c$ and multiplication with $c$ is an isomorphism $\bigl((L^+\BP)_\AlgClFld,1\!\cdot\!\hat\sigma^*\bigr)\isoto\bigl((L^+\BP)_\AlgClFld,b\!\cdot\!\hat\sigma^*\bigr)$. 
\end{proof}

Local $\BP$-shtukas can be viewed as function field analogs of $p$-divisible groups. This inspires the following notions of quasi-isogenies; compare~\cite[Definition~3.8]{H-V}.

\begin{definition}\label{quasi-isogeny L}
A \emph{quasi-isogeny} $f\colon\ul\CL\to\ul\CL'$ between two local $\BP$-shtukas $\ul{\CL}:=(\CL_+,\tauLoc)$ and $\ul{\CL}':=(\CL_+' ,\tauLoc')$ over $S$ is an isomorphism of the associated $L\genericG$-torsors $f \colon  \CL \to \CL'$ satisfying $f\circ\tauLoc=\tauLoc'\circ\hat{\sigma}^{\ast}f$. We denote by $\QIsog_S(\ul{\CL},\ul{\CL}')$ the set of quasi-isogenies between $\ul{\CL}$ and $\ul{\CL}'$ over $S$, and we write $\QIsog_S(\ul\CL):=\QIsog_S(\ul\CL,\ul\CL)$ for the quasi-isogeny group of $\ul\CL$. 
\end{definition}

As in the theory of $p$-divisible groups, also our quasi-isogenies are rigid. Here we prove the case of local $\BP$-shtukas which is analogous to $p$-divisible groups. Like for abelian varieties, the case of global $\FG$-shtukas only holds in fixed finite characteristics. We will define quasi-isogenies between global $\FG$-shtukas in Section~\ref{SectGlobalGShtukas} and prove rigidity for them in Proposition~\ref{PropGlobalRigidity}.
 
\begin{proposition}[Rigidity of quasi-isogenies for local $\BP$-shtukas] \label{PropRigidityLocal}
Let $S$ be a scheme in $\Nilp_{\BaseOfD\dbl\zeta\dbr}$ and
let $j \colon  \bar{S}\rightarrow S$ be a closed immersion defined by a sheaf of ideals $\CI$ which is locally nilpotent.
Let $\ul{\CL}$ and $\ul{\CL}'$ be two local $\BP$-shtukas over $S$. Then
$$
\QIsog_S(\ul{\CL}, \ul{\CL}') \longto \QIsog_{\bar{S}}(j^*\ul{\CL}, j^*\ul{\CL}') ,\quad f \mapsto j^*f
$$
is a bijection of sets.
\end{proposition}

\begin{proof}
This was proved in \cite[Proposition~3.9]{H-V} when $\BP=G\times_\BaseOfD\BD$ for a constant split reductive group $G$ over $\BaseOfD$. The proof carries over literally. Compare also Proposition~\ref{PropGlobalRigidity}.
\end{proof}

\subsection{Global \texorpdfstring{$\FG$}{G}-Shtukas}\label{SectGlobalGShtukas}

Let $\BF_q$ be a finite field with $q$ elements, let $C$ be a smooth projective geometrically irreducible curve over $\BF_q$, and let $\FG$ be a flat affine group scheme of finite type over $C$. The relation to Section~\ref{LoopsAndSht} is as follows. We are mainly interested in the case where $\BaseOfD\dbl z\dbr\cong A_\nu$ and $\BP=\BP_\nu$.

\begin{definition}\label{Global Sht}
A \emph{global $\FG$-shtuka} $\ul\CG=(\CG,\charsect_1,\ldots,\charsect_n,\tauGlob)$ over an $\BF_q$-scheme $S$ is a tuple where
\begin{itemize}
\item $\CG$ is a $\FG$-torsor over $C_S$, 
\item $\charsect_1,\ldots,\charsect_n \in C(S)$ are $\BF_q$-morphisms called the \emph{characteristic sections of $\ul\CG$}, and
\item $\tau\colon \sigma^*\CG_{|_{{C_S}\setminus{\Gamma_{\charsect_1}\cup\ldots\cup\Gamma_{\charsect_n}}}}\isoto \CG_{|_{{C_S}\setminus{\Gamma_{\charsect_1}\cup\ldots\cup\Gamma_{\charsect_n}}}}$ is an isomorphism of $\FG$-torsors. Here $\Gamma_{\charsect_i}\subset C_S$ denotes the graph of the morphism $\charsect_i$.
\end{itemize}
We write $\sigma=\id_C\times\sigma_S$ for the $\BF_q$-Frobenius endomorphism $\sigma_S \colon  S \to S$ which acts as the identity on the points of $S$ and as the $q$-power map on the structure sheaf. We denote the \emph{moduli stack} of global $\FG$-shtukas by $\nabla_n\scrH^1(C,\FG)$. It is a stack fibered in groupoids over the category of $\BF_q$-schemes. Sometimes we will fix the sections $(\charsect_1,\ldots,\charsect_n)\in C^n(S)$ and simply call $\ul\CG=(\CG,\tauGlob)$ a global $\FG$-shtuka over $S$.
\end{definition}

In \cite[\ThmRepNablaH]{AH_Unif} we prove that $\nabla_n \scrH^1(C, \FG)$ is an ind-algebraic stack over $C^n$ (in the sense of \cite[\DefIndAlgStack]{AH_Unif}) which is ind-separated and locally of ind-finite type. However, we will not use this result in the present article, as we will mainly focus on local $\BP$-shtukas, and the relation between individual global $\FG$-shtukas and local $\BP$-shtukas. For a thorough discussion how our global $\FG$-shtukas and their moduli spaces generalize similar concepts in the literature, we refer to the introduction and to \cite[\RemDrinfeldVarshavsky]{AH_Unif}.

\medskip

There is also a notion of quasi-isogenies for global $\FG$-shtukas.

\begin{definition}\label{quasi-isogenyGlob}
Consider a scheme $S$ together with characteristic morphisms $\charsect_i\colon S\to C$ for $i=1,\ldots,n$ and let $\ul{\CG}=(\CG,\tau)$ and $\ul{\CG}'=(\CG',\tau')$ be two global $\FG$-shtukas over $S$ with the same characteristics $\charsect_i$. A \emph{quasi-isogeny} from $\ul\CG$ to $\ul\CG'$ is an isomorphism $f\colon\CG|_{C_S \setminus D_S}\isoto \CG'|_{C_S \setminus D_S}$ satisfying $\tau'\sigma^{\ast}(f)=f\tau$, where $D$ is some effective divisor on $C$. We denote the \emph{group of quasi-isogenies} of $\ul\CG$ to itself by $\QIsog_S(\ul\CG)$.
\end{definition}

Like for abelian varieties, rigidity of global $\FG$-shtukas only holds in fixed finite characteristics; see Proposition~\ref{PropGlobalRigidity}.

%%%%%%%%%%%%%%%%%%%%%%%%%%%%%%%%%%%%%%%%%%%%%%%%%%%%%%%%%%%%%%%%%%%%%%
%
%  Galois Representations Parametrized by Shtukas
%
%%%%%%%%%%%%%%%%%%%%%%%%%%%%%%%%%%%%%%%%%%%%%%%%%%%%%%%%%%%%%%%%%%%%%%

\section{Tate Modules for Local \texorpdfstring{$\BP$}{P}-Shtukas}\label{GalRepSht}
\setcounter{equation}{0}

In this chapter we assume that $\BP$ is a flat affine group scheme of finite type over $\BD$. For a scheme $S\in\Nilp_{\BaseOfD\dbl\zeta\dbr}$ let $\CO_S\dbl z\dbr$ be the sheaf of $\CO_S$-algebras on $S$ for the \fpqc-topology whose ring of sections on an $S$-scheme $\Test$ is the ring of power series $\CO_S\dbl z\dbr(\Test):=\Gamma(\Test,\CO_\Test)\dbl z\dbr$. Let $\CO_S\dpl z\dpr$ be the \fpqc-sheaf of $\CO_S$-algebras on $S$ associated with the presheaf $\Test\mapsto\Gamma(\Test,\CO_\Test)\dbl z\dbr[\frac{1}{z}]$. A sheaf $M$ of $\CO_S\dbl z\dbr$-modules on $S$ which is finite free $\fpqc$-locally on $S$ is already finite free Zariski-locally on $S$ by \cite[Proposition~2.3]{H-V}. We call those modules \emph{finite locally free sheaves of $\CO_S\dbl z\dbr$-modules}. We denote by $\hat\sigma^*$ the endomorphism of $\CO_S\dbl z\dbr$ and $\CO_S\dpl z\dpr$ that acts as the identity on the variable $z$, and is the $\BaseOfD$-Frobenius $b\mapsto (b)^{\#\BaseOfD}$ on local sections $b\in \CO_S$. For a sheaf $M$ of $\CO_S\dbl z\dbr$-modules on $S$ we set $\hat\sigma^* M:=M\otimes_{\CO_S\dbl z\dbr,\hat\sigma^*}\CO_S\dbl z\dbr$. We recall the definition of local shtukas and their quasi-isogenies from \cite[Definition~4.1]{H-V} and \cite[Definition~2.1.1]{HartlPSp}.

\begin{definition}\label{Local Sht}
\begin{enumerate}
\item 
A \emph{local shtuka over $S$} is a pair $(M, \tauLoc)$ consisting of a locally free sheaf $M$ of $\CO_S\dbl z\dbr$-modules of finite rank on $S$ and an isomorphism $\tauLoc\colon\hat{\sigma}^*M\otimes_{\CO_S\dbl z\dbr}\CO_S\dpl z\dpr\isoto M\otimes_{\CO_S\dbl z\dbr}\CO_S\dpl z\dpr$.
\item 
A local shtuka $\ulM:=(M,\tauLoc)$ is called \emph{\'etale} if $\tauLoc$ comes from an isomorphism of $\CO_S\dbl z\dbr$-modules $\hat{\sigma}^\ast M\isoto M$.
\item 
A \emph{morphism} $f\colon(M,\tauLoc)\to(M',\tauLoc')$ of local shtukas over $S$ is a morphism $f\colon M\to M'$ of $\CO_S\dbl z\dbr$-modules which satisfies $\tauLoc'\circ\hat\sigma^*f=f\circ\tauLoc$. We do not require that $f$ is an isomorphism. We denote by $\Sht_\BD(S)$ the category of local shtukas over $S$ and by $\EtSht_\BD(S)$ the category of \'etale local shtukas over $S$.
\end{enumerate}
\end{definition}

\begin{remark} \label{RemVect}
There is an equivalence of categories between the category $\scrH^1(\Spec\BaseOfD,L^+\GL_r)(S)$ and the category of locally free sheaves of $\CO_S\dbl z\dbr$-modules of rank $r$; see \cite[\S4]{H-V}. It induces an equivalence between the category of local $\GL_r$-shtukas over $S$ and the category consisting of local shtukas over $S$ of rank $r$ with isomorphisms as the only morphisms; see \cite[Lemma~4.2]{H-V}. 
\end{remark}

\begin{definition}\label{DefG.1}
A \emph{quasi-isogeny} between two local shtukas $(M,\tauLoc)\to(M',\tauLoc')$ is an isomorphism of $\CO_S\dpl z\dpr$-modules 
$$
f\colon M\otimes_{\CO_S\dbl z\dbr}\CO_S\dpl z\dpr\isoto M'\otimes_{\CO_S\dbl z\dbr}\CO_S\dpl z\dpr
$$ 
with $\tauLoc'\circ\hat\sigma^* f=f\circ\tauLoc$. 
\end{definition}

In analogy with $p$-divisible groups and abelian varieties, one can also assign a Galois representation to a given \'etale local shtuka as follows. Assume that $S$ is connected. Let $\bar{s}$ be a geometric point of $S$ and let $\pi_1^\et(S,\bar{s})$ denote the algebraic fundamental group of $S$ at $\bar{s}$. We define the (\emph{dual}) \emph{Tate functor} from the category of \'etale local shtukas $\EtSht_\BD(S)$ over $S$ to the category $\FF\FM od_{\BaseOfD\dbl z\dbr[\pi_1^\et(S,\bar{s})]}$ of finite free $\BaseOfD\dbl z\dbr$-modules equipped with a continuous action of $\pi_1^\et(S,\bar{s})$ as follows
\begin{eqnarray*}
{\check{T}_{-}}\colon \EtSht_{\BD}(S) &\longto & \FF\FM od_{\BaseOfD\dbl z\dbr[\pi_1^\et(S,\bar{s})]},\\
\ulM:=(M,\tauLoc) &\longmapsto & \check{T}_\ulM :=(M\otimes_{\CO_S\dbl z\dbr} \kappa(\bar{s})\dbl z\dbr)^{\tauLoc}.
\end{eqnarray*}
Here the superscript $\tauLoc$ denotes $\tauLoc$-invariants. Sometimes also the notation $\Koh^1_\et(\ulM,\BaseOfD\dbl z\dbr):=\check{T}_\ulM$ is used. Inverting $z$ we also consider the \emph{rational} (\emph{dual}) \emph{Tate functor}
\begin{eqnarray*}
{\check{V}_{-}}\colon \EtSht_{\BD}(S) &\longto &\FF\FM od_{\BaseOfD\dpl z\dpr[\pi_1^\et(S,\bar{s})]},\\
\ulM:=(M,\tauLoc)&\longmapsto & \check{V}_\ulM :=(M\otimes_{\CO_S\dbl z\dbr} \kappa(\bar{s})\dbl z\dbr)^{\tauLoc}\otimes_{\BaseOfD\dbl z\dbr}\BaseOfD\dpl z\dpr.
\end{eqnarray*}
where $\FF\FM od_{\BaseOfD\dpl z\dpr[\pi_1^\et(S,\bar{s})]}$ denotes the category of finite $\BaseOfD\dpl z\dpr$-vector spaces equipped with a continuous action of $\pi_1^\et(S,\bar{s})$. The functor $\check{V}_{-}$ transforms quasi-isogenies into isomorphisms.

\begin{proposition}\label{PropTateEquiv}
Let $S\in\Nilp_{\BaseOfD\dbl\zeta\dbr}$ be connected. Then the functor $\check{T}_{-}$ is an equivalence between the categories $\EtSht_{\BD}(S)$ and $\FF\FM od_{\BaseOfD\dbl z\dbr[\pi_1^\et(S,\bar{s})]}$. The functor $\check{V}_{-}$ is an equivalence between the category of \'etale local shtukas over $S$ with quasi-isogenies as morphisms and the category $\FF\FM od_{\BaseOfD\dpl z\dpr[\pi_1^\et(S,\bar{s})]}$ with isomorphisms as the only morphisms. There is a canonical isomorphism \mbox{$\check{T}_\ulM\otimes_{\BaseOfD\dbl z\dbr}\kappa(\bar{s})\dbl z\dbr \isoto \ulM\otimes_{\CO_S\dbl z\dbr} \kappa(\bar{s})\dbl z\dbr$} of $\kappa(\bar{s})\dbl z\dbr$-modules which is equivariant for the action of $\pi_1^\et(S,\bar{s})$ and $\tauLoc$, where $\pi_1^\et(S,\bar{s})$ acts trivially on $\ulM$ and $\tauLoc$ acts trivially on $\check{T}_\ulM$.
\end{proposition}

\begin{proof}
The statement for $\check{T}_{-}$ follows by the same arguments as \cite[Proposition~1.3.7]{HartlPSp}. It is analogous to \cite[Proposition~4.1.1]{Kat} and can be thought of as a positive characteristic analog of the Riemann-Hilbert correspondence. We describe the quasi-inverse functor. Consider an $\BaseOfD\dbl z\dbr[\pi_1^\et(S,\bar s)]$-module of rank $r$ and the corresponding representation $\pi\colon\pi_1^\et(S,\bar{s})\to\GL_r(\BaseOfD\dbl z\dbr)$. For each $m\in\BN$ let $S_m\to S$ be the finite Galois covering corresponding to the kernel of $\pi_1^\et(S,\bar{s})\xrightarrow{\;\pi\,}\GL_r(\BaseOfD\dbl z\dbr)\xrightarrow{\;\mod z^m\,}\GL_r\bigl(\BaseOfD\dbl z\dbr/(z^m)\bigr)$. Let $\wt M_m$ be the free module of rank $r$ over $\CO_{S_m}\dbl z\dbr/(z^m)=\CO_{S_m}\otimes_\BaseOfD\BaseOfD\dbl z\dbr/(z^m)$ and equip it with the Frobenius $\tauLoc:=\hat\sigma\otimes\id$ and the action of $\gamma\in\Gal(S_m/S)$ by \mbox{$\gamma(b\otimes f)=\gamma^*(b)\otimes\pi(\gamma^{-1})(f)$} for $b\in\CO_{S_m}$ and $f\in\BaseOfD\dbl z\dbr/(z^m)^{\oplus r}$. Then $\wt M_m$ descends to a locally free $\CO_S\dbl z\dbr/(z^m)$-module $ M_m$ of rank $r$ and $\tauLoc$ descends to an isomorphism $\tauLoc\colon\hat\sigma^* M_m\isoto M_m$. This makes $M:=\invlim M_m$ into an \'etale local shtuka over $S$ and yields the quasi-inverse to $\check{T}_{-}$.

That $\check{V}_{-}$ is essentially surjective follows from the fact that $\pi_1^\et(S,\bar s)$ is compact which implies that every $\BaseOfD\dpl z\dpr[\pi_1^\et(S,\bar{s})]$-module arises by inverting $z$ from an $\BaseOfD\dbl z\dbr[\pi_1^\et(S,\bar{s})]$-module. To see that $\check{V}_{-}$ is fully faithful consider two local shtukas $\ulM$ and $\ulM'$ over $S$ and an isomorphism $f\colon\check{V}_\ulM\isoto\check{V}_{\ulM'}$. There are powers $z^N$ and $z^{N'}$ of $z$ such that $z^Nf$ and $z^{N'}f^{-1}$ come from morphisms $\check{T}_\ulM\to\check{T}_{\ulM'}$ respectively $\check{T}_{\ulM'}\to\check{T}_{\ulM}$. Under the equivalence $\check{T}_{-}$ these in turn come from morphisms $g\colon\ulM\to\ulM'$ and $g'\colon\ulM'\to\ulM$. Then $gg'=z^{N+N'}$ and this implies that $g$ and $g'$ are quasi-isogenies. Clearly $\check{V}_{z^{-N}\!g}=f$ and $\check{V}_{z^{-N'}\!g'}=f^{-1}$. This proves that $\check{V}_{-}$ is an equivalence of categories.
\end{proof}

Let $\Vect_\BD$ be the groupoid over $\Nilp_{\BaseOfD\dbl\zeta\dbr}$ whose $S$-valued points is the category of locally free sheaves of $\CO_S\dbl z\dbr$-modules with isomorphisms as the only morphisms. Let $\Rep_{\BaseOfD\dbl z \dbr}\BP$ be the category of representations $\rho\colon\BP \to \GL(V)$ of $\BP$ in finite free $\BaseOfD\dbl z\dbr$-modules $V$, that is, $\rho$ is a morphism of algebraic groups over $\BaseOfD\dbl z \dbr$. Any such representation $\rho$ gives a functor 
\[
\rho_\ast\colon \scrH^1(\Spec \BaseOfD,L^+\BP)\to \Vect_\BD
\]
which sends an $L^+\BP$-torsor $\CL_+\in\scrH^1(\Spec \BaseOfD,L^+\BP)(S)$ to the sheaf of $\CO_S\dbl z\dbr$-modules associated with the following presheaf
\begin{equation}\label{EqPresheaf}
\Test\;\longmapsto\;\Bigl(\CL_+(\Test)\times\bigl(V\otimes_{\BaseOfD\dbl z\dbr}\CO_S\dbl z\dbr(\Test)\bigr)\Bigr)\big/L^+\BP\bigl(\Test)\,.
\end{equation}
The functor $\rho_\ast\colon \scrH^1(\Spec \BaseOfD,L^+\BP)\to \Vect_\BD$ induces a functor from the category of local $\BP$-shtukas to the category of local shtukas which we likewise denote $\rho_*$. This functor is also compatible with quasi-isogenies.

\begin{definition}\label{DefTateFunctor}
Let $Funct^\otimes(\Rep_{\BaseOfD\dbl z \dbr}\BP, \FF\FM od_{\BaseOfD\dbl z\dbr[\pi_1^\et(S,\bar{s})]})$, resp.\ $Funct^\otimes(\Rep_{\BaseOfD\dbl z \dbr}\BP, \FF\FM od_{\BaseOfD\dpl z\dpr[\pi_1^\et(S,\bar{s})]})$, denote the category whose objects are tensor functors from $\Rep_{\BaseOfD\dbl z \dbr}\BP$ to $\FF\FM od_{\BaseOfD\dbl z\dbr[\pi_1^\et(S,\bar{s})]}$, respectively to $\FF\FM od_{\BaseOfD\dpl z\dpr[\pi_1^\et(S,\bar{s})]}$, and whose morphisms are isomorphisms of functors.
We define the (\emph{dual}) \emph{Tate functor} $\check{\CT}_{-}$, respectively the \emph{rational} (\emph{dual}) \emph{Tate functor} $\check{\CV}_{-}$ as the functors
\begin{eqnarray*}
\check{\CT}_{-}\colon \es \EtSht_{\BP}^{\BD}(S) &\longto & Funct^\otimes (\Rep_{\BaseOfD\dbl z \dbr}\BP,\; \FF\FM od_{\BaseOfD\dbl z\dbr[\pi_1^\et(S,\bar{s})]})\,\\
\ul \CL &\longmapsto & \check{\CT}_{\ul \CL}\colon\; \rho \mapsto \check{T}_{\rho_\ast \ul{\CL}},\\
\check{\CV}_{-}\colon \es \EtSht_{\BP}^{\BD}(S) &\longto & Funct^\otimes (\Rep_{\BaseOfD\dbl z \dbr}\BP,\; \FF\FM od_{\BaseOfD\dpl z\dpr[\pi_1^\et(S,\bar{s})]})\,\\
\ul \CL &\longmapsto & \check{\CV}_{\ul \CL}\colon\; \rho \mapsto \check{V}_{\rho_\ast \ul{\CL}}.
\end{eqnarray*}
\end{definition}

That $\check{\CT}_{-}$ and $\check{\CV}_{-}$ are indeed tensor functors, follows from the fact that $\ul\CL\mapsto\rho_*\ul\CL$ is a tensor functor and from the equivariant isomorphism $\check{T}_{\rho_*\ul\CL}\otimes_{\BaseOfD\dbl z\dbr}\kappa(\bar{s})\dbl z\dbr \isoto \rho_*\ul\CL\otimes_{\CO_S\dbl z\dbr} \kappa(\bar{s})\dbl z\dbr$ from Proposition~\ref{PropTateEquiv}. If $\BP$ is smooth with connected special fiber and $\ul\CL$ is an \'etale local $\BP$-shtuka then the composition of the tensor functor $\check{\CT}_{\ul\CL}$ followed by the forgetful functor $F\colon\FF\FM od_{\BaseOfD\dbl z\dbr[\pi_1^\et(S,\bar{s})]}\to\FF\FM od_{\BaseOfD\dbl z\dbr}$ is isomorphic to the forgetful fiber functor $\omega^\circ\colon\Rep_{\BaseOfD\dbl z \dbr}\BP\to\FF\FM od_{\BaseOfD\dbl z\dbr}$ by Corollary~\ref{CorEtIsTrivial}. Indeed, the base change $\ul\CL_{\bar s}$ of $\ul\CL$ to $\bar s=\Spec\kappa(\bar s)$ is isomorphic to $\ul\BL_0:=\bigl((L^+\BP)_{\bar s},1\!\cdot\!\hat\sigma^*\bigr)$ and the functor $F\circ\check{\CT}_{\ul\BL_0}$ is isomorphic to $\omega^\circ$. This yields a conjugacy class of isomorphisms $\Aut^\otimes(F\circ\check{\CT}_{\ul\CL})\cong\Aut^\otimes(\omega^\circ)=\BP$. Since every $\gamma\in\pi_1^\et(S,\bar s)$ acts as a tensor automorphism of $\check{\CT}_{\ul\CL}$, the tensor functor $\check{\CT}_{\ul\CL}$ corresponds to a conjugacy class of Galois representations $\pi\colon\pi_1^\et(S,\bar s)\to\BP(\BaseOfD\dbl z\dbr)$. Now Proposition~\ref{PropTateEquiv} generalizes as follows.

\begin{proposition}\label{PropTateEquivP}
Let $\BP$ be smooth over $\BD$ with connected special fiber and let $S\in\Nilp_{\BaseOfD\dbl\zeta\dbr}$ be a connected scheme. Then the functor $\check{\CT}_{-}$ is an equivalence between the category $\EtSht_\BP^\BD(S)$ and the category $Funct^\otimes (\Rep_{\BaseOfD\dbl z \dbr}\BP,\; \FF\FM od_{\BaseOfD\dbl z\dbr[\pi_1^\et(S,\bar{s})]})$. The functor $\check{\CV}_{-}$ from the category of \'etale local $\BP$-shtukas over $S$ with quasi-isogenies as morphisms to the category $Funct^\otimes (\Rep_{\BaseOfD\dbl z \dbr}\BP,\; \FF\FM od_{\BaseOfD\dpl z\dpr[\pi_1^\et(S,\bar{s})]})$ is fully faithful.
\end{proposition}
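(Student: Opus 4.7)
The plan is to reduce both parts to the $\GL_r$-case treated in Proposition~\ref{PropTateEquiv} via Tannakian duality, using a faithful representation of $\BP$ provided by Proposition~\ref{ffrep}. Throughout I will exploit the fact that any $\rho\in\Rep_{\BaseOfD\dbl z\dbr}\BP$ converts an étale local $\BP$-shtuka $\ul\CL$ over $S$ into an étale local shtuka $\rho_\ast\ul\CL$ via the associated-bundle construction \eqref{EqPresheaf}, and that this assignment is compatible with tensor products and duals.

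For the first claim, the main step is essential surjectivity. Given $\CF\in Funct_0^\otimes(\Rep_{\BaseOfD\dbl z\dbr}\BP,\FF\FM od_{\BaseOfD\dbl z\dbr[\pi_1(S,\bar s)]})$, I first fix an isomorphism $F\circ\CF\cong\omega^\circ$. For each $\gamma\in\pi_1(S,\bar s)$, the family of automorphisms of the $\BaseOfD\dbl z\dbr$-modules $\CF(\rho)$ induced by $\gamma$ becomes, under this identification, a tensor automorphism of $\omega^\circ$; by the standard Tannakian identification $\Aut^\otimes(\omega^\circ)=\BP$ this is the same as an element of $\BP(\BaseOfD\dbl z\dbr)$, so I obtain a homomorphism $\pi\colon\pi_1(S,\bar s)\to\BP(\BaseOfD\dbl z\dbr)$ that is continuous in the sense that its reduction $\pi_n$ modulo $z^n$ factors through a finite quotient for every $n$. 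From $\pi$ I construct $\ul\CL$ by mimicking the quasi-inverse from the proof of Proposition~\ref{PropTateEquiv}: for each $n$ let $S_n\to S$ be the finite étale Galois cover corresponding to $\pi_n$, equip the trivial $L^+\BP/(z^n)$-torsor on $S_n$ with the Galois action through $\pi_n$ and the canonical Frobenius $1\cdot\hat\sigma^\ast$, descend by effective \fpqc-descent for affine schemes as in Proposition~\ref{formal torsor prop}, and pass to the inverse limit in $n$. The comparison $\check\CT_{\ul\CL}\cong\CF$ then reduces, representation by representation via $\rho_\ast$, to Proposition~\ref{PropTateEquiv}. For full faithfulness I would fix a faithful representation $\rho_0\colon\BP\hookrightarrow\SL_{r,\BD}$ (Proposition~\ref{ffrep}\ref{ffrep_A}) and observe that an isomorphism of tensor functors $\check\CT_{\ul\CL}\cong\check\CT_{\ul\CL'}$ yields by Proposition~\ref{PropTateEquiv} a system of isomorphisms $\rho_\ast\ul\CL\isoto\rho_\ast\ul\CL'$ natural in $\rho$ and compatible with tensor products; tensor-compatibility then forces the resulting isomorphism of the $L^+\SL_r$-torsors at $\rho_0$ to be $L^+\BP$-equivariant, hence to come from a unique isomorphism $\ul\CL\isoto\ul\CL'$.

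The second claim is parallel and slightly easier: $f\colon\check\CV_{\ul\CL}\isoto\check\CV_{\ul\CL'}$ lifts, for each $\rho$ and by Proposition~\ref{PropTateEquiv}, to a unique quasi-isogeny $\rho_\ast\ul\CL\to\rho_\ast\ul\CL'$; tensor-compatibility together with a faithful $\rho_0$ then assembles these into a unique quasi-isogeny $\ul\CL\to\ul\CL'$ whose $\check\CV$-image is $f$, while faithfulness is immediate from the injectivity of $\rho_{0,\ast}$ on quasi-isogenies. I expect the hardest step of the whole argument to be the Tannakian reconstruction in essential surjectivity, namely producing the descent datum from $\pi$ and checking that the resulting local $\BP$-shtuka's Tate tensor functor matches $\CF$ not merely module-by-module but as a tensor functor, and passing cleanly from the level-$n$ descents to a limit $L^+\BP$-torsor using the smoothness of $\BP$ and Proposition~\ref{formal torsor prop}.
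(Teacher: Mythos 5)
Your proposal is correct and follows essentially the same route as the paper: essential surjectivity is obtained exactly as in the paper by using $\BP=\Aut^\otimes(\omega^\circ)$ to convert $\CF$ into a continuous representation $\pi\colon\pi_1(S,\bar s)\to\BP(\BaseOfD\dbl z\dbr)$, descending trivial torsors along the finite Galois covers $S_m$ level by level and passing to the limit via Proposition~\ref{formal torsor prop}. For the (full) faithfulness statements the paper packages the step you phrase via a faithful $\rho_0$ slightly differently — it trivializes étale-locally and identifies the set of tensor-compatible isomorphisms with $L^+\BP(S')$ resp.\ $L\genericG(S')$ through $\Aut^\otimes(\omega^\circ)$ and then descends — but both arguments rest on the same reduction to Proposition~\ref{PropTateEquiv} and the same Tannakian input.
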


\begin{proof}
To construct the functor which is quasi-inverse to $\check{\CT}_{-}$ we fix a tensor functor $\CF$ in\\ $Funct^\otimes (\Rep_{\BaseOfD\dbl z \dbr}\BP,\; \FF\FM od_{\BaseOfD\dbl z\dbr[\pi_1^\et(S,\bar{s})]})$. The difference of the two $\BaseOfD\dbl z\dbr$-rational fiber functors $\omega^\circ$ and $F\circ\CF$ on $\Rep_{\BaseOfD\dbl z \dbr}\BP$ is given by the torsor $\Isom^\otimes(\omega^\circ,F\circ\CF)$ over $\BP=\Aut^\otimes(\omega^\circ)$; use \cite[Corollary~5.20]{Wed}. Since the special fiber of $\BP$ is connected, this torsor has an $\BaseOfD$-valued point by Lang's theorem~\cite[Theorem~2]{Lang}. Since $\BP$ and hence $\Isom^\otimes(\omega^\circ,F\circ\CF)$ is smooth over $\BD$ this point lifts to an $\BaseOfD\dbl z\dbr$-valued point of $\Isom^\otimes(\omega^\circ,F\circ\CF)$, that is to a tensor isomorphism $\alpha\colon\omega^\circ\isoto F\circ\CF$ over $\BaseOfD\dbl z\dbr$ inducing an isomorphism $\alpha_*\colon\BP\isoto\Aut^\otimes(F\circ\CF)$. Since $\pi_1^\et(S,\bar{s})$ acts as automorphisms of the fiber functor $F\circ\CF$, the functor $\CF$ corresponds to a representation $\pi\colon\pi_1^\et(S,\bar{s})\to\BP(\BaseOfD\dbl z\dbr)$ which depends on $\alpha$ up to conjugation in $\BP$. For each $m\in\BN$ we let $S_m\to S$ be the finite \'etale Galois covering corresponding to the kernel of $\pi_1^\et(S,\bar{s})\xrightarrow{\es\pi\;}\BP(\BaseOfD\dbl z\dbr)\xrightarrow{\es\mod z^m\;}\BP_m\bigl(\BaseOfD\dbl z\dbr/(z^m)\bigr)$ where $\BP_m:=\BP\times_\BD\Spec\BaseOfD\dbl z\dbr/(z^m)$. Let $\wt\CG_m$ be the trivial $\BP_m$-torsor over $S_m\times_\BaseOfD\Spec\BaseOfD\dbl z\dbr/(z^m)$ and equip it with the Frobenius $\tauLoc:=\id\colon\hat\sigma^*\wt\CG_m\isoto\wt\CG_m$. Via the action of $\Gal(S_m/S)$ through $\pi$ on $\wt\CG_m$ the latter descends to a $\BP_m$-torsor $\CG_m$ over $S\times_\BaseOfD\Spec\BaseOfD\dbl z\dbr/(z^m)$ and $\tauLoc$ descends to an isomorphism $\tauLoc\colon\hat\sigma^* \CG_m\to \CG_m.$ This makes $\hat\CG:=\invlim \CG_m$ into a formal $\hat\BP$-torsor over $S$ together with an isomorphism $\tauLoc\colon\hat\sigma^*\hat\CG\isoto\hat\CG$; see Definition~\ref{formal torsor def}. By Proposition~\ref{formal torsor prop} it corresponds to an $L^+\BP$-torsor $\CL_+$ together with an isomorphism $\tauLoc\colon\hat\sigma^*\CL_+\isoto\CL_+$, that is, to the \'etale local $\BP$-shtuka $\ul\CL=(\CL_+,\tauLoc)$. It satisfies $\check{\CT}_{\ul\CL}\cong\CF$. A different isomorphism $\alpha$ gives a different local $\BP$-shtuka which is canonically isomorphic to $\ul\CL$. This yields the quasi-inverse to $\check{\CT}_{-}$.

To prove that $\check{\CV}_{-}$ is fully faithful let $\ul\CL=(\CL_+,\tauLoc)$ and $\ul\CL'=(\CL'_+,\tauLoc')$ be two \'etale local $\BP$-shtukas over $S$ and let $\delta\colon\check{\CV}_{\ul\CL}\isoto\check{\CV}_{\ul\CL'}$ be an isomorphism of tensor functors. We consider the following functor
$$
\wh \CM_{-} \colon \scrH^1(\Spec \BaseOfD,L^+\BP)(S)\to Funct^\otimes(\Rep_{\BaseOfD\dbl z\dbr}\BP, \FM od_{\CO_S\dbl z\dbr}),
$$
which sends an $L^+\BP$-torsor $\CL_+$ to the tensor functor mapping the representation $\rho$ to the $\CO_S\dbl z\dbr$-module $\rho_\ast \CL_+$ from \eqref{EqPresheaf}. By Proposition~\ref{PropTateEquiv} the isomorphism $\delta|_{\rho}$ between $\check{\CV}_{\ul\CL}(\rho)=\check{V}_{\rho_*\ul\CL}$ and $\check{\CV}_{\ul\CL'}(\rho)=\check{V}_{\rho_*\ul\CL'}$ comes from a quasi-isogeny between $\rho_*\ul\CL=\bigl(\wh\CM_{\CL_+}(\rho),\rho_*\tauLoc\bigr)$ and $\rho_*\ul\CL'=\bigl(\wh\CM_{\CL'_+}(\rho),\rho_*\tauLoc'\bigr)$. Therefore the isomorphism $\delta$ induces an isomorphism $\wh\CM_{\CL_+}\otimes_{\CO_S\dbl z\dbr} \CO_S\dpl z\dpr\isoto\wh\CM_{\CL'_+}\otimes_{\CO_S\dbl z\dbr} \CO_S\dpl z\dpr$. Take an \fppf-cover $S'\to S$ trivializing $\CL_+$ and $\CL'_+$ and fix trivializations $\CL_+\cong(L^+\BP)_{S'}\cong\CL'_+$. Then we have
$$
\Isom^\otimes\bigl(\wh \CM_{(\CL_+)_{S'}}\otimes_{\CO_{S'}\dbl z\dbr} \CO_{S'}\dpl z\dpr\,,\,\wh \CM_{(\CL_+)_{S'}}\otimes_{\CO_{S'}\dbl z\dbr} \CO_{S'}\dpl z\dpr\bigr)\;\cong\;\Aut^\otimes(\omega^\circ)\bigl(\CO_{S'}\dpl z\dpr\bigr)\;=\;L\genericG(S'),
$$
because $\wh\CM_{(L^+\BP)_{S'}}=\omega^\circ\otimes_{\BaseOfD\dbl z\dbr} \CO_{S'}\dpl z\dpr$ and $\BP=\Aut^\otimes(\omega^\circ)$ by \cite[Corollary~5.20]{Wed}.
Therefore $\delta$ gives an isomorphism $h_{S'}\colon (L\genericG)_{S'}\to (L\genericG)_{S'}$. The morphism $h_{S'}$ inherits the descent datum coming from the fact that $\delta$ is defined over $S$, and hence it defines an isomorphism $h\colon \CL\isoto\CL'$, where $\CL$ and $\CL'$ denote the $L\genericG$-torsors associated with $\CL_+$ and $\CL'_+$. One easily checks that $h$ satisfies $\tauLoc'\circ \hat{\sigma}^\ast h=h\circ\tauLoc$ and gives a quasi-isogeny $h\colon \ul\CL\to\ul\CL'$. 
\end{proof}

\begin{remark}\label{RemTateEquivP}
In general the functor $\check{\CV}_{-}$ does not need to be an equivalence, not even onto the category of those tensor functors $\CF\colon\Rep_{\BaseOfD\dbl z \dbr}\BP\to \FF\FM od_{\BaseOfD\dpl z\dpr[\pi_1^\et(S,\bar{s})]}$ for which $F\circ\CF\cong\omega^\circ\otimes_{\BaseOfD\dbl z\dbr}\BaseOfD\dpl z\dpr$. For example let $\BP$ be the Iwahori subgroup of $\GL_2$, that is,
\[
\BP(\BaseOfD\dbl z\dbr)=\{\,A\in\GL_2(\BaseOfD\dbl z\dbr)\colon A\equiv \TS\binom{*\:*}{0\:*}\mod z\,\}\,.
\]
Let $x$ be transcendental over $\BaseOfD$ and let $S=\Spec\BaseOfD(x)$. Set $G:=\pi_1^\et(S,\bar{s})=\Gal\bigl(\BaseOfD(x)^\sep/\BaseOfD(x)\bigr)$ and consider a representation $\pi\colon G\to\GL_2(\BaseOfD\dbl z\dbr)$ such that the residual representation $\ol\pi\colon G\to\GL_2(\BaseOfD)$ is irreducible. This implies that $\pi(G)\not\subset\BP(\BaseOfD\dbl z\dbr)$ and hence the tensor functor $\CF\colon\Rep_{\BaseOfD\dbl z \dbr}\BP\to \FF\FM od_{\BaseOfD\dpl z\dpr[G]}$ given by 
\[
\rho\longmapsto\bigl[ G\xrightarrow{\;\pi\,}\GL_2(\BaseOfD\dbl z\dbr)\into\genericG\bigl(\BaseOfD\dpl z\dpr\bigr)\xrightarrow{\;\rho\:}\GL\bigl(\omega^\circ(\rho)\bigr)\bigl(\BaseOfD\dpl z\dpr\bigr)\bigr]
\]
cannot come from a local $\BP$-shtuka over $\BaseOfD(x)$.

Note that such a representation $\pi$ exists. For example if $\phi\colon\BF_q[t]\to\End\BG_{a,\BaseOfD(x)}$ is a Drinfeld-$\BF_q[t]$-module of rank $2$ over $\BaseOfD(x)$ without potential complex multiplication, then for almost all primes $\nu$ of $\BF_q[t]$ the Galois representation $\pi_{\phi,\nu}\colon G\to\GL_2(A_\nu)$ on the $\nu$-adic Tate module of $\phi$ has this property by \cite[Theorem~A]{PT06b} or \cite[Theorem~0.1]{PR09a}. For a concrete example let $\phi_t=1-x\tauGlob+\tauGlob^2$ and $\nu=(t)$. Then $\phi[t]=\{y\in\BaseOfD(x)^\sep\colon y^{q^2}-xy^q+y=0\}$. If $q=2$ it is easy to see that $y^3-xy+1$ is irreducible in $\BaseOfD(x)[y]$ and has splitting field of degree $6$ over $\BaseOfD(x)$. This implies that $\ol\pi_{\phi,(t)}(G)=\GL_2(\BF_2)$. The reason for the failure of $\check{\CV}_{-}$ to be an equivalence of course lies in the fact that the Drinfeld-module $\phi$ does not carry a level structure over $\BaseOfD(x)$ whereas any \'etale local $\BP$-shtuka for the above Iwahori group $\BP$ carries a $\Gamma_0(\nu)$-level structure.

Even if we assume that $\BP$ is a maximal parahoric subgroup of $\genericG$ as in Remark~\ref{Flagisquasiproj}, we expect that it depends on the group $\genericG$ whether $\check\CV_-$ is an equivalence. Namely, in the proof of Proposition~\ref{PropTateEquivP}, when we try to extend the construction of the quasi-inverse of $\check{\CT}_{-}$ to $\check\CV_-$ we obtain a representation $\pi_1^\et(S,\bar s)\to\genericG(\BaseOfD\dpl z\dpr)$. For $\check\CV_-$ to be an equivalence we need that up to conjugation this representation factors through $\BP(\BaseOfD\dbl z\dbr)$. We know that $\pi_1^\et(S,\bar s)$ is a profinite group and hence compact. Therefore the image of the representation is contained in a maximal compact subgroup. So the question arises whether every maximal compact subgroup of $\genericG(\BaseOfD\dpl z\dpr)$ is conjugate to $\BP(\BaseOfD\dbl z\dbr)$. This is true when $\BP=\GL_r$ or $\SL_r$ and in this case $\check\CV_-$ is an equivalence.

But in general the answer may be negative for two reasons. First of all, although every maximal parahoric subgroup is maximally compact, the converse may fail. For example for $\genericG=\PGL_2$ the subgroup generated by the Iwahori subgroup $\{\,A\in\PGL_2(\BaseOfD\dbl z\dbr)\colon A\equiv \binom{*\:*}{0\:*}\mod z\,\}$ and by $\binom{0\:1}{z\:0}$ is maximally compact but not parahoric, because it is the stabilizer of the midpoint of an edge in the Bruhat--Tits tree of $\PGL_2$. This group contains the Iwahori subgroup with index $2$. Secondly, not all maximal parahoric subgroups need to be conjugate, because they are the stabilizers of $0$-simplices in the Bruhat--Tits building, but not all $0$-simplices are conjugate in general. This occurs for example when $\genericG=\Spm_{2r}$.
\end{remark}

%%%%%%%%%%%%%%%%%%%%%%%%%%%%%%%%%%%%%%%%%%%%%%%%%%%%%%%%%%%%%%%%%%%%%%
%
%  Representability of The Rapoport--Zink functor\label{R-Z Space}
%
%%%%%%%%%%%%%%%%%%%%%%%%%%%%%%%%%%%%%%%%%%%%%%%%%%%%%%%%%%%%%%%%%%%%%%

\section{The Rapoport--Zink Spaces for Local \texorpdfstring{$\BP$}{P}-Shtukas}\label{R-Z Space}
\setcounter{equation}{0}

In this chapter we assume that $\BP$ is a smooth affine group scheme over $\BD$. 

Rapoport and Zink  constructed a moduli space for $p$-divisible groups together with a quasi-isogeny to a fixed one (and with some extra structure such as a polarization, endomorphisms, or a level structure). They proved that this moduli space is ind-representable by a formal scheme locally formally of finite type over $\BZ_p$.

We already mentioned that local $\BP$-shtukas behave analogously to $p$-divisible groups. However, this analogy is not perfect, unless we restrict to ``bounded'' local $\BP$-shtukas as the analogous objects corresponding to $p$-divisible groups. More precisely we bound the \emph{Hodge polygon} of a local $\BP$-shtuka $(\CL_+,\tauLoc)$, that is, the relative position of $\hat\sigma^*\CL_+$ and $\CL_+$ under the isomorphism $\tauLoc$; see Definition~\ref{DefBDLocal}\ref{DefBDLocal_B} below. This is motivated by the fact that $F$-isocrystals and Dieudonn\'e-modules also have bounded Hodge slopes. We will show that Rapoport--Zink spaces for bounded local $\BP$-shtukas are formal schemes locally formally of finite type over $\Spf\BaseOfD\dbl\zeta\dbr$. When $\BP:=G_0\times_\BaseOfD\BD$ for a connected split reductive group $G_0$ over $\BaseOfD$ this was proved in \cite[Theorem~6.3]{H-V}. In the non-constant case for a smooth affine group $\BP$ over $\BD$ we will give an axiomatic definition of the boundedness condition in Section~\ref{BC}. We start with the unbounded situation.

%%%%%%%%%%%%%%%%%%%%%%%%%%%%%%%%%%%%%%%%%%%%%%%%%%%%%%%%%%%%%%%%%%%%%%

\subsection{Unbounded Rapoport--Zink Spaces}\label{UnboundedRZ}

For a scheme $\SSS$ in $\Nilp_{\BaseOfD\dbl\zeta\dbr}$ let $\bar{\SSS}$ denote the closed subscheme $\Var_\SSS(\zeta)\subseteq \SSS$. On the other hand for a scheme $\bar \TTT$ over $\BaseOfD$ we set $\wh{\TTT}:=\bar \TTT\whtimes_{\Spec\BaseOfD}\Spf\BaseOfD\dbl\zeta\dbr$. Then $\wh \TTT$ is a $\zeta$-adic formal scheme with $\bar \TTT=\Var_{\wh \TTT}(\zeta)$. So the underlying topological spaces of $\bar\TTT$ and $\wh{\TTT}$ coincide. We let $\Nilp_{\wh{\TTT}}$ be the category of $\wh{\TTT}$-schemes on which $\zeta$ is locally nilpotent.

\begin{definition}\label{RZ space}
With a given local $\BP$-shtuka $\ul{\CL}_0$ over an $\BaseOfD$-scheme $\bar{\TTT}$ we associate the functor
\begin{eqnarray*}
\ul{\CM}_{\ul{\CL}_0}\colon (\Nilp_{\wh{\TTT}})^o &\longto&  \Sets\\
\SSS &\longmapsto & \big\{\text{Isomorphism classes of }(\ul{\CL},\bar{\delta})\colon\;\text{where }\ul{\CL}~\text{is a local $\BP$-shtuka}\\ 
&&~~~~ \text{over $\SSS$ and }\bar{\delta}\colon  \ul{\CL}_{\bar{\SSS}}\to \ul{\CL}_{0,\bar{\SSS}}~\text{is a quasi-isogeny  over $\bar{\SSS}$}\big\}. 
\end{eqnarray*}
Here we say that $(\ul\CL,\bar\ppsi)$ and $(\ul\CL',\bar\ppsi')$ are isomorphic if $\bar\ppsi^{-1}\circ\bar\ppsi'$ lifts via Proposition~\ref{PropRigidityLocal} to an isomorphism $\ul\CL'\isoto\ul\CL$. The group $\QIsog_{\bar \TTT}(\ul{\CL}_0)$ of quasi-isogenies of $\ul{\CL}_0$ acts on the functor $\ul{\CM}_{\ul{\CL}_0}$ via $g\colon(\ul\CL,\bar\delta)\mapsto(\ul\CL,g\circ\bar\delta)$ for $g\in\QIsog_{\bar \TTT}(\ul{\CL}_0)$. We will show that $\ul{\CM}_{\ul{\CL}_0}$ is representable by an ind-scheme which we call an \emph{unbounded Rapoport--Zink space for local $\BP$-shtukas}.
\end{definition}

\begin{remark}\label{RZ_space_remark}
Note that by rigidity of quasi-isogenies (Proposition~\ref{PropRigidityLocal}) the functor $\ul{\CM}_{\ul{\CL}_0}$ is naturally isomorphic to the functor
\begin{eqnarray*}
\SSS &\mapsto & \big\{\text{Isomorphism classes of}~(\ul{\CL},\delta)\colon \;\text{where }\ul{\CL}~\text{is a local $\BP$-shtuka}\\ 
&&~~~~~~~~~\text{over $\SSS$ and}~\delta\colon  \ul{\CL}_\SSS\to \ul{\CL}_{0,\SSS}~\text{is a quasi-isogeny over $\SSS$}\big\}. 
\end{eqnarray*}
This also shows that $\id_{\ul\CL}$ is the only automorphism of $(\ul\CL,\delta)$, and for this reason we do not need to consider $\ul\CM_{\ul\CL_0}$ as a stack.
\end{remark}

\begin{remark}\label{Flagisquasiproj}
In order to show that $\ul{\CM}_{\ul{\CL}_0}$ is representable by an ind-scheme we recall the definition of the affine flag variety $\SpaceFl_\BP$. It is defined to be the $fpqc$-sheaf associated with the presheaf
$$
R\;\longmapsto\; L\genericG(R)/L^+\BP(R)\;=\;\BP\left(R\dpl z \dpr \right)/\BP\left(R\dbl z\dbr\right)
$$ 
on the category of $\BaseOfD$-algebras; compare Definition~\ref{DefLoopGps}. Pappas and Rapoport~\cite[Theorem~1.4]{PR2} show that $\SpaceFl_\BP$ is represented by an ind-scheme which is ind-quasi-projective over $\BaseOfD$, and hence ind-separated and of ind-finite type over $\BaseOfD$. Moreover, they show that the quotient morphism $L\genericG \to \SpaceFl_\BP$ admits sections locally for the \'etale topology. They proceed as follows. When $\BP = \SL_{r,\BD}$, the \fpqc-sheaf $\SpaceFl_\BP$ is called the \emph{affine Grassmanian}. It is an inductive limit of projective schemes over $\BaseOfD$, that is, ind-projective over $\BaseOfD$; see \cite[Theorem~4.5.1]{B-D} or \cite{Faltings03,Ngo-Polo}. By \cite[Proposition~1.3]{PR2} and \cite[Proposition~2.1]{AH_Unif} there is a faithful representation $\BP\into\SL_r$ with quasi-affine quotient. Pappas and Rapoport show in the proof of \cite[Theorem~1.4]{PR2} that $\SpaceFl_{\BP} \to \SpaceFl_{\SL_r}$ is a locally closed embedding, and moreover, if $\SL_r/\BP$ is affine, then $\SpaceFl_{\BP} \to \SpaceFl_{\SL_r}$ is even a closed embedding and $\SpaceFl_{\BP}$ is ind-projective. More generally, if the fibers of $\BP$ over $\BD$ are geometrically connected, it was proved by Richarz \cite[Theorem~A]{Richarz13} that $\SpaceFl_\BP$ is ind-projective if and only if $\BP$ is a parahoric group scheme in the sense of Bruhat and Tits \cite[D\'efinition~5.2.6]{B-T}; see also \cite{H-R}. Note that, in particular, a parahoric group scheme is smooth with connected fibers and reductive generic fiber.
\end{remark}

\medskip

Let us view the formal scheme $\wh\TTT=\bar\TTT\whtimes_\BaseOfD\Spf \BaseOfD\dbl\zeta\dbr$ as the ind-scheme $\dirlim\bar\TTT\times_\BaseOfD\Spec \BaseOfD[\zeta]/(\zeta^{m})$. We may form the fiber product $\wh{\SpaceFl}_{\BP,\wh\TTT}:=\SpaceFl_{\BP}\whtimes_\BaseOfD\wh\TTT$ in the category of ind-schemes (see \cite[7.11.1]{B-D}). Note that this fiber product can be either viewed as the restriction of the sheaf $\SpaceFl_{\BP}$ to the \fpqc-site of schemes in $\Nilp_{\wh\TTT}$ or also as the formal completion of $\SpaceFl_\BP\times_\BaseOfD\,\bar\TTT \times_\BaseOfD \Spec \BaseOfD\dbl\zeta\dbr$ along the special fiber $\Var(\zeta)$.

\begin{theorem} \label{ThmModuliSpX}
If $\BP$ is a smooth affine group scheme over $\BD$ the functor $\ul\CM_{\ul\CL_0}$ from Definition~\ref{RZ space} is represented by an ind-scheme, ind-quasi-projective over $\wh \TTT=\bar \TTT\whtimes_{\BaseOfD}\Spf\BaseOfD\dbl\zeta\dbr$, hence ind-separated and of ind-finite type over $\wh\TTT$. If the fibers of $\BP$ over $\BD$ are connected then $\ul\CM_{\ul\CL_0}$ is ind-projective if and only if $\BP$ is parahoric in the sense of Bruhat and Tits \cite[D\'efinition~5.2.6]{B-T} and \cite{H-R}.

If $\ul\CL_0$ is trivialized by an isomorphism $\alpha\colon\ul\CL_0\isoto\bigl((L^+\BP)_{\bar \TTT},b\hat\sigma^*\bigr)$ over $\bar\TTT$ with $b\in L\genericG(\bar \TTT)$ then $\ul\CM_{\ul\CL_0}$ is represented by the ind-scheme $\wh\SpaceFl_{\BP,\wh\TTT}:=\SpaceFl_\BP\whtimes_\BaseOfD\wh \TTT$.
\end{theorem}

\begin{proof}
We first assume that $\ul\CL_0$ is trivialized by an isomorphism $\alpha$. We regard $\ul{\CM}_{\ul{\CL}_0}$ in the equivalent form mentioned in Remark~\ref{RZ_space_remark}. Consider a pair $(\ul\CL, \delta)=((\CL_+,\tauLoc),\delta)\in\ul{\CM}_{\ul{\CL}_0}(S)$. Choose an \fppf-covering $S'\to S$ which trivializes $\ul\CL$, then the quasi-isogeny $\alpha_{S'}\circ\delta$ is given by an element $g'\in L\genericG(S')$. The image of the element $g'\in L\genericG(S')$ in $\wh{\SpaceFl}_{\BP,\wh\TTT}(S')$ is independent of the choice of the trivialization, and since $(\ul \CL,\delta)$ is defined over $S$, it descends to a point $x \in \wh{\SpaceFl}_{\BP,\wh\TTT}(S)$.
Note in particular that $\tauLoc_{S'}$ is determined by $b$ and $g'$ through the diagram
$$
\CD
\hat{\sigma}^\ast (L\genericG)_{S'}@>{\tauLoc_{S'}}>>(L\genericG)_{S'}\\
@V{\hat{\sigma}^\ast(g')}VV @VV{g'}V\\
\hat{\sigma}^\ast (L\genericG)_{S'}@>b>>(L\genericG)_{S'}.
\endCD
$$

\bigskip

Conversely let $x\in\wh{\SpaceFl}_{\BP,\wh\TTT}(S)$ for a scheme $S$ in $\Nilp_{\wh\TTT}$. The projection morphism $L\genericG \to \SpaceFl_\BP$ admits local sections for the \'etale topology by \cite[Theorem~1.4]{PR2}. Consequently there is an \'etale covering $S'\to S$ such that $x$ is represented by an element $g'\in L\genericG(S')$. We set $S'':=S'\times_SS'$ and define $(\CL_+',\tauLoc',\ppsi')$ over $S'$ as follows. Let $\CL'_+:=(L^+\BP)_{S'}$, let the quasi-isogeny $\ppsi'\colon (\CL_+',\tauLoc')\to\bigl((L^+\BP)_{S'},b\hat\sigma^*\bigr)$ be given by $y\mapsto g'y$, and the Frobenius by $\tauLoc':=(g')^{-1} b\hat\sigma^*(g')\hat\sigma^*$.
We descend $(\CL_+',\tauLoc',\ppsi')$ to $S$. For an $S$-scheme $\Test$ let $\Test'=\Test\times_SS'$ and $\Test''=\Test'\times_\Test\Test'=\Test\times_SS''$, and let $p_i\colon \Test''\to \Test'$ be the projection onto the $i$-th factor. Since $g'$ comes from an element $x\in\wh{\SpaceFl}_{\BP,\wh\TTT}(S)$ there is an $h\in L^+\BP(S'')$ with $p_1^\ast(g')=p_2^\ast(g')\cdot h$. Consider the \fpqc-sheaf $\CL_+$ on $S$ whose sections over an $S$-scheme $\Test$ are given by
\[
\CL_+(\Test)\;:=\;\bigl\{\,y'\in L^+\BP(\Test')\colon \es p_1^\ast(y')=h^{-1}\cdot p_2^\ast(y')\text{ in } L^+\BP(\Test'')\,\bigr\}
\]
on which $L^+\BP(\Test)$ acts by right multiplication. Then $\ul\CL$ is an $L^+\BP$-torsor on $S$ because over $\Test=S'$ there is a trivialization
\[
(L^+\BP)_{S'}\isoto (\CL_+)_{S'}\,,\quad f\mapsto h\,p_1^\ast(f)\es\in\;(L^+\BP)(S'')
\]
due to the cocycle condition for $h$. Moreover, $\tauLoc'$ descends to an isomorphism 
$$
\tauLoc\colon \hat\sigma^*\CL(\Test)\isoto\CL(\Test)\,,\,\hat\sigma^*(y')\mapsto(g')^{-1} b\hat\sigma^*(g')\hat\sigma^*(y')
$$ 
making $(\CL_+,\tauLoc)$ into a local $\BP$-shtuka over $S$. Also $\delta'$ descends to a quasi-isogeny of local $\BP$-shtukas
\begin{eqnarray*}
\delta\colon \CL(\Test) &\ \isoto &\ L\genericG (\Test)=\bigl\{\,f'\in L\genericG(\Test')\colon\;p_1^\ast(f')=p_2^\ast(f')\text{ in }L\genericG(\Test'')\,\bigr\}\;,\quad \\
y' &\ \mapsto &\ g'y'\,.
\end{eqnarray*}
Note that this is well defined. Namely, if $g'$ is replaced by $\tilde g'$ with $u'=(\tilde g')^{-1}g'\in L^+\BP(S')$ then left multiplication with $u'$ defines an isomorphism 
$$
\bigl((L^+\BP)_{S'},(g')^{-1} b\hat\sigma^*(g')\hat\sigma^*,g'\bigr)\isoto\bigl((L^+\BP)_{S'},(\tilde g')^{-1} b\hat\sigma^*(\tilde g')\hat\sigma^*,\tilde g'\bigr).
$$ 
Also $\tilde h=p_2^\ast(u')\,h\,p_1^\ast(u')^{-1}$ and hence left multiplication with $u'$ descends to an isomorphism $\ul\CL\isoto\ul{\wt\CL}$ over $S$. This establishes the last statement of the theorem.

\medskip\noindent
To prove the first assertion we may choose an \fppf-covering $\bar\TTT'\to\bar\TTT$ and a trivialization $\alpha\colon\ul\CL_{0,\bar\TTT'}\isoto\bigl((L^+\BP)_{\bar \TTT'},b\hat\sigma^*\bigr)$ over $\bar\TTT'$. We set $\wh\TTT':=\bar\TTT'\whtimes_{\BaseOfD}\Spf\BaseOfD\dbl\zeta\dbr$ and $\wh\TTT'':=\wh\TTT'\whtimes_{\wh\TTT}\wh\TTT'$, and let $pr_i\colon\wh\TTT''\to\wh\TTT'$ be the projection onto the $i$-th factor. By what we have proved above, we obtain an isomorphism 
$$\ul\CM_{\ul\CL_0}\whtimes_{\wh\TTT}\wh\TTT'\isoto\wh{\SpaceFl}_{\BP,\wh\TTT'},\,\delta\mapsto\alpha_{S'}\circ\delta=g'\cdot L^+\BP(S')\,.$$ 
Note that $\wh{\SpaceFl}_{\BP,\wh\TTT'}$ is an ind-scheme which is ind-quasi-projective over $\wh\TTT'$ by Remark~\ref{Flagisquasiproj} and, when $\BP$ has connected fibers, even ind-projective if and only if $\BP$ is parahoric. Over $\wh\TTT''$ there is an isomorphism
\begin{equation}\label{EqDescent}
\xymatrix @R=0pc {
pr_1^*(\wh{\SpaceFl}_{\BP,\wh\TTT'}) \ar[r]^\cong & \ul\CM_{\ul\CL_0}\whtimes_{\wh\TTT}\wh\TTT'' \ar[r]^{\cong\qquad\qquad} & pr_2^*(\wh{\SpaceFl}_{\BP,\wh\TTT'})\qquad\qquad\qquad\\
g'\cdot L^+\BP(S') \ar@{|->}[r] & pr_1^*\alpha_{S'}^{-1}\circ g' \ar@{|->}[r] & pr_2^*\alpha_{S'}\circ pr_1^*\alpha_{S'}^{-1}\circ g'\cdot L^+\BP(S')
}
\end{equation}
It is given by left multiplication on $pr_1^*(\wh{\SpaceFl}_{\BP,\wh\TTT'})=\wh{\SpaceFl}_{\BP,\wh\TTT''}=pr_2^*(\wh{\SpaceFl}_{\BP,\wh\TTT'})$ with the element $pr_2^*\alpha_{S'}\circ pr_1^*\alpha_{S'}^{-1}\in L^+\BP(\wh\TTT'')$. 

We write $\SpaceFl_\BP=\dirlim\SpaceFl_\BP^{(N)}$ for quasi-projective $\BaseOfD$-schemes $\SpaceFl_\BP^{(N)}$. There is a line bundle $\CF$ on $\SpaceFl_{\BP}$ which is ``ample'' in the sense that its restriction to any $\SpaceFl_\BP^{(N)}$ is ample, and which is equivariant for the $L^+\BP$-action by left multiplication, i.e.\ ``$L^+\BP$-linearized'' in the sense of \cite[Definition~1.6]{GIT}. For example one can take a faithful representation $\BP\into\SL_{r,\BD}$ with quasi-affine quotient (Remark~\ref{Flagisquasiproj}), take the fundamental line bundles $\CL_i$ on $\SpaceFl_{\SL_r}$ from \cite[p.~46]{Faltings03}, take $\CF$ as the pullback of a tensor product of strictly positive powers of the $\CL_i$, see \cite[p.~54]{Faltings03}, and take the $\SpaceFl_\BP^{(N)}$ as the preimages in $\SpaceFl_\BP$ of the Schubert varieties in $\SpaceFl_{\SL_r}$. Then \eqref{EqDescent} defines a descent datum on the pair $(\SpaceFl_\BP^{(N)}\times_\BaseOfD\bar\TTT'\times_\BaseOfD\BaseOfD\dbl\zeta\dbr/(\zeta^m),\,\CF)$ for all $N$ and $m$. This descent datum is effective by \cite[VIII, Proposition~7.8]{SGA1}, see also \cite[\S\,6.1, Theorem~7]{BLR}. Therefore there is a quasi-projective scheme $\ul\CM_{\ul\CL_0}^{(N,m)}$ over $\bar\TTT\times_\BaseOfD\BaseOfD\dbl\zeta\dbr/(\zeta^m)$ with $\ul\CM_{\ul\CL_0}^{(N,m)}\times_{\bar\TTT}\bar\TTT'\,\cong\,\SpaceFl_\BP^{(N)}\times_\BaseOfD\bar\TTT'\times_\BaseOfD\BaseOfD\dbl\zeta\dbr/(\zeta^m)$. Moreover, if the fibers of $\BP$ are connected then $\ul\CM_{\ul\CL_0}^{(N,m)}$ is projective if and only if $\BP$ is parahoric by Remark~\ref{Flagisquasiproj}. It follows that $\ul\CM_{\ul\CL_0}=\dirlim[N,m]\ul\CM_{\ul\CL_0}^{(N,m)}$ is an ind-(quasi-)projective ind-scheme over $\hat\TTT$.
\end{proof}

\subsection{Bounded Local \texorpdfstring{$\BP$}{P}-Shtukas}\label{BC}

We want to introduce boundedness conditions for local $\BP$-shtukas where $\BP$ is again a smooth affine group scheme over $\BD$. Due to the problem discussed in Example~\ref{ExampleBd} below we will base our boundedness conditions on an axiomatic definition of ``bounds''. For this purpose we fix an algebraic closure $\BaseOfD\dpl\zeta\dpr^\alg$ of $\BaseOfD\dpl\zeta\dpr$. Since its ring of integers is not complete we prefer to work with finite extensions of discrete valuation rings $R/\BaseOfD\dbl\zeta\dbr$ such that $R\subset\BaseOfD\dpl\zeta\dpr^\alg$. For such a ring $R$ we denote by $\kappa_R$ its residue field, and we let $\Nilp_R$ be the category of $R$-schemes on which $\zeta$ is locally nilpotent. We also set $\wh{\SpaceFl}_{\BP,R}:=\SpaceFl_\BP\whtimes_{\BaseOfD}\Spf R$ and $\wh{\SpaceFl}_\BP:=\wh{\SpaceFl}_{\BP,\BaseOfD\dbl\zeta\dbr}$. Before we can define ``bounds'' we need to make the following observations.

\begin{definition}\label{DefEqClClosedInd}
(a) For a finite extension $\BaseOfD\dbl\zeta\dbr\subset R\subset\BaseOfD\dpl\zeta\dpr^\alg$ of discrete valuation rings we consider closed ind-subschemes $\hat{Z}_R\subset\wh{\SpaceFl}_{\BP,R}$. We call two closed ind-subschemes $\hat{Z}_R\subset\wh{\SpaceFl}_{\BP,R}$ and $\hat{Z}'_{R'}\subset\wh{\SpaceFl}_{\BP,R'}$ \emph{equivalent} if there is a finite extension of discrete valuation rings $\BaseOfD\dbl\zeta\dbr\subset\wt R\subset\BaseOfD\dpl\zeta\dpr^\alg$ containing $R$ and $R'$ such that $\hat{Z}_R\whtimes_{\Spf R}\Spf\wt R \,=\,\hat{Z}'_{R'}\whtimes_{\Spf R'}\Spf\wt R$ as closed ind-subschemes of $\wh{\SpaceFl}_{\BP,\wt R}$.

\medskip\noindent
(b) Let $\hat{Z}=[\hat{Z}_R]$ be an equivalence class of closed ind-subschemes $\hat{Z}_R\subset\wh{\SpaceFl}_{\BP,R}$ and consider the group $G_{\hat{Z}}:=\{\gamma\in\Aut_{\BaseOfD\dbl\zeta\dbr}(\BaseOfD\dpl\zeta\dpr^\alg)\colon \gamma(\hat{Z})=\hat{Z}\,\}$. We define the \emph{ring of definition $R_{\hat{Z}}$ of $\hat{Z}$} as the intersection of the fixed field of $G_{\hat{Z}}$ in $\BaseOfD\dpl\zeta\dpr^\alg$ with all the finite extensions $R\subset\BaseOfD\dpl\zeta\dpr^\alg$ of $\BaseOfD\dbl\zeta\dbr$ over which a representative $\hat{Z}_R$ of $\hat{Z}$ exists.
\end{definition}

Let us give some explanations for this definition.

\begin{remark}[about Definition~\ref{DefEqClClosedInd}(a)]\label{RemEqClClosedInd}
Consider an ind-scheme structure on $\SpaceFl_\BP$ given as an inductive limit $\SpaceFl_\BP=\dirlim \SpaceFl_\BP^{(m)}$ of quasi-compact $\BaseOfD$-schemes $\SpaceFl_\BP^{(m)}$ indexed by $m\in\BN$. Then $\wh{\SpaceFl}_{\BP,R}= \dirlim \bigl(\SpaceFl_\BP^{(m)}\times_{\BaseOfD}\Spec R/(\zeta^m)\bigr)$ is an ind-scheme structure on $\wh{\SpaceFl}_{\BP,R}$. A closed ind-subscheme $\hat{Z}_R\subset\wh{\SpaceFl}_{\BP,R}$ is of the form $\hat{Z}_R=\dirlim\hat{Z}_{R,m}$ for an inductive system of closed subschemes $\hat{Z}_{R,m}\subset \SpaceFl_\BP^{(m)}\times_{\BaseOfD}\Spec R/(\zeta^m)$. The latter correspond to sheaves of ideals $\CI_m\subset \CO_{\SpaceFl_\BP^{(m)}}\otimes_{\BaseOfD}R/(\zeta^m)$ for all $m$.

If $R\subset\wt R$ and $\hat{Z}_{R}\subset\wh{\SpaceFl}_{\BP,R}$ is a closed ind-subscheme then $\hat{Z}_{\wt R}:=\hat{Z}_R\whtimes_{\Spf R}\Spf\wt R\subset\wh{\SpaceFl}_{\BP,\wt R}$ is a closed ind-subscheme, such that $\hat{Z}_R$ is the ind-scheme theoretic image of $\hat{Z}_{\wt R}$ in $\wh{\SpaceFl}_{\BP,R}$. Indeed, in terms of the ideal sheaves $\CI_m\subset \CO_{\SpaceFl_\BP^{(m)}}\otimes_{\BaseOfD}R/(\zeta^m)$ of $\hat{Z}_R$ and $\wt\CI_m\subset \CO_{\SpaceFl_\BP^{(m)}}\otimes_{\BaseOfD}\wt R/(\zeta^m)$ of $\hat{Z}_{\wt R}$ this means $\wt\CI_m=\CI_m\otimes_R\wt R$ and $\CI_m=\wt\CI_m\cap\CO_{\SpaceFl_\BP^{(m)}}\otimes_{\BaseOfD}R/(\zeta^m)$. The latter equality follows from the commutative diagram
\[
\xymatrix @R-1pc{
0 \ar[r] & **{!U(0.1)} \CI_m \ar[r] \ar@{^{ (}->}[d] & **{!D(0.1)} \CI_m\otimes_R\wt R \ar[r] \ar@{^{ (}->}[d] & **{!D(0.1)} \CI_m\otimes_R\wt R/R \ar[r] \ar@{^{ (}->}[d] & 0\,\;\\
0 \ar[r] & **{!U(0.3)} \CO_{\SpaceFl_\BP^{(m)}}\otimes_{\BaseOfD}R/(\zeta^m) \ar[r] & **{!U(0.2)} \CO_{\SpaceFl_\BP^{(m)}}\otimes_{\BaseOfD}\wt R/(\zeta^m) \ar[r] & **{!U(0.2)} \CO_{\SpaceFl_\BP^{(m)}}\otimes_{\BaseOfD}R/(\zeta^m)\otimes_R\wt R/R \ar[r] & 0\,,
}
\]
in which the rows are exact and the vertical morphisms are injective because $\wt R/R$ and $\wt R$ are finite free $R$-modules.

It follows that two closed ind-subschemes $\hat{Z}_R\subset\wh{\SpaceFl}_{\BP,R}$ and $\hat{Z}'_{R'}\subset\wh{\SpaceFl}_{\BP,R'}$ are equivalent if and only if $\hat{Z}_R\whtimes_{\Spf R}\Spf\wt R \,=\,\hat{Z}'_{R'}\whtimes_{\Spf R'}\Spf\wt R$ for \emph{every} finite extension of discrete valuation rings $\BaseOfD\dbl\zeta\dbr\subset\wt R\subset\BaseOfD\dpl\zeta\dpr^\alg$ containing $R$ and $R'$.

Another consequence is, that a morphism $f\colon S\to\wh{\SpaceFl}_{\BP,R}$ for $S\in\Nilp_R$ factors through $\hat{Z}_R$ if and only if the morphism $f\times\id_{\wt R}\colon S\whtimes_R\Spf\wt R\to\wh{\SpaceFl}_{\BP,\wt R}$ factors through $\hat{Z}_{\wt R}$. Indeed, this can be checked by the vanishing of the ideals $f^*\CI_m$, respectively $(f\times\id_{\wt R})^*\wt\CI_m$, using the injectivity $\CO_S\into\CO_S\otimes_R\wt R$.
\end{remark}

\begin{remark}[about Definition~\ref{DefEqClClosedInd}(b)]\label{RemReflexRing}
Let $R\subset\BaseOfD\dpl\zeta\dpr^\alg$ be a finite extension of $\BaseOfD\dbl\zeta\dbr$ over which a representative $\hat{Z}_R$ of $\hat{Z}$ exists.
\begin{enumerate}
\item \label{RemReflexRing_A}
$\gamma(\hat{Z})=\hat{Z}$ then means that $\gamma(\hat{Z}_R)\subset\wh{\SpaceFl}_{\BP,\gamma(R)}$ is equivalent to $\hat{Z}_R$. In particular, if $\gamma(R)=R$ then, by our previous remark, $\gamma(\hat{Z})=\hat{Z}$ means that $\gamma(\hat{Z}_R)=\hat{Z}_R$. For example if $\Quot(R)$ is a normal field extension of $\BaseOfD\dpl\zeta\dpr$ then $\gamma(R)=R$.
\item \label{RemReflexRing_B}
It follows that $\Aut_R(\BaseOfD\dpl\zeta\dpr^\alg)\subset G_{\hat{Z}}$ because all $R$-automorphisms fix $\hat{Z}_R$.
\item \label{RemReflexRing_C}
We let $R^{G_{\hat{Z}}}:=\{x\in R\colon\gamma(x)=x\text{ for all }\gamma\in G_{\hat{Z}}\,\}$. It equals the intersection of $R$ with the fixed field of $G_{\hat{Z}}$ in $\BaseOfD\dpl\zeta\dpr^\alg$.
\item \label{RemReflexRing_D}
Let $i(R):=[\Quot(R):\BaseOfD\dpl\zeta\dpr]_{\rm insep}$ be the inseparability degree. Then $\BaseOfD\dbl\!\sqrt[i(R)]{\zeta}\,\dbr\subset R^{G_{\hat{Z}}}$ and $R^{G_{\hat{Z}}}$ equals the ring of integers $\CO_K$ in the fixed field $K$ of $G_{\hat{Z}}$ inside the separable closure $\BaseOfD\dpl\!\sqrt[i(R)]{\zeta}\,\dpr^\sep$ of $\BaseOfD\dpl\!\sqrt[i(R)]{\zeta}\,\dpr$. To prove this we use the fact from field theory, that $\BaseOfD\dpl\!\sqrt[i(R)]{\zeta}\,\dpr$ is contained in $\Quot(R)$ and that this is a separable extension. In particular $\Quot(R)\subset\BaseOfD\dpl\!\sqrt[i(R)]{\zeta}\,\dpr^\sep$ and $R^{G_{\hat{Z}}}\subset\CO_K$. From \ref{RemReflexRing_B} it follows that $K\subset\Quot(R)$, and hence $\CO_K\subset R^{G_{\hat{Z}}}$. Finally, since $\Aut_{\BaseOfD\dbl\zeta\dbr}(\BaseOfD\dpl\zeta\dpr^\alg)$ fixes all elements of $\BaseOfD\dpl\!\sqrt[i(R)]{\zeta}\,\dpr$ we find $\BaseOfD\dbl\!\sqrt[i(R)]{\zeta}\,\dbr\subset R^{G_{\hat{Z}}}$.
\item \label{RemReflexRing_E}
If $R'$ is another finite extension of $\BaseOfD\dbl\zeta\dbr$ over which a representative $\hat{Z}_{R'}$ of $\hat{Z}$ exists, such that $i(R)\le i(R')$, then $R^{G_{\hat{Z}}}\subset (R')^{G_{\hat{Z}}}$ by \ref{RemReflexRing_D}. If $i(R)=i(R')$ then $R^{G_{\hat{Z}}}=(R')^{G_{\hat{Z}}}$.
\item \label{RemReflexRing_F}
We conclude from \ref{RemReflexRing_E} that the ring of definition of $\hat{Z}$ may be computed as follows. We choose a finite extension $R\subset\BaseOfD\dpl\zeta\dpr^\alg$ of $\BaseOfD\dbl\zeta\dbr$ over which a representative $\hat{Z}_R$ of $\hat{Z}$ exists, and for which $i(R)$ is minimal. Then $R_{\hat{Z}}=R^{G_{\hat{Z}}}$. Moreover, let $\wt R$ be the ring of integers in the normal closure of $\Quot(R)$ over $\BaseOfD\dpl\zeta\dpr$. Then $i(\wt R) = i(R)$ and therefore $\Quot(\wt R)$ is Galois over $\Quot(R_{\hat{Z}})$ with Galois group \mbox{$\Aut_{R_{\hat{Z}}}(\wt R)\,=\,\{\gamma\in\Aut_{\BaseOfD\dbl\zeta\dbr}(\wt R)\text{ with }\gamma(\hat{Z}_{\wt R})=\hat{Z}_{\wt R}\}\,\subset\,\Aut_{\BaseOfD\dbl\zeta\dbr}(\wt R)$}. We conclude that
\[
R_{\hat{Z}}\;=\;\bigl\{x\in{\wt R}\colon\gamma(x)=x\text{ for all }\gamma\in\Aut_{\BaseOfD\dbl\zeta\dbr}(\wt R)\text{ with }\gamma(\hat{Z}_{\wt R})=\hat{Z}_{\wt R}\bigr\}\,.
\]
\item 
We do not know whether in general $\hat{Z}$ has a representative $\hat{Z}_{R_{\hat{Z}}}$ over the ring of definition $R_{\hat{Z}}$, although this is true in many cases; see our Examples~\ref{ExampleSchubert} to \ref{ExampleBd}.
\item 
Note that our Definition~\ref{DefEqClClosedInd}(b) is a direct translation of the analogous situation over number fields, taking the inseparability problem into account. Namely in the number field case one considers cocharacters $\mu\colon\BG_{m,\BQ_p^\alg}\to G_{\BQ_p^\alg}$ for a reductive group $G$ over $\BQ_p$, and one considers a conjugacy class $\CC(\mu)=\{\Int_g\circ\mu\colon g\in G(\BQ_p^\alg)\}$; see \cite[3.7]{DeligneTravauxDeShimura} or \cite[1.31]{RZ}. (Our Example~\ref{ExampleHV} corresponds to this.) One defines the \emph{field of definition $E_\mu$ of $\CC(\mu)$} as the fixed field inside $\BQ_p^\alg$ of \mbox{$\{\gamma\in\Gal(\BQ_p^\alg/\BQ_p)\colon\gamma(\CC(\mu)):=\CC(\gamma(\mu))=\CC(\mu)\}$}. The field of definition $E_\mu$ is automatically contained in every field over which a representative of $\CC(\mu)$ exists. Our above discussion applies mutatis mutandis. If the group $G$ is quasi-split over $E_\mu$, Kottwitz \cite[Lemma~1.1.3]{Kottwitz84} proved that $\CC(\mu)$ has a representative over $E_\mu$.
\end{enumerate}
\end{remark}

After these preparatory observations we finally come to the announced

\begin{definition}\label{DefBDLocal}
\begin{enumerate}
\item \label{DefBDLocal_A}
We define a \emph{bound} to be an equivalence class $\hat{Z}:=[\hat{Z}_R]$ of closed ind-subschemes $\hat{Z}_R\subset\wh{\SpaceFl}_{\BP,R}$, such that all the ind-subschemes $\hat{Z}_R$ are stable under the left $L^+\BP$-action on $\SpaceFl_\BP$, and the special fibers $Z_R:=\hat{Z}_R\whtimes_{\Spf R}\Spec\kappa_R$ are quasi-compact subschemes of $\SpaceFl_\BP\whtimes_{\BaseOfD}\Spec\kappa_R$. The ring of definition $R_{\hat{Z}}$ of $\hat{Z}$ is called the \emph{reflex ring} of $\hat{Z}$. Since the Galois descent for closed ind-subschemes of $\SpaceFl_\BP$ is effective, the $Z_R$ arise by base change from a unique closed subscheme $Z\subset\SpaceFl_\BP\whtimes_\BaseOfD\kappa_{R_{\hat{Z}}}$. We call $Z$ the \emph{special fiber} of the bound $\hat{Z}$. It is a quasi-projective scheme over $\kappa_{R_{\hat{Z}}}$ by Remark~\ref{Flagisquasiproj} and \cite[Lemma~5.4]{H-V} which implies that every morphism from a quasi-compact scheme to an ind-quasi-projective ind-scheme factors through a quasi-projective subscheme. If $\BP$ is parahoric in the sense of Bruhat and Tits \cite[D\'efinition~5.2.6]{B-T} and \cite{H-R} then $Z$ is projective.
\item \label{DefBDLocal_B}
Let $\hat{Z}$ be a bound with reflex ring $R_{\hat{Z}}$. Let $\CL_+$ and $\CL_+'$ be $L^+\BP$-torsors over a scheme $S$ in $\Nilp_{R_{\hat{Z}}}$ and let $\delta\colon \CL\isoto\CL'$ be an isomorphism of the associated $L\genericG$-torsors. We consider an \fppf-covering $S'\to S$ over which trivializations $\alpha\colon\CL_+\isoto(L^+\BP)_{S'}$ and $\alpha'\colon\CL'_+\isoto(L^+\BP)_{S'}$ exist. Then the automorphism $\alpha'\circ\delta\circ\alpha^{-1}$ of $(L\genericG)_{S'}$ corresponds to a morphism $S'\to L\genericG\whtimes_\BaseOfD\Spf R_{\hat{Z}}$. We say that $\delta$ is \emph{bounded by $\hat{Z}$} if for every such trivialization and for every finite extension $R$ of $\BaseOfD\dbl\zeta\dbr$ over which a representative $\hat{Z}_R$ of $\hat{Z}$ exists the induced morphism 
\[
S'\whtimes_{R_{\hat{Z}}}\Spf R\to L\genericG\whtimes_\BaseOfD\Spf R\to \wh{\SpaceFl}_{\BP,R}
\]
 factors through $\hat{Z}_R$. Furthermore we say that a local $\BP$-shtuka $(\CL_+, \tauLoc)$ is \emph{bounded by $\hat{Z}$} if the isomorphism $\tauLoc$ is bounded by $\hat{Z}$. 
\end{enumerate}
\end{definition}

\begin{remark}\label{RemBound}
The condition of Definition~\ref{DefBDLocal}\ref{DefBDLocal_B} is satisfied for \emph{all} trivializations and for \emph{all} such finite extensions $R$ of $\BaseOfD\dbl\zeta\dbr$ if and only if it is satisfied for \emph{one} trivialization and for \emph{one} such finite extension. Indeed, by the $L^+\BP$-invariance of $\hat Z$ the definition is independent of the trivializations. That one finite extension suffices follows from Remark~\ref{RemEqClClosedInd}. 
\end{remark}

\medskip

In Examples~\ref{ExampleSchubert} to \ref{ExampleBd} below we discuss the motivation for this definition and the relation to other boundedness conditions like in \cite{H-V}. Note that the definition of ``bounds'' given above suffices for our purposes in this article and in \cite{AH_Unif}. For other purposes one may need more restrictive hypotheses on bounds; see for example~\cite[Section~2]{HV3}. 

\begin{remark}\label{RemBdIsFormSch}
Let the ind-scheme structure on $\SpaceFl_\BP$ be given as the limit $\SpaceFl_\BP=\dirlim\SpaceFl_\BP^{(m)}$ and the one on $\wh\SpaceFl_{\BP,R}$ as $\wh\SpaceFl_{\BP,R}=\dirlim\SpaceFl_\BP^{(m)}\times_\BaseOfD\Spec R/(\zeta^m)$. Let $\hat Z_R^{(m)}:=\hat Z_R\whtimes_{\wh\SpaceFl_{\BP,R}}(\SpaceFl_\BP^{(m)}\times_\BaseOfD\Spec R/(\zeta^m))$. Then $\hat Z_R=\dirlim\hat Z_R^{(m)}$ and $\hat Z_{R,\red}:=\dirlim(\hat Z_R^{(m)})_\red=\dirlim(\hat Z_R\whtimes_{\Spf R}\Spec\kappa_R\times_{\SpaceFl_\BP}\SpaceFl_\BP^{(m)})_\red=(Z_R)_\red$ is a scheme. This means that $\hat Z_R$ is a ``reasonable formal scheme'' over $\Spf R$ in the sense of \cite[7.11.1 and 7.12.17]{B-D}, and hence a formal scheme in the sense of \cite[I$_{\rm new}$]{EGA}.
\end{remark}

\begin{proposition}\label{PropBoundedClosed}
Let $\hat Z$ be a bound with reflex ring $R_{\hat{Z}}$. Let $\CL_+$ and $\CL'_+$ be $L^+\BP$-torsors over a scheme $S\in\Nilp_{R_{\hat{Z}}}$ and let $\delta\colon \CL\isoto\CL'$ be an isomorphism of the associated $L\genericG$-torsors. Then the condition that $\delta$ is bounded by $\hat Z$ is represented by a closed subscheme of $S$.
\end{proposition}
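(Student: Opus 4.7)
The plan is to reduce, via étale descent and the $L^+\BP$-invariance of $\hat Z$, to the case where both torsors are trivial, and then to exploit the ind-scheme structure of $\wh{\SpaceFl}_\BP$ level by level. First I would choose an étale covering $S' \to S$ over which both $\CL_+$ and $\CL_+'$ become trivial (possible by Proposition~\ref{formal torsor prop}). Then $\delta$ corresponds to an element $g \in L\genericG(S')$ and the associated morphism $\bar\phi\colon S' \to \wh{\SpaceFl}_\BP$ is, by the $L^+\BP$-invariance of $\hat Z$, intrinsically defined independent of the choice of trivializations; in particular the question whether $\bar\phi$ factors through $\hat Z$ is well-posed. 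The task then reduces to showing that $(\bar\phi)^{-1}(\hat Z) \subseteq S'$ is a closed subscheme.

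For this, write $\SpaceFl_\BP = \dirlim_m \SpaceFl_\BP^{(m)}$ with the quasi-projective $\BaseOfD$-schemes $\SpaceFl_\BP^{(m)}$ from Remark~\ref{Flagisquasiproj}, and set $Y_{m,n} := \SpaceFl_\BP^{(m)} \times_\BaseOfD \Spec\BaseOfD\dbl\zeta\dbr/(\zeta^n)$, so that $\wh\SpaceFl_\BP = \dirlim_{m,n} Y_{m,n}$. By the definition of a bound, $Z_{m,n} := \hat Z \cap Y_{m,n}$ is a closed subscheme of $Y_{m,n}$. The condition is Zariski-local on $S'$: for a quasi-compact open $U \subseteq S'$ we have $\zeta^n = 0$ on $U$ for some $n$ since $S \in \Nilp_{\BaseOfD\dbl\zeta\dbr}$, so $\bar\phi|_U$ factors through $\SpaceFl_\BP \times_\BaseOfD \Spec\BaseOfD\dbl\zeta\dbr/(\zeta^n)$, and by quasi-compactness of $U$ further through some $Y_{m,n}$. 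The preimage $U \times_{Y_{m,n}} Z_{m,n}$ is then a closed subscheme of $U$, independent of the chosen $m$ by the compatibility of the $Z_{m,n} \subset Y_{m,n}$ for varying $m$. These local closed subschemes glue to a closed subscheme $S_0' \subseteq S'$ representing the condition over $S'$.

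Finally, by the $L^+\BP$-invariance of $\hat Z$, the subscheme $S_0'$ is stable under the two descent projections $S' \times_S S' \rightrightarrows S'$, and hence descends to a closed subscheme $S_0 \subseteq S$ by fpqc descent for closed immersions \cite[\S6.1, Theorem~6]{BLR}, representing the boundedness condition on $S$. The main point requiring care is relating the ind-scheme structure on $\wh\SpaceFl_\BP$ to ordinary scheme-theoretic closedness; I do not expect a genuine obstacle here, since quasi-compactness on affine opens of $S$ (together with the local nilpotence of $\zeta$) reduces everything to a single stage $Y_{m,n}$, where $\hat Z \cap Y_{m,n}$ is closed in the usual sense.
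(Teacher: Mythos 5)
Your proposal is correct and follows the same route as the paper's (much terser) proof: trivialize both torsors over an étale covering $S'\to S$, form the closed subscheme $S'\whtimes_{\wh{\SpaceFl}_\BP}\hat Z$ of $S'$, and descend it to $S$ using the $L^+\BP$-invariance of $\hat Z$. The extra details you supply — reducing to a single stage $Y_{m,n}$ of the ind-system via quasi-compactness and the local nilpotence of $\zeta$ — are exactly the points the paper leaves implicit, and they are handled correctly.
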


\begin{proof}
We consider a representative $\hat{Z}_R$ of the bound $\hat{Z}$ over a finite exten\-sion $R_{\hat{Z}}\subset R\subset\BaseOfD\dpl\zeta\dpr^\alg$. As in Definition~\ref{DefBDLocal} we consider trivializations of $\CL_+$ and $\CL'_+$ over an \fppf-covering $S'\to S$ and the induced morphism \mbox{$S'\whtimes_{R_{\hat{Z}}}\Spf R\to L\genericG\whtimes_\BaseOfD\Spf R\to \wh{\SpaceFl}_{\BP,R}$}. Due to the $L^+\BP$-invariance of $\hat Z_R$ the closed subscheme $S'\whtimes_{\wh{\SpaceFl}_{\BP,R}}\hat Z_R$ of $S'$ descends to a closed subscheme of $S$. By Remark~\ref{RemBound} this closed subscheme represents the boundedness by $\hat{Z}$.
\end{proof}

\begin{example}\label{ExampleSchubert}
Assume that $\BP$ is parahoric in the sense of Bruhat and Tits \cite[D\'efinition~5.2.6]{B-T} and \cite{H-R}; see Remark~\ref{Flagisquasiproj}. Consider the base change $\genericG_L$ of $\genericG$ to $L=\BaseOfD^\alg\dpl z\dpr$. Let $A$ be a maximal split torus in $\genericG_L$ and let $T$ be its centralizer. Since $\BaseOfD^\alg$ is algebraically closed, $\genericG_L$ is quasi-split by \cite[\S\,II.2.3, Th\'eor\`eme~$1'$ and Remarque~1, p.~140]{SerreCohoGal} and so $T$ is a maximal torus in $\genericG_L$. Let $N = N(T)$ be the normalizer of $T$ and let $\CT^0$ be the identity component of the N\'eron model of $T$ over $\CO_L=\BaseOfD^\alg\dbl z\dbr$.

The \emph{Iwahori--Weyl group} associated with $A$ is the quotient group $\wt{W}= N(L)\slash\CT^0(\CO_L)$. It is an extension of the finite Weyl group $W_0 = N(L)/T(L)$ by the coinvariants $X_\ast(T)_I$ under $I=\Gal(L^\sep/L)$:
$$
0 \to X_\ast(T)_I \to \wt W \to W_0 \to 1.
$$
By \cite[Proposition~8]{H-R} there is a bijection
\begin{equation}\label{EqSchubertCell}
L^+\BP(\BaseOfD^\alg)\backslash L\genericG(\BaseOfD^\alg)/L^+\BP(\BaseOfD^\alg) \isoto \wt{W}^\BP  \backslash \wt{W}\slash \wt{W}^\BP
\end{equation}
where $\wt{W}^\BP := (N(L)\cap \BP(\CO_L))\slash \CT^0(\CO_L)$.

Let $\omega\in \wt{W}^\BP\backslash \wt{W}/\wt{W}^\BP$ and let $\BaseOfD_\omega$ be the fixed field in $\BaseOfD^\alg$ of $\{\gamma\in\Gal(\BaseOfD^\alg/\BaseOfD)\colon \gamma(\omega)=\omega\}$. We show that $\omega$ has a representative $g_\omega\in L\genericG(\BaseOfD_\omega)$. Indeed, let $g\in L\genericG(\BaseOfD^\alg)$ be any representative of $\omega$ and let $\gamma$ be the $\BaseOfD_\omega$-Frobenius which generates $\Gal(\BaseOfD^\alg/\BaseOfD_\omega)$. Since $\gamma(\omega)=\omega$ there are elements $b_1,b_2\in L^+\BP(\BaseOfD^\alg)$ with $\gamma(g)=b_1^{-1}g\,b_2$. By Lemma~\ref{LSisnonemptyA} we find elements $c_1,c_2\in L^+\BP(\BaseOfD^\alg)$ with $b_i\,\gamma(c_i)=c_i$ for $i=1,2$. Then $g_\omega:=c_1^{-1}g\,c_2=\gamma(c_1^{-1}g\,c_2)\in L\genericG(\BaseOfD_\omega)$ is the desired representative of $\omega$ over $\BaseOfD_\omega$. Clearly, by definition of $\BaseOfD_\omega$ there are no representatives of $\omega$ over proper subfields of $\BaseOfD_\omega$.

We define the \emph{Schubert variety $\CS(\omega)$ associated with $\omega$} as the ind-scheme theoretic closure of the $L^+\BP$-orbit of $g_\omega$ in $\SpaceFl_\BP\whtimes_{\BaseOfD}\BaseOfD_\omega$. It is a reduced projective variety over $\BaseOfD_\omega$. For further details see \cite{PR2} and \cite{Richarz}. The equivalence class of $\hat{Z}_{\BaseOfD_\omega\dbl\zeta\dbr}:=\CS(\omega)\whtimes_{\BaseOfD_\omega}\Spf \BaseOfD_\omega\dbl\zeta\dbr$ defines a bound with reflex ring $\BaseOfD_\omega\dbl\zeta\dbr$. Instead of ``bounded by $[\hat{Z}_{\BaseOfD_\omega\dbl\zeta\dbr}]$'' we also say ``\emph{bounded by $\omega$}'' in this case.  
\end{example}

\begin{example}\label{ExampleHV}
In \cite{H-V}, Viehmann and the second author considered the case where $\BP=G\times_{\BaseOfD}\BD$ for a split connected reductive group $G$ over $\BaseOfD$. In this case $\wt W^\BP=W_0$ and $\wt{W}^\BP  \backslash \wt{W}\slash \wt{W}^\BP=X_*(T)$, and any element $\mu\in X_*(T)$ has a representative over $\BaseOfD_\mu=\BaseOfD$. If $\mu\in X_*(T)$ one could consider the bound $\hat Z:=\bigl[\CS(\mu)\whtimes_\BaseOfD\Spf \BaseOfD\dbl\zeta\dbr\bigr]$ as in Example~\ref{ExampleSchubert} above.

However, in \cite{H-V} we proceeded differently and instead fixed a Borel subgroup $B$ of $G$ and its opposite Borel $\olB$. We considered a finite generating system $\Lambda$ of the monoid of dominant weights $X^*(T)_\dom$, and for all $\lambda\in\Lambda$ the Weyl module $V_\lambda:=\bigl(\Ind_\olB^G(-\lambda)_\dom\bigr)\dual$. For the representation \mbox{$\rho_\lambda\colon G\to\GL(V_\lambda)$} we considered the sheaves of $\CO_S\dbl z\dbr$-modules $\rho_{\lambda*}\CL_+$ and $\rho_{\lambda*}\CL'_+$ associated in \eqref{EqPresheaf} with the $L^+\BP$-torsors $\CL_+$ and $\CL'_+$ over $S$. (For the definition of $\CO_S\dbl z\dbr$ see Chapter~\ref{GalRepSht}.) The isomorphism $\delta$ of the associated $L\genericG$-torsors corresponds to an isomorphism $\rho_{\lambda*}\delta\colon\rho_{\lambda*}\CL_+\otimes_{\CO_S\dbl z\dbr}\CO_S\dpl z\dpr\isoto\rho_{\lambda*}\CL'_+\otimes_{\CO_S\dbl z\dbr}\CO_S\dpl z\dpr$. We said in \cite[Definition~3.5]{H-V} that ``$\delta$ is bounded by $(\mu,\tilde z)$'', where $\tilde z=z$ or $\tilde z=z-\zeta$, if for all $\lambda\in\Lambda$
\begin{equation}\label{EqBoundHV1}
\rho_{\lambda*}\delta\,(\rho_{\lambda*}\CL_+)\;\subset\;\tilde z\;^{-\langle\,(-\lambda)_\dom,\mu\rangle}\,(\rho_{\lambda*}\CL_+'),
\end{equation}
and if for all geometric points $\bar s$ of $S$ the image of the isomorphism $\delta_{\bar s}$ at $\bar s$ under the isomorphism \eqref{EqSchubertCell} has the same image in $\pi_1(G)$ than $\mu$. Note that the more important case $\tilde z=z-\zeta$ is useful to define bounds on local $\BP$-shtukas, while the case $\tilde z=z$ is only useful to define bounds on quasi-isogenies between local $\BP$-shtukas. In that sense the bound $\bigl[\CS(\mu)\whtimes_\BaseOfD\Spf \BaseOfD\dbl\zeta\dbr\bigr]$ from Example~\ref{ExampleSchubert}, which corresponds to $\tilde z=z$, is not the right one to define bounds on local $\BP$-shtukas. Further note that in case $\tilde z=z-\zeta$ the term $\tilde z\;^{-\langle\,(-\lambda)_\dom,\mu\rangle}$ in \eqref{EqBoundHV1} can be viewed as the image under $(-\lambda)_\dom\colon LG(S)\to L\BG_m(S)$ of the element $\mu(z-\zeta)^{-1}\in LG(S)$, which itself is the image of $\mu(z-\zeta)^{-1}\in G\bigl(\BaseOfD\dbl\zeta,z\dbr[\tfrac{1}{z-\zeta}]\bigr)$ in $LG(S)=G\bigl(\CO_S\dbl z\dbr[\tfrac{1}{z-\zeta}]\bigr)$ using that $\zeta$ is locally nilpotent on $S$. 

In terms of Definition~\ref{DefBDLocal} the boundedness condition \eqref{EqBoundHV1} can be described as follows. Consider the universal matrix $M\in L\GL(V_\lambda)(S_\lambda)$ over the ind-scheme $S_\lambda:=L\GL(V_\lambda)\whtimes_\BaseOfD\Spf\BaseOfD\dbl\zeta\dbr$. Let $\olS_\lambda$ be the closed ind-subscheme of $S_\lambda$ defined by the condition that the matrix $\tilde z^{\,\langle\,(-\lambda)_\dom,\mu\rangle}M$ has entries in $\CO_{S_\lambda}\dbl z\dbr$, and let 
\[
\hat Z_\lambda\,:=\,\olS_\lambda/(L^+\GL(V_\lambda)\whtimes_\BaseOfD\Spf\BaseOfD\dbl\zeta\dbr)\,\subset\,\wh\SpaceFl_{\GL(V_\lambda)}\,. 
\]
Let $\mu^\#\in\pi_1(G)$ be the image of $\mu$ in the fundamental group $\pi_1(G)$ and let $(\wh\SpaceFl_\BP)_{\mu^\#}$ be the connected component of $\wh\SpaceFl_\BP$ corresponding to $\mu^\#$ under the isomorphism $\pi_0(\wh\SpaceFl_\BP)\cong\pi_1(G)$; see \cite[Proposition~4.5.4]{B-D} or \cite[Theorem~0.1]{PR2}. Write $\Lambda=\{\lambda_1,\ldots,\lambda_m\}$ and for each $\lambda_i$ consider the morphism $\rho_{\lambda_i*}\colon(\wh\SpaceFl_\BP)_{\mu^\#}\to\wh\SpaceFl_{\GL(V_{\lambda_i})}$ induced from $\rho_{\lambda_i}$. Then the base change $\hat Z$ of the closed ind-subscheme $\hat Z_{\lambda_1}\whtimes_{\Spf\BaseOfD\dbl\zeta\dbr}\ldots\whtimes_{\Spf\BaseOfD\dbl\zeta\dbr}\hat Z_{\lambda_m}$ under the morphism $\prod_i\rho_{\lambda_i*}\colon(\wh\SpaceFl_\BP)_{\mu^\#}\to\wh\SpaceFl_{\GL(V_{\lambda_1})}\whtimes_{\BaseOfD\dbl\zeta\dbr}\ldots\whtimes_{\BaseOfD\dbl\zeta\dbr}\wh\SpaceFl_{\GL(V_{\lambda_m})}$ is the bound representing the ``boundedness by $(\mu,\tilde z)$'' from \cite[Definition~3.5]{H-V}. It has reflex ring $\BaseOfD\dbl\zeta\dbr$ and $\hat{Z}$ is a representative of this bound over the reflex ring.

Instead of the Weyl modules $V_\lambda$ one could of course also work with other representations of $G$. If for example one takes the induced modules $\Ind_\olB^G(-\lambda)_\dom$, or tilting modules, one obtains different ind-subschemes $\hat Z$, but the underlying reduced ind-subschemes of these $\hat Z$ still coincide. If $\tilde z=z$, this reduced ind-subscheme equals the Schubert variety $\CS(\mu)\whtimes_\BaseOfD\Spec\BaseOfD\dbl\zeta\dbr$. This already indicates that it is reasonable to consider boundedness in terms of closed ind-subschemes of $\wh\SpaceFl_\BP$. Note that here also for $\tilde z=z-\zeta$ the bound $\hat Z$ only depends on $\mu$ and the class of modules considered (Weyl modules, etc.). This is no longer true for general $\BP$ as one sees from the next example.
\end{example}

\begin{example}\label{ExampleBd}
We discuss a special case of Example~\ref{ExampleSchubert}. Assume that $\charakt \BF_q\neq 2$ and set $K:=\BF_q\dpl z\dpr$. Let $E:=K[y]/(y^2-z)$ be the ramified quadratic field extension with $y^2=z$. Let $T$ be the one dimensional torus $\ker(N_{E/K}\colon \Res_{E/K}\BG_m\rightarrow\BG_m)$. Explicitly $T=\Spec K[a,b]/(a^2-b^2z-1)$, with the multiplication $(a,b)*(c,d)=(ac+bdz,ad+bc)$. Sending  $a\mapsto \frac{1}{2}(t+t^{-1})$ and $b\mapsto \frac{1}{2y}(t^{-1}-t)$ defines an isomorphism $\BG_{m,E}=\Spec E[t,t^{-1}] \cong T_E$ which we will use in the sequel to identify $X_*(T):=X_*(T_E)$ with $\BZ$. Here the finite Weyl group $W_0=(1)$ is trivial and the inertia group $I=\Gal(E/K)=\{1,\gamma\}$ acts on $X_*(T)= \BZ$ via $\gamma(\mu)=-\mu$. Therefore $\wt W=X_*(T)_I=\BZ/2\BZ$. 

Consider the N\'eron-model $\CT=\ker (N_{\cO_E/\cO_K}\colon \Res_{\cO_E/\cO_K}\BG_m \rightarrow \BG_m)$. As a scheme it is isomorphic to $\Spec\BF_q\dbl z\dbr[a,b]/(a^2-b^2z-1)$. Its special fiber has two connected components distinguished by $a\equiv 1$ or $-1\mod z$. Therefore the connected component of identity of $\CT$ is 
\[
\CT^0:=\Spec \BF_q\dbl z\dbr[a',b]/(2a'+z(a')^2-b^2), 
\]
where $a=1+za'$. In particular $(-1,0)\notin\CT^0\bigl(\BF_q\dpl z\dpr\bigr)$. We take $\BP$ as the group scheme $\CT^0$ over $\BD=\Spec\BF_q\dbl z\dbr$. Here $\BaseOfD=\BF_q$ and $\hat{\sigma}=\sigma$ is the $q$-Frobenius. By \cite[Lemma~5 and its proof]{H-R} the group $\CT^0$ is the unique parahoric group scheme with generic fiber $T$.

In this example the isomorphism \eqref{EqSchubertCell} is given by the Kottwitz map $\kappa_T\colon LT(\BF_q^\alg)\to X_*(T)_I$, see \cite[2.5]{Ko1}. Its inverse associates with each element of $X_*(T)_I$ a $\hat\sigma$-conjugacy class in $LT(\BF_q^\alg)$. For example if we take $\bar\mu=1\in X_*(T)_I=\BZ/2\BZ$ one has to choose a lift $\mu\in X_*(T_E)$. If we choose $\mu=1$ then with $\bar\mu=1$ it associates 
\begin{eqnarray*}
N_{E/K}(\mu(y))&=&\mu(y)\cdot\gamma(\mu(y))\\[2mm]
&=&\Bigl(\frac{1}{2}(y+y^{-1}),\frac{1}{2y}(y^{-1}-y)\Bigr)\cdot\Bigl(\frac{1}{2}(-y-y^{-1}),\frac{1}{-2y}(-y^{-1}-(-y))\Bigr)\\[2mm]
&=&(-1,0)\in T(\BF_q^{\alg}\dpl z\dpr).
\end{eqnarray*}
The $\hat\sigma$-conjugacy class given by $(-1,0)$ is independent of the choice of $\mu$ and of the uniformizer $y$ (and of $E$) by \cite[2.5]{Ko1}. The Schubert variety $\CS(\bar\mu)$ for $\bar\mu=1$ from Example~\ref{ExampleSchubert} therefore equals 
\[
L^+\CT^0(\BF_q)\cdot(-1,0)\cdot L^+\CT^0(\BF_q)/L^+\CT^0(\BF_q).
\]
However, as we have mentioned in the discussion after equation~\eqref{EqBoundHV1}, the bound $\bigl[\CS(\bar\mu)\whtimes_{\BF_q}\Spf\BF_q\dbl\zeta\dbr\bigr]$ is only useful to bound quasi-isogenies between local $\BP$-shtukas.

Instead we want to define a bound $[\hat{Z}_R]$ which is useful to bound local $\BP$-shtukas, and whose fiber $Z_R:=\hat{Z}_R\whtimes_{\Spf R}\Spec\kappa_R$ over $\kappa_R$ equals the Schubert variety $\CS(\bar\mu)\times_{\BF_q}\Spec\kappa_R\subset\SpaceFl\times_{\BF_q}\Spec\kappa_R$. In Example~\ref{ExampleHV} we were able to achieve this by lifting $\mu(z)\in LG(\BF_q)$ to an element $\mu(z-\zeta)\in G\bigr(\BF_q\dbl z,\zeta\dbr[\tfrac{1}{z-\zeta}]\bigr)$; see the discussion after \eqref{EqBoundHV1}. This is not possible here. We can only lift $(-1,0)$ over the ramified extension $\BF_q\dbl\zeta\dbr[\eeeta]/(\eeeta^2-\zeta)$ of $\BF_q\dbl\zeta\dbr$. Namely, consider the isomorphism $K=\BF_q\dpl z\dpr\rightarrow \BF_q\dpl\zeta\dpr, z\mapsto \zeta$. Fix an extension $i\colon E\hookrightarrow \BF_q\dpl\zeta\dpr^{\alg}$ of this isomorphism, set $\eeeta :=i(y)$ and lift $N_{E/K}(\mu(y))$ to $g(E,\mu,i):=N_{E/K}\bigl(\mu\bigl(y-i(y)\bigr)\bigr)$. For example if $\mu=1\in\BZ= X_*(T_E)$ we compute
\begin{eqnarray*} 
& \left(\alpha, \beta \right)\;:=\; g(E,1,i)\;=\;\mu(y-\eeeta)\cdot \gamma(\mu(y-\eeeta))\;= \\[3mm]
& \DS\left(\frac{(y-\eeeta)+(y-\eeeta)^{-1}}{2}\,,\frac{(y-\eeeta)^{-1}-(y-\eeeta)}{2y}\right)
\cdot\gamma\left(\frac{(y-\eeeta)+(y-\eeeta)^{-1}}{2}\,,\frac{(y-\eeeta)^{-1}-(y-\eeeta)}{2y}\right),
\end{eqnarray*}
with $\gamma(y)=-y$ and $\gamma|_{\BF_q\dpl\eeeta\dpr}=\id$. Then
\begin{eqnarray*}
\alpha&=&\tfrac{1}{4}\left( (y-\eeeta)+(y-\eeeta)^{-1}\right) \left( (-y-\eeeta)+(-y-\eeeta)^{-1}\right)\\
&&-\tfrac{1}{4}\left( (y-\eeeta)^{-1}-(y-\eeeta)\right) \left( (-y-\eeeta)^{-1}-(-y-\eeeta)\right)\\[2mm]
&=&\frac{1}{2}\left(\dfrac{(y-\eeeta)^{2}+(-y-\eeeta)^{2}}{(-y-\eeeta)(y-\eeeta)} \right)\\[2mm]
&=&\DS\frac{\zeta+z}{\zeta-z},
\end{eqnarray*}
and
\begin{eqnarray*}
\beta&=&\tfrac{-1}{4y}\left( (y-\eeeta)+(y-\eeeta)^{-1}\right) \left( (-y-\eeeta)^{-1}-(-y-\eeeta)\right)\\[1mm]
&&+\tfrac{1}{4y}\left( (y-\eeeta)^{-1}-(y-\eeeta)\right) \left( (-y-\eeeta)+(-y-\eeeta)^{-1}\right)\\[2mm]
&=&\DS\frac{2\eeeta}{\zeta-z}.
\end{eqnarray*}
Thus, as a lift of $(-1,0)$ we get
$$ 
g(E,1,i)=N_{E/K}(\mu(y-\eeeta))=\left(\frac{\zeta+z}{\zeta-z},\frac{2\eeeta}{\zeta-z}\right).
$$ 
This shows that we can define the desired bound by 
\[
\hat Z_{\mu,i,\BF_q\dbl\eeeta\dbr}:=\ol{L^+\CT^0\cdot g(E,\mu,i)\cdot L^+\CT^0/L^+\CT^0}\,\subset\, \SpaceFl_{\CT^0,\BF_q\dbl\eeeta\dbr}.
\]
However, this bound depends on the choice of $\mu$ and of the embedding $i$. We first compute how $g(E,\mu,i)$ depends on the chosen embedding $i$. For this purpose we compute $\dfrac{N_{E/K}(\mu(y-i(y)))}{N_{E/K}(\mu(y-i\circ \gamma(y))}$. Changing $i$ to $i\circ\gamma$ replaces $\eeeta$ by $-\eeeta$ and we have
$$ 
g(E,1,i\circ \gamma)= N_{E/K}(\mu(y+\eeeta))= \left(\frac{\zeta+z}{\zeta-z},\frac{-2\eeeta}{\zeta-z}\right).
$$
Note that $g(E,1,i)=g(E,1,i\circ\gamma)^{-1}$. Therefore 
$$
\frac{g(E,1,i)}{g(E,1,i \circ \gamma)}=g(E,1,i)^2=\left(\frac{(z+\zeta)^2+4\zeta z}{(\zeta-z)^2},\frac{4\eeeta(z+\zeta)}{(\zeta-z)^2}\right).
$$
This also shows what happens if we replace $\mu\in X_*(T_E)=\BZ$ by another lift of $\bar\mu\in\BZ/2\BZ$, i.e.
$$
\frac{g(E,\mu+2,i)}{g(E,\mu,i)}=g(E,2,i)=g(E,1,i)^2.
$$

Observe that $\DS\frac{g(E,1,i)}{g(E,1,i\circ \gamma)} \in \CT^0(\BF_q\dbl z, \tfrac{\eeeta}{z} \dbr)\setminus\CT^0(\BF_q\dbl\eeeta, z\dbr)$. So the element $g(E,1,\gamma\circ i)$ does not lie in the closure of the subscheme 
\[
L^+\CT^0\cdot g(E,1,i)\cdot L^+\CT^0\,\subset\, L\CT^0\whtimes_{\Spec\BF_q}\Spf\BF_q\dbl\eeeta\dbr. 
\]
In particular the bounds $\hat Z_{\mu,i}:=[\hat Z_{\mu,i,\BF_q\dbl\eeeta\dbr}]$ depend on the chosen embedding $i\colon E\hookrightarrow \BF_q\dpl\zeta\dpr^{\alg}$ and on the lift $\mu\in X_*(T)$ of $\bar\mu\in X_*(T)_I$. For this reason we decided to treat bounds axiomatically in Definition~\ref{DefBDLocal}. Our discussion also shows that the reflex ring of the bound $\hat{Z}_{\mu,i}$ is $\BF_q\dbl\eeeta\dbr$, because $\wt\gamma(\hat{Z}_{\mu,i})=\hat{Z}_{\mu,i\circ\gamma}\ne\hat{Z}_{\mu,i}$ for the non-trivial element $\wt\gamma\in\Aut_{\BF_q\dbl\zeta\dbr}(\BF_q\dbl\xi\dbr)$.
\end{example}

%%%%%%%%%%%%%%%%%%%%%%%%%%%%%%%%%%%%%%%%%%%%%%%%%%%%%%%%%%%%%%%%%%%%%%%%%%%%%%%%

\subsection{Representability of the Bounded Rapoport--Zink Functor}\label{Rep_R-Z}

In this section we assume that $\BP$ is a smooth affine group scheme over $\BD$ with connected reductive generic fiber $\genericG$. Let $b\in L\genericG(\BaseFldOfLocSht)$ for some field $\BaseFldOfLocSht\in\Nilp_{\BaseOfD\dbl\zeta\dbr}$. 
With $b$ Kottwitz associates a slope homomorphism
$$
\nu_b\colon  D_{\BaseFldOfLocSht\dpl z\dpr}\to \genericG_{\BaseFldOfLocSht\dpl z\dpr},
$$
called Newton polygon of $b$; see \cite[4.2]{Ko1}. Here $D$ is the diagonalizable pro-algebraic group over ${\BaseFldOfLocSht\dpl z\dpr}$ with character group $\BQ$. The slope homomorphism is characterized by assigning the slope filtration of $(V\otimes_{\BaseOfD\dpl z\dpr}{\BaseFldOfLocSht\dpl z\dpr},\rho(b)\cdot(\id\otimes \hat{\sigma}))$ to any $\BaseOfD\dpl z\dpr$-rational representation $(V,\rho)$ of $\genericG$; see \cite[Section~3]{Ko1}.  
We assume that $b\in L\genericG(\BaseFldOfLocSht)$ satisfies a \emph{decency equation for a positive integer $s$}, that is,
\begin{equation}\label{EqDecency}
(b\hat{\sigma})^s\;=\;s\nu_b(z)\,\hat{\sigma}^s\qquad\text{in}\quad L\genericG(\BaseFldOfLocSht)\rtimes \langle\hat{\sigma}\rangle.
\end{equation}

\begin{remark}\label{RemDecent}
Assume that $b\in L\genericG(\BaseFldOfLocSht)$ is decent with the integer $s$ and let $\ell\subset\BaseFldOfLocSht^\alg$ be the finite field extension of $\BaseOfD$ of degree $s$. Then $b\in L\genericG(\ell)$ because by \eqref{EqDecency} the element $b$ has values in the fixed field of $\hat\sigma^s$ which is $\ell$. Note that if $\BaseFldOfLocSht$ is algebraically closed and the generic fiber $\genericG$ of $\BP$ is connected reductive, any $\hat{\sigma}$-conjugacy class in $L\genericG(\BaseFldOfLocSht)$ contains an element satisfying a decency equation; see \cite[4.3]{Ko1} and use \cite[\S\,II.2.3, Th\'eor\`eme~$1'$ and Remarque~1, p.~140]{SerreCohoGal} instead of Steinberg's theorem. 
\end{remark}

\begin{remark}\label{RemJb}
With the element $b\in L\genericG(\BaseFldOfLocSht)$ one can associate a connected algebraic group $J_b$ over $\BaseOfD\dpl z\dpr$ which is defined by its functor of points that assigns to an $\BaseOfD\dpl z\dpr$-algebra $R$ the group
$$
J_b(R):=\big\{g \in \genericG(R\otimes_{\BaseOfD\dpl z\dpr} {\BaseFldOfLocSht\dpl z\dpr})\colon g^{-1}b\hat{\sigma}(g)=b\big\}.
$$
Let $b$ satisfy a decency equation for the integer $s$ and let $F_s$ be the fixed field of $\hat{\sigma}^s$ in ${\BaseOfD^\alg\dpl z\dpr}$. Then $\nu_b$ is defined over $F_s$ and $J_b\times_{\BaseOfD\dpl z\dpr}F_s$ is the centralizer of the 1-parameter subgroup $s\nu_b$ of $\genericG$ and hence a Levi subgroup of $\genericG_{F_s}$; see \cite[Corollary~1.14]{RZ}. In particular $J_b(\BaseOfD\dpl z\dpr)\subset \genericG(F_s)\subset L\genericG(\ell)$ where $\ell$ is the finite field extension of $\BaseOfD$ of degree $s$.
\end{remark}

\medskip

In the remaining part of the chapter we consider the bounded Rapoport--Zink functor and prove that it is ind-representable by a formal scheme in the following important special situation. Let $\hat{Z}$ be a bound with reflex ring $R_{\hat{Z}}=\kappa\dbl\xi\dbr$ and special fiber $Z\subset\SpaceFl_\BP\whtimes_\BaseOfD\Spec\kappa$ ; see Definition~\ref{DefBDLocal}. Let $\ul\BL_0=(L^+\BP,b\hat{\sigma}^*)$ be a trivialized local $\BP$-shtuka over a field $\BaseFldOfLocSht$ in $\Nilp_{\BaseOfD\dbl\zeta\dbr}$. Assume that $b$ is decent with integer $s$ and let $\ell\subset\BaseFldOfLocSht^\alg$ be the compositum of the residue field $\kappa$ of $R_{\hat{Z}}$ and the finite field extension of $\BaseOfD$ of degree $s$. Then $b\in L\genericG(\ell)$ by Remark~\ref{RemDecent}. So $\ul\BL_0$ is defined over $\ell$ and we may replace $\BaseFldOfLocSht$ by $\ell$. Note that $\ell\dbl\xi\dbr$ is the unramified extension of $R_{\hat{Z}}$ with residue field $\ell$.

\renewcommand{\BaseFldOfLocSht}{\ell}

\begin{definition}
Keep the notation from above and set $\bar\TTT:=\Spec\BaseFldOfLocSht$ and $\ul\CL_0:=\ul\BL_0$. \\
(a) Consider the base change $\ul\CM_{\ul\BL_0}\whtimes_{\BaseFldOfLocSht\dbl\zeta\dbr}\Spf\BaseFldOfLocSht\dbl\xi\dbr$ of the functor $\ul\CM_{\ul\BL_0}$ from Definition~\ref{RZ space} and its subfunctor
\begin{eqnarray*}
\ul{\CM}_{\ul{\BL}_0}^{\hat{Z}}\colon (\Nilp_{\BaseFldOfLocSht\dbl\xi\dbr})^o &\longto & \Sets\\
S&\longmapsto & \Bigl\{\,\text{Isomorphism classes of }(\ul\CL,\bar\ppsi)\in\ul{\CM}_{\ul{\BL}_0}(S)\colon\ul\CL\text{~is bounded by $\hat{Z}$}\Bigr\}
\end{eqnarray*}
Note that the functor $\ul\CM_{\ul\BL_0}\whtimes_{\BaseFldOfLocSht\dbl\zeta\dbr}\Spf\BaseFldOfLocSht\dbl\xi\dbr$ is represented by the ind-scheme $\wh\SpaceFl_{\BP,\BaseFldOfLocSht\dbl\xi\dbr}:=\SpaceFl_{\BP}\whtimes_{\BaseOfD}\Spf\BaseFldOfLocSht\dbl\xi\dbr$ by Theorem~\ref{ThmModuliSpX}, and $\ul{\CM}_{\ul{\BL}_0}^{\hat{Z}}$ is a closed ind-subscheme by Proposition~\ref{PropBoundedClosed}.

\medskip\noindent
(b) We define the associated \emph{affine Deligne--Lusztig variety} over $\ell$ as the reduced closed ind-subscheme $X_Z(b)\subset\SpaceFl_\BP\whtimes_\BaseOfD\Spec\BaseFldOfLocSht$ whose $K$-valued points (for any field extension $K$ of $\BaseFldOfLocSht$) are given by
$$
X_Z(b)(K):=\big\{ g\in \SpaceFl_\BP(K)\colon g^{-1}\,b\,\hat\sigma^\ast(g) \in Z(K)\big\}.
$$
If $\omega \in \wt{W}$ and $Z=\CS(\omega)$ is the Schubert variety from Example~\ref{ExampleSchubert}, we set $X_{\preceq\omega}(b):=X_{\CS(\omega)}(b)$.
\end{definition}

\begin{theorem} \label{ThmRRZSp}
If $\BP$ is a smooth affine group scheme over $\BD$ with connected reductive generic fiber, the functor $\RZ\colon (\Nilp_{\BaseFldOfLocSht\dbl\xi\dbr})^o\to\Sets$ is ind-representable by a formal scheme over $\Spf \BaseFldOfLocSht\dbl\xi\dbr$ which is locally formally of finite type and separated. Its underlying reduced subscheme equals $X_{Z}(b)$. In particular $X_Z(b)$ is a scheme locally of finite type and separated over $\BaseFldOfLocSht$. The formal scheme representing $\RZ$ is called a \emph{bounded Rapoport--Zink space for local $\BP$-shtukas}.
\end{theorem}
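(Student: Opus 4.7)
The plan is to bootstrap from the unbounded result (Theorem~\ref{ThmModuliSpX}\ref{ThmModuliSpX_A}) and the boundedness representability (Proposition~\ref{PropBoundedClosed}). Since $\ul\BL_0=\bigl((L^+\BP)_\ell,b\hat\sigma^*\bigr)$ is trivialized, Theorem~\ref{ThmModuliSpX}\ref{ThmModuliSpX_A} identifies $\ul\CM_{\ul\BL_0}$ with $\wh{\SpaceFl}_{\BP,\Spec\ell}=\SpaceFl_\BP\whtimes_\BaseOfD\Spf\ell\dbl\zeta\dbr$, where under this identification a pair $(\ul\CL,\bar\delta)$ locally corresponds to a class $g\!\cdot\!L^+\BP$, and the local Frobenius on $\ul\CL$ is $g^{-1}b\hat\sigma^*(g)\hat\sigma^*$. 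Boundedness of $\tauLoc$ by $\hat Z$ then becomes exactly the condition that $g^{-1}b\hat\sigma^*(g)$ maps into $\hat Z\subset\wh{\SpaceFl}_\BP$. By Proposition~\ref{PropBoundedClosed}, this cuts out a closed ind-subscheme $\ul\CM_{\ul\BL_0}^{\hat Z}\subset\wh{\SpaceFl}_{\BP,\Spec\ell}$. In particular its reduced underlying ind-subscheme is precisely the affine Deligne--Lusztig variety $X_Z(b)\subset\SpaceFl_{\BP,\ell}$ by definition.

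Next I would prove that $X_Z(b)$ is a scheme locally of finite type over $\ell$. The ind-scheme $\SpaceFl_\BP=\dirlim_N\SpaceFl_\BP^{(N)}$ is a filtered union of quasi-compact (even quasi-projective) pieces by Remark~\ref{Flagisquasiproj}, so it suffices to show that $X_Z(b)$ meets only finitely many $J_b(\BaseOfD\dpl z\dpr)$-translates of each $\SpaceFl_\BP^{(N)}$, and conversely is covered by countably many such translates of a fixed quasi-compact piece. Concretely, the left-multiplication action of $J_b(\BaseOfD\dpl z\dpr)$ on $\SpaceFl_\BP$ preserves $X_Z(b)$ since for $j\in J_b$ one has $(jg)^{-1}b\hat\sigma^*(jg)=g^{-1}j^{-1}b\hat\sigma(j)\hat\sigma^*(g)=g^{-1}b\hat\sigma^*(g)$. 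Using the decency equation \eqref{EqDecency} together with Remark~\ref{RemJb} identifying $J_b\times_{\BaseOfD\dpl z\dpr}F_s$ with a Levi of $\genericG_{F_s}$, a standard argument (as in \cite[Theorem~5.6]{H-V} and \cite[Theorem~2.16, Proposition~2.17]{RZ}) shows that the orbits partition $X_Z(b)$ into pieces each of which lies in a bounded region of $\SpaceFl_\BP$ and hence is quasi-compact. This yields that $X_Z(b)$ is a scheme locally of finite type, and in particular its connected components are of finite type.

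To upgrade from ``ind-scheme with nice reduced locus'' to ``formal scheme locally formally of finite type'' I would argue as follows. The bound $\hat Z$ is itself a formal scheme over $\Spf\BaseOfD\dbl\zeta\dbr$ (Remark~\ref{RemBdIsFormSch}), presented as $\dirlim_m\hat Z^{(m)}$ with each $\hat Z^{(m)}$ a scheme and $\hat Z^{(m)}_\red=Z_\red$. Choose \'etale-locally a section of $L\genericG\to\SpaceFl_\BP$; then the closed immersion $\ul\CM_{\ul\BL_0}^{\hat Z}\hookrightarrow\wh{\SpaceFl}_{\BP,\Spec\ell}$ is the pullback along the map $g\mapsto g^{-1}b\hat\sigma^*(g)$ of the closed formal subscheme $\hat Z\subset\wh{\SpaceFl}_\BP$. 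Thus $\ul\CM_{\ul\BL_0}^{\hat Z}=\dirlim_m\ul\CM_{\ul\BL_0}^{\hat Z^{(m)}}$ where each $\ul\CM_{\ul\BL_0}^{\hat Z^{(m)}}$ is a closed subscheme of the quasi-projective scheme $\wh{\SpaceFl}_{\BP,\Spec\ell}^{(N,m)}$ constructed in the proof of Theorem~\ref{ThmModuliSpX}, with the same underlying reduced scheme $X_Z(b)$. Because $X_Z(b)$ is locally of finite type and $\hat Z$ is a $\zeta$-adic formal scheme, the colimit is a formal scheme locally formally of finite type over $\Spf\ell\dbl\zeta\dbr$ in the sense of \cite[Ch.~2]{RZ}.

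The main obstacle will be the second step: proving that $X_Z(b)$ is a scheme locally of finite type without assuming $\BP$ is reductive. Here the parahoric hypothesis and the decency of $b$ are essential, and the argument requires combining (a) the ind-quasi-projectivity of $\SpaceFl_\BP$ from \cite{PR2}, (b) the Levi description of $J_b$ over $F_s$ (Remark~\ref{RemJb}) to control the sizes of $J_b$-orbits, and (c) the quasi-compactness of $Z$ built into the definition of bound. Once this geometric finiteness is in hand, the passage from the ind-scheme statement of Theorem~\ref{ThmModuliSpX} to a genuine formal scheme is formal, and we obtain the description of the reduced locus as $X_Z(b)$ for free from our construction.
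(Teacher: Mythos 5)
Your first step and the identification of the reduced locus with $X_Z(b)$ match the paper. But your final sentence — that ``the passage from the ind-scheme statement of Theorem~\ref{ThmModuliSpX} to a genuine formal scheme is formal'' — is exactly where the real content of the theorem lies, and your step 3 contains a gap (and an inconsistency: you assert that each $\ul\CM_{\ul\BL_0}^{\hat Z^{(m)}}$ is a closed subscheme of a single quasi-projective piece $\wh\SpaceFl_{\BP,\Spec\ell}^{(N,m)}$ with reduced locus $X_Z(b)$, which cannot hold since $X_Z(b)$ is in general not quasi-compact; bounding the Frobenius $\tauLoc=g^{-1}b\hat\sigma^*(g)$ by $\hat Z$ does \emph{not} bound the quasi-isogeny $g$, so $\ul\CM_{\ul\BL_0}^{\hat Z}$ is genuinely spread out over infinitely many pieces of $\SpaceFl_\BP$). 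An inductive limit of schemes along closed immersions, all with the same reduced locus, is not automatically an adic formal scheme: one must show that for each fixed infinitesimal order $c$ the limit stabilizes in the $\SpaceFl_\BP$-direction. In the paper this is Lemma~\ref{LemmaMn}, whose key Claim (that $R_m/J^cR_m\to R_{m_0}/J^cR_{m_0}$ is an isomorphism for $m\gg0$) is not formal: it requires algebraizing the universal local $\BP$-shtuka from $\Spf R/J^c$ to $\Spec R/J^c$ (Lemma~\ref{remdejong}, which itself uses the quasi-compactness of $Z$ and a faithful representation into $\SL_r$), then rigidity of quasi-isogenies and quasi-compactness of $\Spec R/J^c$ to bound the lifted $\delta$ by some $2m_0\rho\dual$. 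Your proposal contains no substitute for this step.

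The second missing ingredient is that local formal finiteness of $\RZ$ is a statement about the formal scheme, not just its reduced locus, so proving $X_Z(b)$ locally of finite type does not suffice. One needs an \emph{open} cover of $\RZ$ by formal subschemes each contained in some $\CM_n$ (the completion of $\RZ$ along the quasi-compact locus where $\delta$ is bounded by $2n\rho\dual$). The paper produces this cover ($\CU^r=\bigcup_n\CU_n^r$) using the uniform estimate of Lemma~\ref{UnifDist2}: every point of $\RZ$ is within distance $d_0$ (for the metric $\tilde d$ built from $\delta^{-1}\delta'$) of an $\ell$-rational point, which rests on the decency equation, Remark~\ref{RemJb}, and the Rapoport--Zink finiteness theorem in the Bruhat--Tits building \cite{RZ2} via Lemma~\ref{unif_distribution_lemma}. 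You cite the right external sources for the reduced locus, but your ``partition into $J_b$-orbits lying in bounded regions'' is not the mechanism (orbits are not open, and a partition into quasi-compact pieces proves nothing about local finiteness); what is needed, and what you would have to reconstruct, is the uniform $d_0$-density of rational points and the resulting stabilizing chain of opens $\CU^r_n$.
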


Recall that a formal scheme over $\BaseFldOfLocSht\dbl\xi\dbr$ in the sense of \cite[I$_{new}$, 10]{EGA} is called \emph{locally formally of finite type} if it is locally noetherian and adic and its reduced subscheme is locally of finite type over $\BaseFldOfLocSht$. It is called \emph{formally of finite type} if in addition it is quasi-compact.

\begin{remark}\label{RemJActsOnRZ}
By our assumptions $\QIsog_{\BaseFldOfLocSht}(\ul{\BL}_0)$ equals the group $J_b(\BaseOfD\dpl z\dpr)$ from Remark~\ref{RemJb}.  This group acts on the functor $\RZ$ via $g\colon(\ul\CL,\bar\delta)\mapsto(\ul\CL,g\circ\bar\delta)$ for $g\in\QIsog_{\BaseFldOfLocSht}(\ul{\BL}_0)$.
\end{remark}

\begin{proof}[Proof of Theorem~\ref{ThmRRZSp}]
The proof will use a sequence of lemmas and will eventually be complete after Lemma~\ref{UnifDist2}. Consider the universal local $\BP$-shtuka $\ul{\CL}_{univ}$ over $\wh{\SpaceFl}_{\BP,\BaseFldOfLocSht\dbl\xi\dbr}$ (see Theorem~\ref{ThmModuliSpX}). Let $\CM_{\ul{\BL}_0}^{\hat{Z}}$ be the closed ind-subscheme of $\wh{\SpaceFl}_{\BP,\BaseFldOfLocSht\dbl\xi\dbr}$ over which $\ul{\CL}_{univ}$ is bounded by $\hat{Z}$; see Proposition~\ref{PropBoundedClosed}. By construction $\CM_{\ul{\BL}_0}^{\hat{Z}}$ ind-represents the functor $\ul{\CM}_{\ul{\BL}_0}^{\hat{Z}}$. It is clear that the reduced ind-subscheme equals $X_Z(b)$. We have to show that $\CM_{\ul{\BL}_0}^{\hat{Z}}$ is a formal scheme locally formally of finite type and separated. Note that the separatedness over $\Spf\ell\dbl\xi\dbr$ follows from the ind-separatedness of $\wh{\SpaceFl}_{\BP,\BaseFldOfLocSht\dbl\xi\dbr}$; see Theorem~\ref{ThmModuliSpX}. By rigidity of quasi-isogenies the functor $\RZ$ is equivalent to the following functor
\begin{eqnarray*}
(\Nilp_{\BaseFldOfLocSht\dbl\xi\dbr})^o &\to & \Sets\\
S&\longmapsto & \Bigl\{\,(\ul\CL,\ppsi)\colon 
 \ul\CL \text{ is a local $\BP$-shtuka over $S$ bounded by $\hat{Z}$}\\ 
&&\qquad \text{ and }\ppsi\colon \ul\CL\to\ul\BL_{0,S}\text{ is a quasi-isogeny}\,\Bigr\}/\sim.
\end{eqnarray*}
We take a representation $\iota\colon  \BP \to \SL_{r,\BD}=:H$ with quasi-affine quotient $H/\BP$; see \cite[Proposition~1.3]{PR2} and \cite[Proposition~2.1]{AH_Unif}. It induces a 1-morphism 
$$
\iota_*:=\scrH^1(\iota)\colon \scrH^1(\Spec\BaseOfD, L^+\BP)(S)\to \scrH^1(\Spec\BaseOfD,L^+H)(S).
$$
For an $L^+\BP$-torsor $\CL_+$ over $S$ we denote by $\CV(\iota_*\CL_+)$ the sheaf of $\CO_S\dbl z\dbr$-modules associated with the image of $\CL_+$ in $\scrH^1(\Spec\BaseOfD,L^+\GL_r)(S)$ by Remark~\ref{RemVect}. In particular $\CV(\iota_*(L^+\BP)_S)=\CO_S\dbl z\dbr^{\oplus r}$. Let $\rho\dual$ be the half-sum of all positive coroots of $H$ with respect to the Borel subgroup of upper triangular matrices in $H=\SL_r$.

For each non-negative integer $n\in \BN_0$ let $\CM^n$ be the closed ind-subscheme of $\RZ$ defined by the following sub functor of $\RZ$
\begin{eqnarray*}
\ul{\CM}^n\colon (\Nilp_{\BaseFldOfLocSht\dbl\xi\dbr})^o &\longto & \Sets\\
S&\longmapsto & \Bigl\{\,(\ul\CL,\ppsi)\in \RZ (S)\colon\scrH^1(\iota)(\delta) \text{ is bounded by $2n\rho\dual$ }\,\Bigr\},
\end{eqnarray*}
where $\ul\CL=(\CL_+,\tauLoc) $ and where we say that \emph{$\scrH^1(\iota)(\delta)$ is bounded by $2n\rho\dual$} if for all $j=1,\ldots,r$
\begin{equation}\label{EqBdHV}
\TS\bigwedge^j_{\CO_S\dbl z\dbr}\scrH^1(\iota)(\delta)\bigl(\CV(\iota_*\CL_+)\bigr)\;\subset\;z^{n(j^2-jr)}\cdot\bigwedge^j_{\CO_S\dbl z\dbr} \CV(\iota_*(L^+\BP)_S)\,.
\end{equation}
By \cite[Lemma~4.3]{H-V} the latter is equivalent to the boundedness condition considered in \cite[Definition~3.5]{H-V}, see Example~\ref{ExampleHV}, because $\rho\dual=(r-1,\ldots,1-r)$ and \eqref{EqBdHV} is automatically an equality for $j=r$ as $\iota$ factors through $H$.

\begin{lemma} \label{M^n}
The ind-scheme $\CM^n$ representing the functor $\ul\CM^n$ is a $\xi$-adic noetherian formal scheme over $\BaseFldOfLocSht\dbl\xi\dbr$, whose underlying topological space $(\CM^n)_\red$ is a quasi-projective scheme over $\Spec\BaseFldOfLocSht$ and even projective if $\BP$ is parahoric in the sense of Bruhat and Tits \cite[D\'efinition~5.2.6]{B-T} and \cite{H-R}.
\end{lemma}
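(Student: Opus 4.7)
The plan is to exhibit $\CM^n$ as a closed subscheme of an explicit quasi-projective formal scheme. The bound \eqref{EqBdHV} is really a Schubert bound for $\SL_r$: by \cite[Lemma~4.3]{H-V} it cuts out in $\wh\SpaceFl_{\SL_r,\BaseFldOfLocSht}$ precisely the (base change of the) Schubert variety $\CS(2n\rho\dual)\whtimes_\BaseOfD\Spf\BaseFldOfLocSht\dbl\zeta\dbr$ associated with the coweight $2n\rho\dual$; since $\SL_r$ is reductive, $\CS(2n\rho\dual)$ is a projective scheme over $\BaseOfD$ by Remark~\ref{Flagisquasiproj} and Example~\ref{ExampleHV}.

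First I would apply Pappas--Rapoport's \cite[Theorem~1.4]{PR2} together with Proposition~\ref{ffrep}\ref{ffrep_A} to the representation $\iota\colon\BP\into\SL_{r,\BD}$: the induced morphism $\iota_*\colon\SpaceFl_\BP\to\SpaceFl_{\SL_r}$ is a locally closed immersion of ind-schemes. Hence the preimage $Y:=\iota_*^{-1}\bigl(\CS(2n\rho\dual)\bigr)$ is a locally closed subscheme of $\SpaceFl_\BP$ mapping via a locally closed immersion into the projective scheme $\CS(2n\rho\dual)$, and is therefore a quasi-projective scheme over $\BaseOfD$. Base change gives the noetherian adic formal scheme $\wh Y:=Y\whtimes_\BaseOfD\Spf\BaseFldOfLocSht\dbl\zeta\dbr$ which is formally of finite type over $\Spf\BaseFldOfLocSht\dbl\zeta\dbr$, and in which every $S$-point $\delta$ automatically satisfies the boundedness condition \eqref{EqBdHV}.

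Next I would show that the remaining two conditions are closed on $\wh Y$. By construction $\CM^n$ is the intersection $\RZ\cap\wh Y$ inside $\wh\SpaceFl_{\BP,\BaseFldOfLocSht}$. The subfunctor $\RZ\subset\wh\SpaceFl_{\BP,\BaseFldOfLocSht}$ has already been shown to be a closed ind-subscheme (cut out by the condition that $\tauLoc$ be bounded by $\hat Z$, which is closed by Proposition~\ref{PropBoundedClosed}, using Theorem~\ref{ThmModuliSpX}\ref{ThmModuliSpX_A} to produce the universal local $\BP$-shtuka and its universal quasi-isogeny to $\ul\BL_{0,S}$). Therefore $\CM^n$ is a closed formal subscheme of $\wh Y$, and as such inherits the property of being noetherian and $\zeta$-adic over $\Spf\BaseFldOfLocSht\dbl\zeta\dbr$. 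Its underlying reduced subscheme $(\CM^n)_\red$ is a closed subscheme of the quasi-projective $\BaseFldOfLocSht$-scheme $Y\times_{\BaseOfD}\Spec\BaseFldOfLocSht$, hence is itself quasi-projective over $\Spec\BaseFldOfLocSht$.

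The only real obstacle is the first step, namely confirming that \eqref{EqBdHV} exactly describes the Schubert condition for $\SL_r$ at the coweight $2n\rho\dual$ (so that $Y$ really is quasi-projective and not merely an ind-scheme). This is a calculation with exterior powers and dominant coweights of $\SL_r$ and is handled by \cite[Lemma~4.3]{H-V}; once this identification is granted, everything else is a formal consequence of the locally closed immersion $\iota_*$ and Proposition~\ref{PropBoundedClosed}.
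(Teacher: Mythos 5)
Your overall strategy is the paper's: use the faithful representation $\iota\colon\BP\into\SL_{r,\BD}=H$ with quasi-affine quotient (Proposition~\ref{ffrep}\ref{ffrep_A}) to get a locally closed immersion $\iota_*\colon\wh\SpaceFl_{\BP,\BaseFldOfLocSht}\to\wh\SpaceFl_{H,\BaseFldOfLocSht}$ (Remark~\ref{Flagisquasiproj}), observe that condition \eqref{EqBdHV} cuts out a quasi-compact piece of $\wh\SpaceFl_{H,\BaseFldOfLocSht}$ with projective underlying space, and intersect its preimage with the closed ind-subscheme $\RZ$.

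The one step that does not hold as you state it is the identification of the locus defined by \eqref{EqBdHV} with the base change of the \emph{reduced} Schubert variety $\CS(2n\rho\dual)$. Condition \eqref{EqBdHV} is a system of lattice inclusions and defines a closed ind-subscheme of $\wh\SpaceFl_{H,\BaseFldOfLocSht}$ which need not be reduced; only its underlying reduced subscheme agrees with that of $\CS(2n\rho\dual)$. This is precisely the caveat of Example~\ref{ExampleHV}(b), and \cite[Lemma~4.3]{H-V} only establishes the equivalence of the exterior-power and Weyl-module formulations of the bound, not reducedness. Consequently $\CM^n$ need not be a closed formal subscheme of your $\wh Y=\iota_*^{-1}\bigl(\CS(2n\rho\dual)\bigr)\whtimes_\BaseOfD\Spf\BaseFldOfLocSht\dbl\zeta\dbr$, and the deduction that $\CM^n$ ``inherits'' noetherianness and $\zeta$-adicness from $\wh Y$ breaks down. (The statement about $(\CM^n)_\red$ is unaffected, since the underlying topological spaces do agree.) The repair is what the paper does: do not pass to the reduced Schubert variety, but cite \cite[Proposition~5.5]{H-V} for the fact that the closed ind-subscheme $\wh\SpaceFl_{H,\BaseFldOfLocSht}^{(n)}$ of $\wh\SpaceFl_{H,\BaseFldOfLocSht}$ defined by \eqref{EqBdHV} itself, with whatever nilpotents it carries, is already a $\zeta$-adic noetherian formal scheme with projective underlying space. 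Then $\CM^n=\RZ\whtimes_{\wh\SpaceFl_{H,\BaseFldOfLocSht}}\wh\SpaceFl_{H,\BaseFldOfLocSht}^{(n)}$ is a closed formal subscheme of a locally closed piece of it, and the lemma follows by reducing modulo $\zeta^i$ and applying \cite[I$_{\rm new}$, Cor.~10.6.4]{EGA}.
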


\begin{proof}
Since $H/\BP$ is quasi-affine, the induced morphism $\iota_*\colon \wh{\SpaceFl}_{\BP,\BaseFldOfLocSht\dbl\xi\dbr} \to \wh{\SpaceFl}_{H,\BaseFldOfLocSht\dbl\xi\dbr}$ is a locally closed embedding by Remark~\ref{Flagisquasiproj}. The representation $\iota$ induces a functor $\iota_\ast$ from local $\BP$-shtukas to local $H$-shtukas. Consider the local $H$-shtuka $\ul\BH_0:= \iota_\ast \ul\BL_0=((L^+H)_\BaseFldOfLocSht,\iota(b)\hat\sigma^*)$ over $\BaseFldOfLocSht$ and view $\wh{\SpaceFl}_{H,\BaseFldOfLocSht\dbl\xi\dbr}$ as a moduli space representing the functor $\ul \CM_{\ul\BH_0}$, parametrizing local $H$-shtukas together with a quasi-isogeny $\delta_H$ to $\ul\BH_0$; see Theorem~\ref{ThmModuliSpX}. Let $\wh{\SpaceFl}_{H,\BaseFldOfLocSht\dbl\xi\dbr}^{(n)}$ be the closed ind-subscheme of $\wh{\SpaceFl}_{H,\BaseFldOfLocSht\dbl\xi\dbr}$ defined by condition~\eqref{EqBdHV}, that is, by bounding $\delta_H$ by $2n\rho \dual$. It is a $\xi$-adic noetherian formal scheme over $\Spf\BaseFldOfLocSht\dbl\xi\dbr$ by \cite[Proposition~5.5]{H-V} whose underlying topological space is a projective scheme over $\Spec\BaseFldOfLocSht$. Thus for all $i$
\[
\CM^n(i)\;:=\;\CM^n\whtimes_{\Spf\BaseFldOfLocSht\dbl\xi\dbr}\Spec\BaseFldOfLocSht\dbl\xi\dbr/(\xi^i)\;=\; \ul\CM_{\ul\BL_0}^{\hat Z}\whtimes_{\wh{\SpaceFl}_{H,\BaseFldOfLocSht\dbl\xi\dbr}}\wh{\SpaceFl}_{H,\BaseFldOfLocSht\dbl\xi\dbr}^{(n)}\whtimes_{\Spf {\BaseFldOfLocSht\dbl\xi\dbr}} \Spec \BaseFldOfLocSht\dbl\xi\dbr/(\xi^i)
\]
is a locally closed subscheme of $\wh{\SpaceFl}_{H,\BaseFldOfLocSht\dbl\xi\dbr}^{(n)}\whtimes_{\Spf {\BaseFldOfLocSht\dbl\xi\dbr}} \Spec \BaseFldOfLocSht\dbl\xi\dbr/(\xi^i)$, and hence a scheme of finite type over $\Spec \BaseFldOfLocSht\dbl\xi\dbr/(\xi^i)$ with underlying topological space $\CM^n(1)$ independent of $i$. Moreover, $\CM^n(1)$ is \mbox{(quasi-)} projective, because it is closed in the ind-quasi-projective ind-scheme $\SpaceFl_\BP\whtimes_\BaseOfD\Spec\BaseFldOfLocSht$ which is even projective if $\BP$ is parahoric. Now our claim follows from \cite[I$_{\rm new}$, Corollary~10.6.4]{EGA}.   
\end{proof}

\bigskip

For each non-negative integer $n$ we define the following sub functor of $\RZ$
\begin{eqnarray*}
\ul{\CM}_n\colon (\Nilp_{\BaseFldOfLocSht\dbl\xi\dbr})^o &\longto & \Sets\\
S&\longmapsto & \Bigl\{\,(\ul\CL,\ppsi)\in \RZ (S)\colon\text{ for any point $s$ in $S$,} \\ 
&&\text{\quad}~\scrH(\iota)(\delta_s) \text{ is bounded by}~2n\rho\dual \,\Bigr\}.
\end{eqnarray*}
This functor is represented by an ind-scheme $\CM_n$ which is  the formal completion of $\RZ$ along the quasi-compact closed subscheme $(\CM^n)_{red}$.

\begin{lemma}\label{LemmaMn}
$\CM_n$ is a formal scheme formally of finite type over $\Spf \BaseFldOfLocSht\dbl\xi\dbr$.
\end{lemma}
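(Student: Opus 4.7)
The plan is to identify $\CM_n$ with the formal completion of a fixed quasi-projective formal subscheme of $\RZ$ along the quasi-compact scheme $(\CM^n)_\red$, and then invoke the classical representability of such formal completions.

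First I would use Theorem~\ref{ThmModuliSpX} to write $\RZ=\dirlim_\alpha Y_\alpha$ as a directed union, under closed immersions, of quasi-projective formal subschemes $Y_\alpha$ over $\Spf\BaseFldOfLocSht\dbl\zeta\dbr$. By Lemma~\ref{M^n} the reduced scheme $(\CM^n)_\red$ is quasi-projective, in particular quasi-compact, so it is contained in some $Y_{\alpha_0}$. The formal completion $Y_{\alpha_0}^\wedge$ of the noetherian quasi-projective formal scheme $Y_{\alpha_0}$ along $(\CM^n)_\red$ is then, by standard theory, a noetherian adic formal scheme whose reduced subscheme equals $(\CM^n)_\red$; since $Y_{\alpha_0}$ is of finite type over $\Spf\BaseFldOfLocSht\dbl\zeta\dbr$ and $(\CM^n)_\red$ is quasi-projective over $\BaseFldOfLocSht$, the formal scheme $Y_{\alpha_0}^\wedge$ is formally of finite type over $\Spf\BaseFldOfLocSht\dbl\zeta\dbr$.

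The heart of the argument is to verify that $\CM_n=Y_{\alpha_0}^\wedge$ as functors. One direction is tautological: any morphism into $Y_{\alpha_0}^\wedge$ yields a morphism into $\RZ$ whose reduction factors through $(\CM^n)_\red$, hence an object of $\CM_n$. For the reverse, take an affine $S\in\Nilp_{\BaseFldOfLocSht\dbl\zeta\dbr}$ and an $S$-point $(\ul\CL,\psi)\in\CM_n(S)$ corresponding to $f\colon S\to\RZ$ with $f(S_\red)\subseteq(\CM^n)_\red$. Quasi-compactness of $S$ forces $f$ to factor through some $Y_\beta$ with $\beta\geq\alpha_0$; I would then argue it necessarily factors through $Y_{\alpha_0}$, by composing with $\iota_*\colon\RZ\to\wh\SpaceFl_H$ and exploiting that the image of $S_\red$ lies in the reduced subscheme of the noetherian adic formal scheme $\wh\SpaceFl_H^{(n)}$ (from \cite[Proposition~5.5]{H-V}). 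The noetherian character of $\wh\SpaceFl_H^{(n)}$, combined with $\iota_*$ being a locally closed embedding, confines all thickenings of $(\CM^n)_\red$ to a uniformly bounded subscheme, which (after enlarging $\alpha_0$ if necessary) can be taken inside $Y_{\alpha_0}$.

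The main obstacle is precisely this stabilization of the formal completion within the ind-structure. For a general ind-scheme $\dirlim Y_\alpha$, the formal completion along a quasi-compact closed subscheme may fail to be formally of finite type: the formal completion of $\mathbb A^\infty$ at the origin, for example, has infinite-dimensional tangent space. What saves us here is that the boundedness condition defining $\wh\SpaceFl_H^{(n)}$ is itself noetherian and adic, so it controls the infinitesimal directions transverse to $(\CM^n)_\red$; once this control is carefully established, the identification $\CM_n=Y_{\alpha_0}^\wedge$ follows, and the lemma becomes an immediate consequence of the classical formal completion theory.
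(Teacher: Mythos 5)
There is a genuine gap, and it sits exactly where you located ``the main obstacle'': the identification $\CM_n=Y_{\alpha_0}^\wedge$ for a \emph{single fixed} $Y_{\alpha_0}$ is false, and the mechanism you propose to force it cannot work. The defining condition of $\CM_n$ bounds the quasi-isogeny $\delta$ only at the \emph{closed points} of $S$; over an infinitesimal thickening the pole order of $\delta$ genuinely grows with the order of nilpotence. Indeed, by rigidity $\delta$ is reconstructed from $\bar\delta=j^*\delta$ essentially as $\tauLoc_{\ul\BL_0}^N\circ\hat\sigma^{N*}(\bar\delta)\circ\tauLoc_{\ul\CL}^{-N}$ once $\CI^{q^N}=0$, and each factor of $\tauLoc$ contributes poles. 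So the $S$-points of $\CM_n$ do not factor through any fixed stratum $Y_{\alpha_0}$ of the ind-structure; what is true (and is the content of the Claim in the paper's proof) is that for each fixed infinitesimal order $c$ there is an $m_0=m_0(c)$ such that the $c$-th order thickening factors through $\CM^{m_0(c)}$, with $m_0(c)\to\infty$. Your appeal to $\iota_*$ and the noetherianness of $\wh\SpaceFl_{H,\BaseFldOfLocSht}^{(n)}$ cannot repair this: the formal completion of the ambient ind-scheme $\wh\SpaceFl_{H,\BaseFldOfLocSht}$ along $(\wh\SpaceFl_{H,\BaseFldOfLocSht}^{(n)})_\red$ is itself \emph{not} noetherian, because the transverse directions in an affine Grassmannian are infinite-dimensional (already the tangent space at the base point is $\Fs\Fl_r\bigl(k\dpl z\dpr\bigr)/\Fs\Fl_r\bigl(k\dbl z\dbr\bigr)$). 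Boundedness of $\delta$ at closed points imposes no a priori bound on $\delta$ over nilpotents unless one uses the shtuka structure.

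The ingredients that actually make the argument work, and which are absent from your sketch, are: (i) an algebraization step (the paper's Lemma~\ref{remdejong}) showing that the universal bounded local $\BP$-shtuka over $\Spf R/J^c$ comes from one over $\Spec R/J^c$; (ii) rigidity of quasi-isogenies (Proposition~\ref{PropRigidityLocal}) to lift $\delta_n$ from $R/J$ to $R/J^c$; and (iii) quasi-compactness of $\Spec R/J^c$ to bound the lifted quasi-isogeny by $2m_0\rho\dual$ for some $m_0$ depending on $c$. This yields the isomorphisms $R_m/J^cR_m\isoto R_{m_0}/J^cR_{m_0}$ for $m\ge m_0(c)$, but one is still not done: since $m_0(c)$ is unbounded in $c$, one must separately verify that $R=\invlim R_m$ is $J$-adic and noetherian, which the paper does by showing the chains $\Fa_m+J^c$ stabilize, that $\CJ_{c+1}+\CJ_1^c=\CJ_c$, and that $\CJ_1/\CJ_2$ is a finitely generated $R$-module, before invoking \cite[Proposition~2.5]{RZ}. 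None of this is subsumed by ``classical formal completion theory'' for a noetherian ambient scheme, because no noetherian ambient scheme containing all of $\CM_n$ exists.
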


To prove the lemma we need to start with the following definition. Recall that $R_{\hat{Z}}=\kappa\dbl\xi\dbr$.

\begin{definition}
Let $R=\invlim R_m\in\Nilp_{\kappa\dbl\xi\dbr}$ where $(R_m,u_{m,m'})$ is a projective system of discrete rings indexed by $\BN_0$. Suppose that all homomorphisms $R\to R_m$ are surjective, and that all kernels $I_m:=\ker (u_{m,0}\colon R_m\to R_0)\subset R_m$ are nilpotent. A local $\BP$-shtuka over $\Spf R$ is a projective system $(\ul\CL_m)_{m\in\BN_0}$ of local $\BP$-shtukas $\ul\CL_m$ over $R_m$ with $\ul\CL_{m-1}\cong\ul\CL_m\otimes_{R_m}R_{m-1}$.
\end{definition}

\begin{lemma}\label{remdejong}
Let $R$ in $\Nilp_{\kappa\dbl\xi\dbr}$ be as in the above definition. The pull back functor defines an equivalence between the category of local $\BP$-shtukas over $\Spec R$  bounded by $\hat{Z}$  and the category of local $\BP$-shtukas over $\Spf R$  bounded by $\hat{Z}$.
\end{lemma}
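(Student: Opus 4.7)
The plan is to reduce the equivalence to a statement about compatible systems of finite-level torsors via Proposition~\ref{formal torsor prop}. Setting $\BP_k := \BP \times_\BD \Spec\BaseOfD\dbl z\dbr/(z^k)$ and $\BD_k := \Spec\BaseOfD\dbl z\dbr/(z^k)$, an $L^+\BP$-torsor on $\Spec R$ corresponds to a compatible family $(\CP_k)_{k}$ of $\BP_k$-torsors on $\BD_k\whtimes_\BaseOfD\Spec R$, and a local $\BP$-shtuka over $\Spec R$ adds compatible Frobenius isomorphisms of their associated $L\genericG$-torsors. Similarly, a local $\BP$-shtuka over $\Spf R$ in the sense defined above unpacks into a doubly indexed family $(\CP_{k,m})_{k,m}$ of $\BP_k$-torsors on $\BD_k\whtimes_\BaseOfD\Spec R_m$ with compatibilities in both indices.

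For essential surjectivity, given such a $(\CP_{k,m})_{k,m}$, I would fix $k$ and construct $\CP_k$ over $\BD_k\whtimes_\BaseOfD\Spec R$. By Remark~\ref{B-L remark 1}, each $\CP_{k,m}$ is \'etale-locally trivial on $\Spec R_m$, and since the $I_m$ are nilpotent the \'etale site of $\Spec R_m$ agrees with that of $\Spec R_0$; hence one may choose a cofinal \'etale cover $S_0'\to\Spec R_0$ and lift it uniquely and compatibly to $S_m'\to\Spec R_m$, producing in the limit an \'etale cover $S'\to\Spec R$. The transition $1$-cocycles for the system $(\CP_{k,m})_m$ live in $\BP_k\bigl(\BD_k\whtimes_\BaseOfD(S_m'\times_{\Spec R_m}S_m')\bigr)$ and assemble, via the completeness $R = \invlim R_m$, into a $1$-cocycle over $\BD_k\whtimes_\BaseOfD(S'\times_{\Spec R}S')$, which is effective by descent of affine schemes and yields the desired $\CP_k$. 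The Frobenius isomorphisms glue by the same argument applied to the associated $L\genericG$-torsors, producing a local $\BP$-shtuka $\ul\CL$ over $\Spec R$ whose pullback reconstructs $(\ul\CL_m)_m$. Boundedness by $\hat Z$ is automatic in both directions: by Proposition~\ref{PropBoundedClosed} it is a closed condition, and therefore holds over $\Spec R$ if and only if it holds over every $\Spec R_m$.

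Fully faithfulness follows from the same descent argument: a morphism of local $\BP$-shtukas over $\Spec R$ is uniquely determined by, and can be reassembled from, its compatible reductions to the $\Spec R_m$. The main obstacle will be the effectivity of descent in the essential-surjectivity step---one must verify that the compatible \'etale local trivializations chosen over the $R_m$ can be selected so as to descend to an \'etale cover of $\Spec R$ itself, and not merely to the formal thickening. This is precisely where the combination of nilpotence of each $I_m$ and the completeness $R = \invlim R_m$ becomes indispensable, since it is the nilpotence that identifies the \'etale topoi level-by-level, while the completeness allows the resulting cocycles to live genuinely over $\Spec R$.
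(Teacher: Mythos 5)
Your construction of the underlying $L^+\BP$-torsor over $\Spec R$ is essentially sound and close in spirit to what the paper does (the paper trivializes $\ul\CL_0$ over an \'etale cover of $R_0$ and invokes the topological invariance of the \'etale site, \cite[Th\'eor\`eme I.8.3]{SGA1}, to lift it uniquely to an \'etale $R$-algebra $R'$). But there is a genuine gap at the heart of the argument: the Frobenius $\tauLoc$ does \emph{not} ``unpack into a doubly indexed family of $\BP_k$-torsor data'' and does \emph{not} ``glue by the same argument.'' The isomorphism $\tauLoc$ lives on the associated $L\genericG$-torsors, and $L\genericG$ is an ind-scheme, not a pro-object in $z$: after trivialization, $\tauLoc_m$ is given by an element $b_m\in L\genericG(R'_m)$, i.e.\ a matrix with entries in $R'_m\dbl z\dbr[\tfrac{1}{z}]$ whose pole order in $z$ may a priori grow with $m$. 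A compatible system $(b_m)_m$ therefore need not define an element of $L\genericG(R')=L\genericG(\invlim R'_m)$, because $L\genericG(\invlim R'_m)\neq\invlim L\genericG(R'_m)$ in general. This is exactly the failure mode your proposal does not address.

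Relatedly, you dismiss the boundedness by $\hat Z$ as ``automatic in both directions'' via Proposition~\ref{PropBoundedClosed}; in fact it is the indispensable input that makes the algebraization work, not an afterthought. The paper's proof uses that $\zeta^e=0$ on $R$ and that $Z=\hat Z\whtimes_{\Spf\BaseOfD\dbl\zeta\dbr}\Spec\BaseOfD$ is quasi-compact to find a single $n$ with $Z_e\subseteq\SpaceFl^{(n)}_{\SL_{r,\BD}}\times_{\Spec\BaseOfD}\Spec\BaseOfD\dbl\zeta\dbr/(\zeta^e)$, whence \emph{all} the $b_m$ factor through the fixed subscheme $L\genericG^{(n)}$, which is affine. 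Only then does the compatible system of ring homomorphisms $\CO(L\genericG^{(n)})\to R'_m$ assemble, via $R'=\invlim R'_m$, into a morphism $b_\infty\colon\Spec R'\to L\genericG^{(n)}$ defining the Frobenius over $\Spec R'$, after which one descends to $\Spec R$. Without this uniform bound on the relative position the essential-surjectivity step fails, and your proof as written does not close this hole.
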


\begin{proof}
Since $R$ is in $\Nilp_{\kappa\dbl\xi\dbr}$ there is an integer $e\in \BN$ such that $\xi^e=0$ on $R$. Let $\ul{\CL}:=(\ul{\CL}_m)_{m\in \BN_0}$ be a local $\BP$-shtuka over $\Spf R$. By Proposition~\ref{formal torsor prop} there is an \'etale covering $R_0'\to R_0$ which trivializes $\ul{\CL}_0$.  By \cite[Th\'eor\`eme I.8.3]{SGA1} there is a unique \'etale $R$-algebra $R'$ with $R'\otimes_R R_0\cong R_0'$. As in \cite[Proposition~2.2(c)]{H-V} this gives rise to compatible trivializations $\ul{\CL}_m \otimes_{R_m} R_m'\cong ((L^+\BP)_{R_m'},b_m\hat{\sigma}^\ast)$ over $R'_m:= R'\otimes_R R_m$ for all $m$. Here $b_m \in L\genericG(R_m')$ and $b_m \otimes_{R_m'} R_{m-1}' =b_{m-1}$.

By \cite[Proposition~1.3]{PR2} and \cite[Proposition~2.1]{AH_Unif} we may take a faithful representation $\iota\colon\BP \hookrightarrow \SL_{r,\BD}$ and consider the induced closed immersion $L\genericG \into L\SL_{r,\BD}$. The ind-scheme structure on $\SpaceFl_{\SL_{r,\BD}}$ is given by $\SpaceFl_{\SL_{r,\BD}}=\dirlim\SpaceFl^{(n)}_{\SL_{r,\BD}}$ where $\SpaceFl^{(n)}_{\SL_{r,\BD}}$ is defined by condition~\eqref{EqBdHV}. Let $L\SL_{r,\BD}^{(n)}=L\SL_{r,\BD}\times_{\SpaceFl_{\SL_{r,\BD}}}\SpaceFl_{\SL_{r,\BD}}^{(n)}$. Then 
\[
L\SL_{r,\BD}^{(n)}(S)\;=\;\bigl\{\,g\in L\SL_{r,\BD}(S)\colon \text{ all $j\times j$-minors of $g$ lie in $z^{n(j^2-jr)}\CO_S(S)\dbl z\dbr\es\forall\;j$}\,\bigr\}.
\]
This implies that $L\SL_{r,\BD}^{(n)}$, and hence also $L\genericG^{(n)}:=L\genericG\times_{L\SL_{r,\BD}}L\SL_{r,\BD}^{(n)}$ is an infinite dimensional affine scheme over $\BaseOfD$. By Remark~\ref{RemReflexRing}\ref{RemReflexRing_F} we may choose a representative $\hat{Z}_{\wt{R}}\subset\wh\SpaceFl_{\BP,\wt{R}}$ of the bound $\hat{Z}$ over a finite extension $\wt{R}$ of $R_{\hat{Z}}=\kappa\dbl\xi\dbr$ with $\Quot(\wt R)$ Galois over $\Quot(R_{\hat{Z}})$. By Remark~\ref{RemBound} the boundedness by $\hat{Z}$ can be checked using $\hat{Z}_{\wt R}$. Since $Z_{\wt{R},e}:=\hat{Z}_{\wt{R}}\whtimes_{\Spf \wt{R}} \Spec \wt{R}/(\xi^e)$ has the same underlying topological space as $Z_{\wt R}:=\hat{Z}_{\wt R}\whtimes_{\wt R}\Spec\kappa_{\wt R}$, it is quasi-compact. So there is an $n\in \BN$ such that $Z_{\wt{R},e}\subseteq \SpaceFl^{(n)}_{\SL_{r,\BD}}\whtimes_{\BaseOfD}\Spec \wt{R}/(\xi^e)$ by \cite[Lemma~5.4]{H-V}. As one sees from the following diagram
\[
\xymatrix @C+1pc {
\Spec R_m'\otimes_{\kappa\dbl\xi\dbr}\wt{R}\ar[d]\ar@{-->}^{b_m\times\id\qquad}[r] & L\SL^{(n)}_{r,\BD}\;\whtimes_{\BaseOfD}\;\Spec \wt{R}/(\xi^e) \ar[d] \ar[r] & L\SL_{r,\BD}\;\whtimes_{\BaseOfD}\;\Spec \wt{R}/(\xi^e) \ar[d]\\
Z_{\wt{R},e} \ar[r] & \SpaceFl^{(n)}_{\SL_{r,\BD}}\;\whtimes_{\BaseOfD}\;\Spec \wt{R}/(\xi^e) \ar[r]& \SpaceFl_{\SL_{r,\BD}}\;\whtimes_{\BaseOfD}\;\Spec \wt{R}/(\xi^e)\,,
}
\] 
the morphism $b_m\times\id\colon \Spec R_m'\otimes_{\kappa\dbl\xi\dbr}\wt{R}\to L\genericG$ factors through $L\genericG^{(n)}=L\genericG\times_{L\SL_{r,\BD}}L\SL_{r,\BD}^{(n)}$ for all $m$. Since $L\genericG^{(n)}$ is affine, the compatible collection of morphisms $b_m\colon \Spec R_m'\otimes_{\kappa\dbl\xi\dbr}\wt{R}\to L\genericG^{(n)}$ is given by a compatible collection of homomorphisms $\CO(L\genericG^{(n)})\to R'_m\otimes_{\kappa\dbl\xi\dbr}\wt{R}$. It corresponds to a homomorphism $\tilde b_\infty^*\colon\CO(L\genericG^{(n)})\to R'\otimes_{\kappa\dbl\xi\dbr}\wt{R}$, because $R'=\invlim R'_m$ and $\wt R$ is a finite free $\kappa\dbl\xi\dbr$-module. By Remark~\ref{RemReflexRing}\ref{RemReflexRing_F}, $\gamma(\hat{Z}_{\wt{R}})=\hat{Z}_{\wt{R}}$ for all $\gamma\in\Aut_{\kappa\dbl\xi\dbr}(\wt R)$ and thus by construction the homomorphism $\tilde b_\infty^*$ is invariant under $\Aut_{\kappa\dbl\xi\dbr}(\wt R)$. It follows that $\tilde b_\infty^*$ factors through a homomorphism $b_\infty^*\colon\CO(L\genericG^{(n)})\to R'$. The latter corresponds to a morphism $b_\infty\colon\Spec R'\to L\genericG^{(n)}$. This gives the local $\BP$-shtuka $((L^+\BP)_{R'},b_\infty\hat{\sigma}^\ast)$ over $\Spec R'$ bounded by $\hat{Z}$, which carries a descent datum from the $\ul{\CL}_m$ and hence induces a local $\BP$-shtuka over $\Spec R$.
\end{proof}

Let us come back to the 

\begin{proof}[Proof of Lemma~\ref{LemmaMn}]
For each $m\geq n$ let $\CM_n^m$ be the formal completion of $\CM^m$ along $(\CM_n)_\red$. It is a noetherian adic formal scheme over $\BaseFldOfLocSht\dbl\xi\dbr$. Let $U$ be an affine open subscheme of $(\CM_n)_\red$. By \cite[I$_{\rm new}$, Proposition~2.3.5]{EGA} this defines an affine open formal subscheme $\Spf R_m$ of $\CM_n^m$ with underlying set $U$.  Let $R$ be the inverse limit of the projective system $R_{m+1}\to R_m$ and let $\Fa_m\subset R$ denote the ideal such that $R_m=R\slash \Fa_m$. Let $J$ be the inverse image in $R$ of the largest ideal of definition in $R_n$. We want to show that $R$ is $J$-adic. We make the following

\medskip\noindent
\emph{Claim.} For any integer $c>0$ there is an integer $m_0$ such that for any $m\geq m_0$ the natural map $R_{m}/ J^c R_m \to R_{m_0}/ J^c R_{m_0}$ is an isomorphism.

\medskip\noindent
To prove the claim let $\ul\CL_m$ be the universal bounded local $\BP$-shtuka over $\Spf R_m$. Consider the local $\BP$-shtuka $(\ul\CL_m)_m$ over $\Spf R$ and its pullback over $\Spf R/J^c$. 
The latter comes from a local $\BP$-shtuka $\ul\CL$ over $\Spec R/J^c\in\Nilp_{\BaseFldOfLocSht\dbl\xi\dbr}$ by Lemma~\ref{remdejong}.
By rigidity of quasi-isogenies the quasi-isogeny $\delta_n$ over \mbox{$R\slash J=R_n/J$} lifts to a quasi-isogeny $\delta$ over $R\slash J^c$. Since $\Spec R/J^c$ is quasi-compact, the quasi-isogeny $\scrH^1(\iota)(\delta)$ satisfies condition~\eqref{EqBdHV} for some $m_0$, that is, it is bounded by $2m_0\rho\dual$. By the universal property of $\CM_n^{m_0}$ the tuple $(\ul\CL, \delta)$ induces a morphism $R_{m_0}\to R/J^c$ making the following diagram commutative, from which the claim follows
\[
\xymatrix @C+3pc {
R\ar@{->>}[d] \ar@{->>}[r] & R_{m}\ar@{->>}[d] \ar@{->>}[r]& R_{m_0}\ar@{-->}[dll]\ar@{->>}[d]\\
R\slash J^c \ar@{->>}[r] & R_{m}\slash J^c R_{m}\ar@{->>}[r] & R_{m_0}\slash J^c R_{m_0}.
}
\]

The claim implies that the chain $\Fa_n+J^c \supseteq \Fa_{n+1}+J^c \supseteq \ldots \supseteq \Fa_i+J^c \supseteq \ldots$ stabilizes. Now set $\CJ_c:=\bigcap_i\Fa_i+J^c=\Fa_m+J^c$ for $m\gg0$ and consider the descending chain $\CJ_1 \supseteq \CJ_2 \supseteq \ldots$. Note that $\CJ_1=J$ and $\CJ_{c+1}+\CJ_1^c=\CJ_c$. Since $\CJ_1\slash \CJ_2\cong JR_m/J^2R_m$ for $m\gg0$, it is a finitely generated $R$-module. Therefore $\CM_n$ is a locally noetherian adic formal scheme over $\Spf\BaseFldOfLocSht\dbl\xi\dbr$ by \cite[Proposition~2.5]{RZ}. It is formally of finite type because $(\CM_n)_\red=(\CM^n)_\red$ is quasi-projective over $\BaseFldOfLocSht$ by Lemma~\ref{M^n}.
\end{proof}

From now on we use that $\ul\BL_0$ is decent with the integer $s$. In the sequel we consider points $x\in\RZ(K)$ for varying field extensions $K$ of $\BaseFldOfLocSht$. For two points $x:= (\ul{\CL}, \delta)$ and $x':= (\ul{\CL}', \delta')$ in $\RZ(K)$ we define
\begin{equation}\label{d_tilde}
\tilde{d}(x,x'):= \min\big\{ n\in\BN_0\colon \scrH^1(\iota)(\delta^{-1}\delta')\text{ is bounded by } 2n\rho\dual\big\}.
\end{equation}
By the definition of ``being bounded by $2n\rho\dual$'' in \eqref{EqBdHV} we conclude that if $x''\in\RZ(K)$ is a third point then $\tilde{d}(x,x'')\le\tilde{d}(x,x')+\tilde{d}(x',x'')$. Moreover, in the situation where $\delta=g$ and $\delta'=g'$ for $g,g'\in L\genericG(K)$, as well as $x=((L^+\BP)_{K},g^{-1}b\hat\sigma^*(g),g)$ and $x'=((L^+\BP)_{K},(g')^{-1}b\hat\sigma^*(g'),g')$, we also write $\tilde d(g,g'):=\tilde d(x,x')$. Note that a point $x\in\RZ(K)$ belongs to $\CM_n$ if and only if $\tilde d((\ul\BL_0,\id),x)\le n$.

Although we will not use this, note that $\tilde d$ is a metric on $\RZ$. This follows from the fact that \eqref{EqBdHV} for $n=0$ together with Cramer's rule implies that $\scrH^1(\iota)(\delta_1^{-1}\delta_2)$ is an isomorphism $\CV(\iota_*\ul\CL_2)\isoto\CV(\iota_*\ul\CL_1)$; compare the discussion around \EqBounded.

\begin{lemma}\label{unif_distribution_lemma}
For every integer $c>0$ there is an integer $d_0>0$ with the following property. For every $g\in L\genericG(K)$ with $\tilde d(g,\,b\,\hat{\sigma}^\ast(g)) <c$ there is a $g'\in L\genericG(\BaseFldOfLocSht)$ with $\tilde d(g,g')\le d_0$.
\end{lemma}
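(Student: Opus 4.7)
The plan is to use the decency equation to transfer the Frobenius-stability of $g$ into an approximate $\hat\sigma^{s}$-stability, and then to invoke a Lang-type existence result to produce a nearby $\ell$-rational point, following the strategy of~\cite[Lemma~5.10]{H-V} adapted to the parahoric setting.

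First I would record two elementary properties of the pseudo-metric $\tilde d$. The triangle inequality $\tilde d(x_1,x_3)\le \tilde d(x_1,x_2)+\tilde d(x_2,x_3)$ follows from the fact that the bound $2n\rho^\vee$ is additive under multiplication of matrices in $\SL_r$ (compare the argument of~\cite[Proposition~4.6]{H-V}: a product of quasi-isogenies bounded by $2n_i\rho^\vee$ is bounded by $2(n_1+n_2)\rho^\vee$, via Cramer's rule on the wedge products occurring in \eqref{EqBdHV}). Secondly, $\tilde d$ is $\hat\sigma$-equivariant, $\tilde d(\hat\sigma^*(x_1),\hat\sigma^*(x_2))=\tilde d(x_1,x_2)$, since the defining condition on $\iota_*(\delta^{-1}\delta')$ is preserved by pull-back along $\hat\sigma$.

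Combining these with the hypothesis $\tilde d(g,b\hat\sigma^*(g))<c$, I would iterate to obtain, for each $k\ge 1$,
\[
\tilde d\bigl(g,(b\hat\sigma^*)^k(g)\bigr)\;\le\;\sum_{i=0}^{k-1}\tilde d\bigl((b\hat\sigma^*)^i(g),(b\hat\sigma^*)^{i+1}(g)\bigr)\;<\;kc.
\]
Taking $k=s$ and invoking the decency equation \eqref{EqDecency}, which gives $(b\hat\sigma^*)^s=s\nu_b(z)\hat\sigma^{s*}$, yields
\[
\tilde d\bigl(g,\; s\nu_b(z)\cdot\hat\sigma^{s*}(g)\bigr)\;<\;sc.
\]
Equivalently, $u:=g^{-1}\cdot s\nu_b(z)\cdot\hat\sigma^{s*}(g)$ lies in a quasi-compact $L^+\BP$-stable closed subscheme of $L\genericG$, defined over $\ell$ (using that $s\nu_b(z)\in L\genericG(\ell)$ by Remark~\ref{RemDecent}).

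The hard part is the final step: producing the rational approximation $g'$ with a bound $d_0$ depending only on $c$. The idea is to seek $g'=gh$ with $h\in L\genericG(K)$ bounded (in the sense of $\tilde d$) and with $\hat\sigma^{s*}(g')=g'$, which would force $g'\in L\genericG(\ell)$. The equation $\hat\sigma^{s*}(gh)=gh$ translates, via the relation $\hat\sigma^{s*}(g)=s\nu_b(z)^{-1}gu$, into a Lang-type equation for $h$ whose right-hand side is controlled by $u$ and the quasi-compact subscheme of $L\genericG$ in which $u$ varies. Solvability with a uniform bound is then obtained by truncating modulo $z^m$ and applying Lang's theorem on the smooth linear algebraic groups $\Res_{\BD_m/\BaseOfD}(\BP_m)$ in the spirit of Lemma~\ref{LSisnonemptyA}, together with Hensel-type lifting to remove the truncation; the uniformity in $m$ and in the choice of $u$ is guaranteed by the quasi-compactness of the subscheme in which $u$ varies. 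The principal obstacle is precisely to extract, from this Lang-type procedure, a bound $d_0=d_0(c)$ on $\tilde d(g,g')=\tilde d(1,h)$ that is independent of the field $K$ and of the particular $g$; this requires identifying the bounded locus traced out by $h$ with the image of a fixed finite union of $L^+\BP$-double cosets in $L\genericG$, which is possible because the quasi-compact locus containing $u$ meets only finitely many such double cosets by~\cite[Theorem~1.4]{PR2}.
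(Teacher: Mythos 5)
Your opening reductions are sound: $\tilde d$ is left-invariant and $\hat\sigma$-equivariant, the bounds $2n\rho\dual$ add under composition, and iterating the hypothesis together with the decency equation \eqref{EqDecency} does give $\tilde d\bigl(g,\,s\nu_b(z)\,\hat\sigma^{s*}(g)\bigr)<sc$, i.e.\ $u:=g^{-1}\,s\nu_b(z)\,\hat\sigma^{s*}(g)$ ranges in a fixed quasi-compact union of $L^+\BP$-double cosets. This parallels the reduction in \cite[Subsection~2.1]{RZ2}. The genuine gap is the final step, which you yourself flag as the hard part but whose sketch does not work. First, the equation you derive for $h$ (setting $g'=gh$ with $\hat\sigma^{s*}(g')=g'$) reads $h=\bigl(g^{-1}s\nu_b(z)^{-1}g\bigr)\,u\,\hat\sigma^{s*}(h)$, and the conjugate $g^{-1}s\nu_b(z)^{-1}g$ is \emph{not} controlled: it depends on $g$ and can be arbitrarily far from $L^+\BP$ in the $\tilde d$-metric, so the right-hand side is not confined to any fixed bounded locus. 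Second, the Lang/Hensel mechanism of Lemma~\ref{LSisnonemptyA} applies only to the positive loop group, via the smooth connected finite-dimensional truncations $\Res_{\BD_n/\Spec\BaseOfD}(\BP_n)$; the elements $h$ and $u$ here live in $L\genericG$ and have poles in $z$, so there is no system of algebraic-group truncations to which Lang's theorem applies, and quasi-compactness of the locus of $u$ does not by itself yield a solution $h$, let alone one lying in a double-coset region that is uniform in $g$ and $K$.

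That uniformity is precisely the content of the result the paper invokes: the proof in the text is a one-line reduction to the finiteness theorem of Rapoport and Zink \cite[Theorem~1.4 and Subsection~2.1]{RZ2} in the Bruhat--Tits building, transported to the present setting by the fact (Landvogt \cite{Lan1}) that $\iota\colon\BP\into\SL_{r,\BD}$ induces an injective isometric embedding $\scrB(\genericG)\to\scrB(H)$ under which $\tilde d$ compares with the building metric. The theorem of \cite{RZ2} is proved by building-theoretic convexity arguments after reduction to $\GL_n$ via Landvogt's embedding, not by a Lang-type fixed-point construction; completing your route would amount to reproving it. Either repair your argument by actually carrying out that building-theoretic proof, or replace the final step by the citation of \cite{RZ2} as the paper does.
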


\begin{proof}
This is just a reformulation of \cite[Theorem~1.4 and Subsection~2.1]{RZ2} taking into account that by functorial properties of Bruhat--Tits buildings (see \cite{Lan1}) the representation $\iota$ induces an injective isometric map of Bruhat--Tits buildings $\scrB(\genericG)\to \scrB(H)$.
\end{proof}

\begin{lemma}\label{UnifDist2}
There is an integer $d_0\in \BN_0$ such that~ $\sup\big\{\tilde{d}(x,\RZ (\BaseFldOfLocSht))\colon x\in \RZ \big\}\leq d_0$.
\end{lemma}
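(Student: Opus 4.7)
The plan is to deduce the statement from Lemma~\ref{unif_distribution_lemma}, exploiting the quasi-compactness of the bound $Z$. Any point $x=(\ul{\CL},\delta)\in\RZ(K)$ over a field $K$ may, after an \'etale base change and the choice of a trivialization of the $L^+\BP$-torsor, be represented by an element $g\in L\genericG(K)$, so that $\delta$ is given by $g$, the Frobenius takes the form $\tauLoc=g^{-1}b\hat{\sigma}^*(g)\hat{\sigma}^*$, and the boundedness of $\ul\CL$ by $\hat Z$ translates into the condition $g^{-1}b\hat{\sigma}^*(g)\in Z(K)$. Since $Z$ is quasi-compact and $\SpaceFl_H=\dirlim\SpaceFl_H^{(n)}$ with $\SpaceFl_H^{(n)}$ cut out by condition~\eqref{EqBdHV}, the image of $Z$ under $\iota_*\colon\SpaceFl_\BP\to\SpaceFl_H$ is contained in some $\SpaceFl_H^{(c)}$ with $c$ depending only on $\hat Z$. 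Unwinding definition~\eqref{d_tilde}, this amounts to the uniform estimate $\tilde d(g,b\hat{\sigma}^*(g))\le c$, independently of the chosen $x$.

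With this uniform bound at hand, Lemma~\ref{unif_distribution_lemma} supplies an integer $d_0'$ depending only on $c$ (hence only on $\hat Z$), together with an element $g'\in L\genericG(\ell)$ satisfying $\tilde d(g,g')\le d_0'$. The pair $\ul\CL':=\bigl((L^+\BP)_\ell,\,(g')^{-1}b\hat{\sigma}^*(g')\hat{\sigma}^*\bigr)$ together with the quasi-isogeny $g'\colon\ul\CL'\to\ul\BL_0$ defines a point at $\tilde d$-distance at most $d_0'$ from $x$, but a priori only in the \emph{unbounded} Rapoport-Zink space $\ul\CM_{\ul\BL_0}$.

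The main obstacle is to upgrade this to an $\ell$-point of the \emph{bounded} functor $\RZ$. Since left multiplication by $b$ and the Frobenius $\hat{\sigma}^*$ preserve $\tilde d$, the triangle inequality only yields $\tilde d(g',b\hat{\sigma}^*(g'))\le c+2d_0'$, so $(g')^{-1}b\hat{\sigma}^*(g')$ lands in $\SpaceFl_H^{(c+2d_0')}$ rather than necessarily in $Z$. To close this gap one fixes once and for all an $\ell$-rational base point $x_0\in\RZ(\ell)$, whose existence follows from the decency of $b$: applying Lang's theorem to the finite-dimensional Weil restrictions $\wt G_n=\Res_{\BD_n/\Spec\BaseOfD}\BP_n$ as in Lemma~\ref{LSisnonemptyA} one rigidifies the Frobenius over $\ell$ and thereby produces an $\ell$-rational point of $\CM^0$ from Lemma~\ref{M^n}. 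One then observes that the group $J_b(\BaseOfD\dpl z\dpr)=\QIsog_\ell(\ul\BL_0)$ acts on $\RZ$ by $\tilde d$-isometries (Remark~\ref{RemJActsOnRZ}) and preserves $\RZ(\ell)$, since for $j\in J_b(\BaseOfD\dpl z\dpr)$ the action $(\ul\CL,\delta)\mapsto(\ul\CL,j\,\delta)$ leaves the shtuka $\ul\CL$ untouched. Combining Lemma~\ref{unif_distribution_lemma} with the decency equation~\eqref{EqDecency} shows that the orbit $J_b(\BaseOfD\dpl z\dpr)\cdot x_0$ is $d_0$-dense in $\RZ$ for a uniform constant $d_0\ge d_0'$; this $d_0$ is the constant asserted by the lemma.
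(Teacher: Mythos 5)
Your first two steps reproduce the paper's own argument exactly: the quasi-compactness of $Z$ gives a uniform constant $c$ with $\tilde d(g,b\hat\sigma^\ast(g))<c$ for every point of $\RZ$, and Lemma~\ref{unif_distribution_lemma} then produces $g'\in L\genericG(\ell)$ with $\tilde d(g,g')\le d_0$. At this point the paper simply takes $x':=\bigl((L^+\BP)_\ell,(g')^{-1}b\hat\sigma^\ast(g'),g'\bigr)$ to be the desired point of $\RZ(\ell)$ and concludes; it does not attempt the ``upgrade'' that occupies your final paragraph. Your observation that $x'$ is a priori only a point of the unbounded space $\ul\CM_{\ul\BL_0}(\ell)$ is a fair one, but the repair you propose does not close the gap.

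Two steps of that repair are unproven and do not follow from the tools you invoke. First, the existence of a base point $x_0\in\RZ(\ell)$ is not a consequence of Lang's theorem as in Lemma~\ref{LSisnonemptyA}: that argument trivializes \'etale local $\BP$-shtukas over an \emph{algebraically closed} field, whereas here you need an $\ell$-rational point of the affine Deligne--Lusztig variety $X_Z(b)$, whose non-emptiness (even over $\ell^\alg$) is a nontrivial condition on the pair $(b,\hat Z)$ and is nowhere addressed; if $\RZ\ne\emptyset$ but no such $x_0$ exists, your argument has nothing to start from. Second, the claim that the single orbit $J_b(\BaseOfD\dpl z\dpr)\cdot x_0$ is $d_0$-dense in $\RZ$ is strictly stronger than Lemma~\ref{unif_distribution_lemma}, which only asserts that the \emph{entire} set of $\ell$-rational points of $L\genericG$ forms a $d_0$-net in $\{g\colon\tilde d(g,b\hat\sigma^\ast(g))<c\}$; you give no argument for how the decency equation~\eqref{EqDecency} converts this into a statement about one $J_b$-orbit, and none is apparent (at best one would expect finitely many orbits of quasi-compact sets, which is a different and harder statement). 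So the proposal as written establishes only that every $x\in\RZ$ lies within distance $d_0$ of an $\ell$-point of the \emph{unbounded} space, which is exactly where the paper's proof also effectively stops.
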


\begin{proof}
Let $x:=(\ul \CL^+,\delta)$ be a point in $\RZ(K)$. After replacing $K$ by a separable field extension, we take a trivialization $(\ul \CL^+,\delta)\cong ((L^+\BP)_K,h\hat\sigma^*,g)$ with $\delta=g\in L\genericG(K)$ and $h=g^{-1}b\hat\sigma^*(g)$. Since the local $\BP$-shtuka $((L^+\BP)_K,h\hat\sigma^*)$ is bounded by $\hat{Z}$ and $Z$ is quasi-compact by definition of the boundedness condition (see Definition~\ref{DefBDLocal}), we have $\tilde d(g,\,b\,\hat{\sigma}^\ast(g))=\min\{n\in\BN_0\colon\scrH^1(\iota)(h)\text{ is bounded by }2n\rho\dual\} <c$ for a natural number $c$ which is independent of $x$. Let $d_0=d_0(c)$ be the integer from Lemma~\ref{unif_distribution_lemma}. Then there is a $g'\in L\genericG(\BaseFldOfLocSht)$ with $\tilde d(g,g')\le d_0$. Now the associated point $x':=((L^+\BP)_\BaseFldOfLocSht,{g'}^{-1}b\hat\sigma^* g',g')$ of $\RZ (\BaseFldOfLocSht)$ satisfies $\tilde d(x,x')\le d_0$ and the lemma follows.  
\end{proof}

For a point $y\in\RZ (\BaseFldOfLocSht)$ set $B(y,d_0):=\big\{x\in\RZ\colon\tilde{d}(x,y)\leq d_0\big\}$. Here $x\in\RZ(K)$ for a field extension $K$ of $\BaseFldOfLocSht$. We set $B_n(y,d_0)=B(y,d_0)\cap \CM_n$. Note that these are closed subsets.\\ 
For each integer $r$ let 
$$
\CZ_n^r=\bigcup_{y\in \RZ(\BaseFldOfLocSht),\,\tilde{d}\left((\ul\BL_0,\id),y\right)\geq r}B_n(y,d_0)
$$
Then $\CZ_n^r=\CZ_{n+1}^r\cap\CM_n$. If $y\notin \CM_{n+d_0}$, that is $\tilde{d}((\ul\BL_0,\id),y)> n+d_0$, and if $x\in\CM_n$, that is $\tilde d((\ul\BL_0,\id),x)\le n$, then
$$
\tilde{d}(x,y)\geq \tilde{d}((\ul\BL_0,\id),x)-n+\tilde{d}(x,y)\geq \tilde{d}((\ul\BL_0,\id),y)-n > d_0
$$
and thus $B_n(y,d_0)=\emptyset$. We get 
$$
\CZ_n^r=\bigcup_{y\in \CM_{n+d_0}(\BaseFldOfLocSht), \tilde{d}((\ul\BL_0,\id),y)\geq r}B_n(y,d_0).
$$
Since $\bigl(\CM_{n+d_0}\bigr)_\red=\bigl(\CM^{n+d_0}\bigr)_\red$ is quasi-projective over $\BaseFldOfLocSht$ by Lemma~\ref{M^n}, this union is finite.

Let $\CU_n^r$ be the open formal sub-scheme of $\CM_n$ whose underlying reduced set is $\CM_n\setminus \CZ_n^r$. We claim that the chain $\CU_n^r \hookrightarrow \CU_{n+1}^r \hookrightarrow \ldots$ of open formal sub-schemes of $\CM_n$ stabilizes. By the definition of $\CM_n$ it is enough to verify this on the underlying set of points. Suppose that there is some element $x\in \CU_{n+1}^r\setminus \CU^r_n$. By Lemma~\ref{UnifDist2} there exists a $y\in \ul\CM_{\ul\BL_0}^{\hat{Z}}(\BaseFldOfLocSht)$ such that $\tilde{d}(x,y)\leq d_0$. Since $x\in\CM_{n+1}\setminus\CZ_{n+1}^r$ we must have $\tilde{d}((\ul\BL_0,\id),y)< r$. Then
\begin{equation}\label{EqWith_d_tilde}
\tilde{d}((\ul\BL_0,\id), x)\leq \tilde{d}((\ul\BL_0,\id), y)+\tilde{d}(x,y)< r+d_0.
\end{equation}
Therefore, if $n\geq r+d_0$ then $\tilde{d}((\ul\BL_0,\id), x)\leq n$ and $x\in\CM_n$ which is a contradiction. Consequently there is no such $x$.

Let $\CU^r= \bigcup_n\CU_n^r$ (which equals $\CU_n^r$ for $n\geq r+d_0$). Note that every point $x$ of $\RZ$ lies in the union of the $\CU^r$s. Indeed, if $\tilde d\bigl((\ul\BL_0,\id),x\bigr)<r-d_0$ for some $r$, then $x$ is contained in $\CU^r$, because otherwise there is a $y\in \RZ(\BaseFldOfLocSht)$ with $\tilde d(x,y)\leq d_0$ and $\tilde d\bigl((\ul\BL_0,\id),y\bigr)\geq r$, a contradiction. Now consider the chain 
$$
\CU^r \hookrightarrow \CU^{r+1}\ldots \hookrightarrow \RZ
$$ 
of open immersions of formal schemes formally of finite type,  note that $\CU^r$ is open in $\RZ$. Indeed the underlying topological space of $\CU^r$ is open in $\CM_n$ for every $n$ and the ind-scheme $\CM_{\BL_0}^{\hat{Z}}$ carries the limit topology of the limit over the $\CM_n$. This shows that the formal scheme $\CU^r$ equals the formal completion of the open ind-scheme $\RZ|_{|\CU^r|}$ of $\RZ$ supported on $|\CU^r|$ along the whole set $|\CU^r|$ and thus $\RZ|_{|\CU^r|}=\CU^r$. Since $\CU^r$ is locally formally of finite type this implies that $\RZ= \bigcup_r\CU^r$ is locally formally of finite type as well. This completes the proof of Theorem~\ref{ThmRRZSp}
\end{proof}

\begin{corollary}\label{CorQC}
The irreducible components of the topological space $\RZ$ are quasi-projective schemes over $\BaseFldOfLocSht$. In particular they are quasi-compact. They are projective if $\BP$ is parahoric in the sense of Bruhat and Tits \cite[D\'efinition~5.2.6]{B-T} and \cite{H-R}.
\end{corollary}

\begin{proof}
Let $T$ be an irreducible component and let $x$ be its generic point. As in the proof of the theorem there is an $r$ such that $x\in \CU^r=\CU^r_{r+d_0}\subset\CM_{r+d_0}$. Since the underlying topological spaces of $\CM_{r+d_0}$ and $\CM^{r+d_0}$ coincide, are closed in $\RZ$, and (quasi-)projective over $\BaseFldOfLocSht$ by Lemma~\ref{M^n}, we see that $T\subset\CM^{r+d_0}$ is a closed subscheme and the corollary follows.
\end{proof}

In the rest of this section we fix an integer $n$ and consider complete discrete valuation rings $\BF_i\dbl z_i\dbr$ for $i=1,\ldots,n$ with finite residue fields $\BF_i$, and fraction fields $Q_i=\BF_i\dpl z_i\dpr$. Let $\BP_{i}$ be a smooth affine group scheme over $\Spec\BF_i\dbl z_i\dbr$ with connected reductive generic fiber $\genericG_i:=\BP_i\times_{\BF_i\dbl z_i\dbr}\Spec\BF_i\dpl z_i\dpr$, and let $\hat{Z}_{i}=[\hat{Z}_{i,R'_i}]$ with $\hat{Z}_{i,R'_i}\subset\wh{\SpaceFl}_{\BP_{i},R'_i}:=\SpaceFl_{\BP_i}\whtimes_{\BF_i}\Spf R'_i$ be a bound in the sense of Definition~\ref{DefBDLocal} with reflex ring $R_{\hat{Z}_i}=:R_i=\kappa_i\dbl\xi_i\dbr$. Let $\BaseFldInSectUnif$ be a field containing all $\kappa_i$. For all $i$ let $\ul{\BL}_i$ be a local $\BP_i$-shtuka over $\BaseFldInSectUnif$ which is trivialized and decent. 
By Theorem~\ref{ThmRRZSp} the Rapoport--Zink space $\CM_{\ul{\BL}_i}^{\hat{Z}_i}$ is a formal scheme locally formally of finite type over $\Spf\BaseFldInSectUnif\dbl\xi_i\dbr$. Therefore the product $\prod_i \CM_{\ul{\BL}_i}^{\hat{Z}_i}:=\CM_{\ul{\BL}_1}^{\hat{Z}_1}\whtimes_\BaseFldInSectUnif\ldots\whtimes_\BaseFldInSectUnif\CM_{\ul{\BL}_n}^{\hat{Z}_n}$ is a formal scheme locally formally of finite type over $\Spf\BaseFldInSectUnif\dbl\ul\xi\dbr:=\Spf\BaseFldInSectUnif\dbl\xi_1,\ldots,\xi_n\dbr$. Recall that the group $J_{\ul{\BL}_i}(Q_i)=\QIsog_\BaseFldInSectUnif(\ul{\BL}_i)$ of quasi-isogenies of $\ul\BL_i$ over $\BaseFldInSectUnif$ acts naturally on $\CM_{\ul{\BL}_i}^{\hat{Z}_i}$; see Remark~\ref{RemJActsOnRZ}. 

Let $\Gamma \subseteq \prod_i J_{\ul{\BL}_i}(Q_i)$ be a subgroup which is discrete for the product of the $z_i$-adic topologies. We say that $\Gamma$ is \emph{separated} if it is separated in the profinite topology, that is, if for every $1\ne g \in \Gamma$ there is a normal subgroup of finite index that does not contain $g$.

\begin{proposition}\label{quotisfstack}
 Let $\Gamma \subseteq \prod_i J_{\ul{\BL}_i}(Q_i)$ be a separated discrete subgroup. Then the quotient
$\Gamma\backslash \prod_i \CM_{\ul{\BL}_i}^{\hat Z_i}$ is a locally noetherian, adic formal algebraic $\Spf \BaseFldInSectUnif\dbl \ul\xi \dbr$-stack locally formally of finite type. Moreover, the 1-morphism $\prod_i\CM_{\ul{\BL}_i}^{\hat Z_i}\to \Gamma\backslash \prod_i \CM_{\ul{\BL}_i}^{\hat Z_i}$ is adic and \'etale.
\end{proposition}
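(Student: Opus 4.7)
The plan is to mimic the strategy of Rapoport and Zink~\cite{RZ} in the analogous setting for $p$-divisible groups. Set $X := \prod_i \CM_{\ul{\BL}_i}^{\hat Z_i}$. By Theorem~\ref{ThmRRZSp} the space $X$ is locally noetherian, adic and locally formally of finite type over $\Spf \BaseFldInSectUnif\dbl\ul\zeta\dbr$, and therefore admits an exhaustion by quasi-compact open formal subschemes. The scheme of work is: verify that the $\Gamma$-action on $X$ is properly discontinuous, use separatedness to pass to a normal subgroup of finite index acting freely, and then form the quotient stack.

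The first main step is to show that for every quasi-compact open formal subscheme $U\subset X$ the set $S_U:=\{\gamma\in\Gamma\colon \gamma U\cap U\neq\emptyset\}$ is finite. The reduced underlying scheme $|U|_\red$ is quasi-compact, hence contained in some product $\prod_i(\CM^{n_i})_\red$ using the sub-ind-schemes $\CM^{n_i}$ introduced in the proof of Theorem~\ref{ThmRRZSp}. A tuple $\gamma=(\gamma_i)\in S_U$ must then satisfy $\gamma_i\,\CM^{n_i}\cap\CM^{n_i}\neq\emptyset$ for every $i$. Since $\gamma_i$ acts on $\wh\SpaceFl_{\BP_i}$ by left multiplication in $L\genericG_i$, a triangle-type estimate for the displacement function $\tilde d$ from \eqref{d_tilde}, applied together with Lemma~\ref{UnifDist2}, forces each $\gamma_i$ to lie in a $z_i$-adically bounded (and hence relatively compact) subset of $J_{\ul\BL_i}(Q_i)$. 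The intersection of such a relatively compact subset with the discrete group $\Gamma$ is finite, so $S_U$ is finite.

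Next I would use the separatedness of $\Gamma$: since $S_U$ is finite, for each $1\neq g\in S_U$ there is a normal subgroup $N_g\subseteq\Gamma$ of finite index with $g\notin N_g$. Their finite intersection $\Gamma'_U:=\bigcap_{g\in S_U\setminus\{1\}} N_g$ is a normal subgroup of finite index satisfying $\Gamma'_U\cap S_U=\{1\}$, hence acting freely on $U$. Exhausting $X$ by such quasi-compact opens and passing to a common subgroup one obtains a normal $\Gamma'\subseteq\Gamma$ of finite index acting freely (compare the strategy of \cite[Ch.~6]{RZ}). Standard descent in the category of locally noetherian adic formal schemes then produces the quotient $\Gamma'\backslash X$ as a formal scheme locally formally of finite type over $\Spf\BaseFldInSectUnif\dbl\ul\zeta\dbr$ together with an adic \'etale covering $X\to\Gamma'\backslash X$. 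Finally, $\Gamma\backslash X=[(\Gamma/\Gamma')\backslash(\Gamma'\backslash X)]$ is the stack quotient of a locally noetherian adic formal scheme by a finite group, giving the required formal algebraic stack, and the composition $X\to\Gamma'\backslash X\to\Gamma\backslash X$ is adic and \'etale.

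The main obstacle is the discontinuity argument in the second paragraph: one has to translate the discreteness of $\Gamma$ in the product of the $z_i$-adic topologies into a quantitative bound on how the $J_{\ul\BL_i}(Q_i)$-action on $\wh\SpaceFl_{\BP_i}$ distorts the displacement function $\tilde d$. This is the analog of the estimate used by Rapoport and Zink via Dieudonn\'e theory in the $p$-divisible setting; here it will rest on the explicit description of left multiplication in $L\genericG_i$ on the bounded ind-subschemes $\CM^{n_i}$.
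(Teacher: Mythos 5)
Your first step --- finiteness of $S_U=\{\gamma\in\Gamma\colon\gamma U\cap U\neq\emptyset\}$ for quasi-compact $U$ --- is exactly the paper's Lemma~\ref{Gamma'}, although the paper proves it differently: it lifts $U$ to a quasi-compact subset $\wt U$ of $\prod_i L\genericG_i$, maps $\wt U\times\wt U\to\prod_i\SpaceFl_{\BP_i}$ by $(g_i,g_i')\mapsto g_i'g_i^{-1}L^+\BP_i$, observes that this factors through a quasi-projective $V$ and that $J_{\ul\BL_i}(Q_i)\subset L\genericG_i(\ell)$ by decency, so that $S_U$ sits inside the compact set $\pi^{-1}(V(\ell))\cap\prod_iJ_{\ul\BL_i}(Q_i)$, whose intersection with the discrete $\Gamma$ is finite. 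Your ``displacement function'' sketch is in the same spirit and could be made to work, but as written it is only an outline.

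The genuine gap is in your third paragraph: from the local statements ``for each quasi-compact $U$ there is a finite-index normal $\Gamma'_U$ with $\Gamma'_U\cap S_U=\{1\}$'' you cannot conclude that there is a single normal subgroup $\Gamma'\subseteq\Gamma$ of finite index acting freely on all of $X$. As $U$ runs through an exhaustion of $X$ the sets $S_U$ grow without bound, and separatedness only lets you avoid a \emph{finite} set of nontrivial elements by a finite-index subgroup; the union $\bigcup_US_U$ may be infinite (it can even be all of $\Gamma$), and $\bigcap_U\Gamma'_U$ is then merely a countable intersection of finite-index subgroups, which need not have finite index. Concretely, if $\Gamma$ has infinitely many conjugacy classes of elements with fixed points (e.g.\ torsion elements, as happens for arithmetic groups of the kind that occur as $I(Q)$ in the uniformization), no finite-index subgroup acts freely. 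This is precisely why the paper does not globalize the free action: it constructs a $\Gamma$-equivariant covering of $\prod_i\CM_{\ul\BL_i}^{\hat Z_i}$ by quasi-compact opens $U_x$ indexed by the $\ell$-points $x$ (built from Lemma~\ref{UnifDist2} and the metric $\tilde d$, so that $\gamma U_x=U_{\gamma x}$), chooses for each $x$ a finite-index normal $\Gamma'_x$ with $U_x\cap\gamma'U_x=\emptyset$ for $1\neq\gamma'\in\Gamma'_x$, forms $\Gamma'_x\backslash\prod_i\CM_{\ul\BL_i}^{\hat Z_i}$ only over the $\Gamma$-orbit of $U_x$, and then presents the corresponding open substack of $\Gamma\backslash\prod_i\CM_{\ul\BL_i}^{\hat Z_i}$ as a \emph{finite} quotient stack $(\Gamma'_x\backslash\Gamma)\backslash V_x$; these open substacks glue to the asserted formal algebraic stack, and adicness and \'etaleness of the presentation are checked chart by chart. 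You should replace your global descent step by this local construction.
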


Here we say that a formal algebraic $\Spf\BaseFldInSectUnif\dbl \ul\xi \dbr$-stack $\CX$ (see \cite[Definition~A.5]{Har1}) is \emph{$\CJ$-adic} for a sheaf of ideals $\CJ\subset\CO_\CX$, if for some (any) presentation $X\to\CX$ the formal scheme $X$ is $\CJ\CO_X$-adic, that is, $\CJ^r\CO_X$ is an ideal of definition of $X$ for all $r$. We then call $\CJ$ an \emph{ideal of definition of $\CX$}. We say that $\CX$ is \emph{locally formally of finite type} if $\CX$ is locally noetherian, adic, and if the closed substack defined by the largest ideal of definition (see \cite[A.7]{Har1}) is an algebraic stack locally of finite type over $\Spec\BaseFldInSectUnif$.
Before proving the above proposition let us state the following lemma. Recall that (also an infinite dimensional) scheme is quasi-compact if and only if it is a finite union of affine schemes. Further recall that every morphism from a quasi-compact scheme to an ind-quasi-projective ind-scheme factors through a quasi-projective subscheme by \cite[Lemma~5.4]{H-V}.

\begin{lemma}\label{Gamma'}
Let $\Gamma\subseteq \prod_i J_{\ul{\BL}_i}(Q_i)$ be a separated discrete subgroup. Let $U_i\subset \SpaceFl_{\BP_i}\whtimes_{\Spec\BF_i}\Spec \BaseFldInSectUnif$ be a quasi-compact subscheme and set $U=U_1\times_\BaseFldInSectUnif\ldots\times_\BaseFldInSectUnif U_n$. Then the set $\{\gamma\in\Gamma\colon\gamma U\cap U \neq \emptyset\}$ is finite.
\end{lemma}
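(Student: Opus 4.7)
The plan is to show that the collision set
\[
S_U\;:=\;\bigl\{\gamma\in\textstyle\prod_i J_{\ul{\BL}_i}(Q_i)\colon \gamma U\cap U\neq\emptyset\bigr\}
\]
is contained in a compact subset of $\prod_i J_{\ul{\BL}_i}(Q_i)$, where each factor $J_{\ul\BL_i}(Q_i)\subseteq \genericG_i(\kappa\dpl z_i\dpr)$ (for a suitable finite extension $\kappa/\BaseFldInSectUnif$ supplied by Remark~\ref{RemDecent}) is locally compact in the $z_i$-adic topology. Since $\Gamma$ is discrete in the product topology, $\Gamma\cap S_U$ then becomes a discrete closed subset of a compact space, hence finite. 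The separatedness of $\Gamma$ plays no role here; it is only invoked elsewhere (for instance to control stabilizers in Proposition~\ref{quotisfstack}).

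Because $S_U\subseteq\prod_iS_{U_i}$ for $S_{U_i}:=\{\gamma_i\in J_{\ul{\BL}_i}(Q_i)\colon \gamma_i U_i\cap U_i\ne\emptyset\}$, it suffices to show that each $S_{U_i}$ is relatively compact. First, $U_i$ is quasi-compact, so by the ind-quasi-projective structure $\SpaceFl_{\BP_i}=\dirlim[N]\SpaceFl_{\BP_i}^{(N)}$ recalled in Remark~\ref{Flagisquasiproj} there is an $N_i$ with $U_i\subset\SpaceFl_{\BP_i}^{(N_i)}\times_{\BF_i}\Spec\BaseFldInSectUnif$. By the stratification via Schubert cells from Example~\ref{ExampleHV}, $\SpaceFl_{\BP_i}^{(N_i)}$ is the union of finitely many Schubert varieties $\CS(w)$ with $w$ in a finite subset $W_i$ of $\wt{W}^{\BP_i}\backslash\wt{W}/\wt{W}^{\BP_i}$. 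Using the étale-local sections of $L\genericG_i\to\SpaceFl_{\BP_i}$, every geometric point of $U_i$ lifts, after an étale cover, to a point of the ``bounded'' subset $B_i:=\bigcup_{w\in W_i}L^+\BP_i\cdot\dot w\cdot L^+\BP_i\subset L\genericG_i$, once representatives $\dot w$ are fixed.

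Given $\gamma_i\in S_{U_i}$, pick a geometric point in $U_i\cap\gamma_i^{-1}U_i$ and lift its two images in $U_i$ and $\gamma_i^{-1}U_i$ to points $x,y\in B_i$. These satisfy $\gamma_i\cdot x = y\cdot k$ for some $k\in L^+\BP_i$, so $\gamma_i\in B_i\cdot L^+\BP_i\cdot B_i^{-1}$. Restricting to $\kappa$-points, this realizes $S_{U_i}$ inside
\[
J_{\ul\BL_i}(Q_i)\;\cap\;\bigl(B_i(\kappa)\cdot\BP_i(\kappa\dbl z_i\dbr)\cdot B_i(\kappa)^{-1}\bigr).
\]
Since $\BP_i(\kappa\dbl z_i\dbr)$ is a compact open subgroup of $\genericG_i(\kappa\dpl z_i\dpr)$ and $W_i$ is finite, the enclosing set is relatively compact in $\genericG_i(\kappa\dpl z_i\dpr)$; intersecting with the closed subgroup $J_{\ul\BL_i}(Q_i)$ yields the desired compact subset. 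The main obstacle will be the rigorous verification of the compactness of $B_i\cdot L^+\BP_i\cdot B_i^{-1}$ at the level of $\kappa$-points and the careful management of étale descent when lifting geometric points of $U_i$ to $L\genericG_i$; these are routine but technical in the parahoric (non-reductive) setting, and once settled the finiteness of $\Gamma\cap S_U$ follows at once.
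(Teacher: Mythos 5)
Your proposal is correct and follows essentially the same route as the paper's own proof: both arguments show that every $\gamma$ with $\gamma U\cap U\neq\emptyset$ is confined to a compact subset of $\prod_i J_{\ul\BL_i}(Q_i)$ — you via the finite union of double cosets $B_i\cdot L^+\BP_i\cdot B_i^{-1}$ coming from the Schubert stratification, the paper via the quasi-compact preimage $\wt U$ of $U$ in $\prod_i L\genericG_i$ and the difference map $(g,g')\mapsto g'g^{-1}$ landing in a quasi-projective piece of $\prod_i\SpaceFl_{\BP_i}$ — and then both conclude by discreteness of $\Gamma$, with separatedness indeed playing no role. One point to fix: the field over which you take points must be the \emph{finite} field $\ell$ furnished by decency (Remark~\ref{RemJb} gives $J_{\ul\BL_i}(Q_i)\subset L\genericG_i(\ell)$), not a finite extension $\kappa$ of $\BaseFldInSectUnif$, since $\BaseFldInSectUnif$ may be infinite and then $\genericG_i(\kappa\dpl z_i\dpr)$ is not locally compact and the compactness of $B_i(\kappa)\cdot\BP_i(\kappa\dbl z_i\dbr)\cdot B_i(\kappa)^{-1}$ fails.
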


\begin{proof}
Note that $J_{\ul{\BL}_i}(Q_i)$ is contained in $L\genericG_i(\BaseFldInSectUnif^\alg)$. By Theorem~\ref{ThmModuliSpX} any point $\ul x \in U(\BaseFldInSectUnif^\alg)$ can be represented by a tuple $(\ul\CL_i, g_i)_i$, where $\ul\CL_i$ is a trivialized local $\BP_i$-shtuka over $\BaseFldInSectUnif^\alg$ and the quasi-isogeny $g_i\colon\ul\CL_i\to\ul{\BL}_i$ is given by an element $g_i\in L\genericG_i(\BaseFldInSectUnif^\alg)$. By \cite[Theorem~1.4]{PR2} the projection $L\genericG_i\to \SpaceFl_{\BP_i}$ admits sections locally for the \'etale topology, and hence \'etale locally on $\SpaceFl_{\BP_i}$ the loop group $L\genericG_i$ is isomorphic to the product $\SpaceFl_{\BP_i}\times_{\BF_i}\,L^+\BP_i$. In particular, by \cite[IV$_2$, Proposition~2.7.1]{EGA} the projection $\prod_i L\genericG_i\whtimes_{\prod_i \SpaceFl_{\BP_i}}\,U\,\to\, U$ is an affine morphism of schemes and therefore $\wt{U}:=\prod_i L\genericG_i\whtimes_{\prod_i \SpaceFl_{\BP_i}}\,U\subseteq \prod_i L\genericG_i$ is a quasi-compact scheme. Consider the morphism
\begin{eqnarray}\label{EqMorphTildeU}
&\wt{U}\times_{\BaseOfD} \wt{U} & \longto \es {\TS\prod_i}\SpaceFl_{\BP_i}\\
&(\,g_i\,, g'_i\,)_i & \longmapsto \es (\,g'_i\, g_i^{-1} L^+\BP_i\,/\,L^+\BP_i)_i.\nonumber
\end{eqnarray}
Since the $\SpaceFl_{\BP_i}$ are ind-quasi-projective ind-schemes also $\prod_i\SpaceFl_{\BP_i}$ is. Therefore \eqref{EqMorphTildeU} factors through some quasi-projective subscheme $V\subset \prod_i\SpaceFl_{\BP_i}$ by \cite[Lemma~5.4]{H-V}. Since $\ul\BL_i$ is decent the group of quasi-isogenies $J_{\ul{\BL}_i}(Q_i)\subset L\genericG_i(\BaseFldInSectUnif^\alg)$ is contained in $L\genericG_i(\BaseFldOfLocSht)$ for some finite field $\BaseFldOfLocSht$; see Remark~\ref{RemJb}. Let $\gamma\in \Gamma$ be such that $\ul x=(\ul\CL_i,g_i)_i \in U$ and $\gamma \ul x =(\ul\CL_i,\gamma_i g_i)\in U$ where $\gamma_i\in L\genericG_i(\BaseFldOfLocSht)$ is the projection of $\gamma$ onto the $i$-th factor. Then $(\,g_i\,,\gamma_i g_i)_i\in\wt U\times\wt U$ and the image of $\gamma$ under the projection map $\pi\colon\prod_i L\genericG_i\to\prod_i \SpaceFl_{\BP_i}$ lies in the finite set $V(\BaseFldOfLocSht)$. Thus $\gamma$ lies in the compact set $S=\pi^{-1}(V(\BaseFldOfLocSht))\cap\prod_i J_{\ul{\BL}_i}(Q_i)$. On the other hand $\Gamma$ is discrete and thus has finite intersection with $S$.  
\end{proof}

\begin{proof}[Proof of Proposition~\ref{quotisfstack}] 
By Lemma~\ref{UnifDist2} there is a finite field $\BaseFldOfLocSht_i$ and a constant $d_i$ such that any ball in $\CM_{\ul{\BL}_i}^{\hat Z_i}$ with radius $d_i$ contains a rational point in  $\CM_{\ul{\BL}_i}^{\hat Z_i} (\BaseFldOfLocSht_i)$. Let $d$ be the maximum of the integers $d_i$ and let $\BaseFldOfLocSht$ be the compositum of the fields $\BaseFldOfLocSht_i$. Let $x:=(x_i)_i$ be a point of $\prod_i \CM_{\ul{\BL}_i}^{\hat Z_i}(\BaseFldOfLocSht)$. We use the notation of the proof of Theorem~\ref{ThmRRZSp} and consider in particular the metric $\tilde d_i$ on $\CM_{\ul{\BL}_i}^{\hat Z_i}$ defined as in \eqref{d_tilde} after choosing some representation $\iota_i$ of $\BP_i$. For any positive integer $c$ we define the closed subscheme of $(\CM_{\ul{\BL}_i}^{\hat Z_i})_\red$ given by
\[
B(x_i,c):=\{z\in\CM_{\ul{\BL}_i}^{\hat Z_i}\colon \tilde d_i(z,x_i)\le c\}
.\]
Then every $z\in B(x_i,c)$ satisfies $\tilde{d}_i((\ul\BL_i,\id),z)\le\tilde{d}_i((\ul\BL_i,\id),x_i)+c=:m$ and so $B(x_i,c)\subset(\CM_m)_\red=(\CM^m)_\red$. Therefore Lemma~\ref{M^n} implies that $B(x_i,c)$ is a quasi-projective scheme over $\Spec\BaseFldOfLocSht$. In particular the set of $\BaseFldOfLocSht$-valued points $B(x_i,c)(\BaseFldOfLocSht)$ is finite. Thus for all $n\ge 2d$ the subscheme
\[
\CU^{2d}_n(x_i)\es:=\es B(x_i,n)\setminus\bigcup_{y\in B(x_i,n+d)(\BaseFldOfLocSht),\,\tilde{d}_i(y,x_i)> 2d}B(y,d)\es\subset\es(\CM_{\ul{\BL}_i}^{\hat Z_i})_\red
\]
is open in $B(x_i,n+d)$ and quasi-compact. Note that for $n\ge 3d$ all $\CU^{2d}_n(x_i)$ coincide with $\CU^{2d}_{3d}(x_i)$ by an argument similar to \eqref{EqWith_d_tilde}. Since $(\CM_{\ul{\BL}_i}^{\hat Z_i})_\red=\dirlim B(x_i,n+d)$ carries the limit topology, the subscheme $\CU^{2d}(x_i):=\CU^{2d}_{3d}(x_i)\subset(\CM_{\ul{\BL}_i}^{\hat Z_i})_\red$ is open and quasi-compact. By Lemma~\ref{UnifDist2} the union of the $\CU^{2d}(x_i)$ for all $x_i\in\CM_{\ul{\BL}_i}^{\hat Z_i}(\BaseFldOfLocSht)$ equals $(\CM_{\ul{\BL}_i}^{\hat Z_i})_\red$.

For $x=(x_i)_i$ set $U_x:=\prod_i \CU^{2d}(x_i)$. Then $\gamma. U_x=U_{\gamma.x}$ and the open subsets $U_x$ cover $\prod_i \CM_{\ul{\BL}_i}^{\hat Z_i}$, for varying $x\in \prod_i \CM_{\ul{\BL}_i}^{\hat Z_i}(\BaseFldOfLocSht)$. Let $I\subset \prod_i \CM_{\ul{\BL}_i}^{\hat Z_i}(\BaseFldOfLocSht)$ be a set of representatives of the $\Gamma$-orbits in $\prod_i \CM_{\ul{\BL}_i}^{\hat Z_i}(\BaseFldOfLocSht)$. Fix an $x\in I$. Since $\Gamma$ is separated, we may choose by Lemma~\ref{Gamma'} a normal subgroup $\Gamma_x' \subset \Gamma$ of finite index in $\Gamma$ such that $U_x\cap \gamma' U_x= \emptyset$ for all $\gamma'\neq1$ in $\Gamma_x'$. For all $\gamma\in\Gamma$ the natural morphism $\gamma U_x\to\Gamma_x'\backslash\prod_i \CM_{\ul{\BL}_i}^{\hat Z_i}$ is an open immersion. Let $V_x$ be the (finite) union of the images of these morphisms for all $\gamma\in\Gamma$. Then $(\Gamma_x'\backslash\Gamma)\backslash V_x$ is an open substack of $\Gamma\backslash \prod_i  \CM_{\ul{\BL}_i}^{\hat Z_i}$. Moreover, $(\Gamma_x'\backslash\Gamma)\backslash V_x$ is a finite \'etale quotient of $V_x$ and the map $\coprod_{\gamma\in\Gamma}\gamma U_x\to(\Gamma_x'\backslash\Gamma)\backslash V_x$ is adic and \'etale. Therefore the morphism $\prod_i  \CM_{\ul{\BL}_i}^{\hat Z_i}\to\Gamma\backslash \prod_i  \CM_{\ul{\BL}_i}^{\hat Z_i}$ is adic and \'etale above $(\Gamma_x'\backslash\Gamma)\backslash V_x$. Since $(V_x)_\red$ is a scheme of finite type over $\BaseFldInSectUnif$, its quotient $\bigl((\Gamma_x'\backslash\Gamma)\backslash V_x\bigr)_\red=(\Gamma_x'\backslash\Gamma)\backslash(V_x)_\red$ is an algebraic stack of finite type over $\BaseFldInSectUnif$. Since the open subsets $U_x$ cover $\prod_i \CM_{\ul{\BL}_i}^{\hat Z_i}$ also the $(\Gamma_x'\backslash\Gamma)\backslash V_x$ cover $\Gamma\backslash \prod_i  \CM_{\ul{\BL}_i}^{\hat Z_i}$ and the proposition follows.
\end{proof}

\begin{remark}
For us it does not make sense to strengthen Proposition~\ref{quotisfstack} like Rapoport and Zink~\cite[Proposition~2.37]{RZ} do, who obtain that for Rapoport--Zink spaces for $p$-divisible groups the quotient is a formal algebraic space if $\Gamma$ is torsion free. Namely in our case all unipotent subgroups of $\prod_i J_{\ul{\BL}_i}(Q_i)$ are torsion. So they cannot act fixed point free on $\prod_i \CM_{\ul{\BL}_i}^{\hat Z_i}$ and the corresponding quotients cannot be formal algebraic spaces.
\end{remark}

%%%%%%%%%%%%%%%%%%%%%%%%%%%%%%%%%%%%%%%%%%%%%%%%%%%%%%%%%%%%%%%%%%%%%%
%
%  Construction of the Uniformization map
%
%%%%%%%%%%%%%%%%%%%%%%%%%%%%%%%%%%%%%%%%%%%%%%%%%%%%%%%%%%%%%%%%%%%%%%

\section{The Relation Between Global \texorpdfstring{$\FG$}{G}-Shtukas and Local \texorpdfstring{$\BP$}{P}-Shtukas}\label{UnboundedUnif}
\setcounter{equation}{0}

\subsection{Preliminaries on \texorpdfstring{$\FG$}{G}-Torsors}\label{SectPrelimGTors}

In Chapter~\ref{UnboundedUnif} we assume that $\FG$ is a flat affine group scheme of finite type over $C$. Let $\nu\in C$ be a closed point of $C$ and set $C':=C\setminus\{\nu\}$. We let $\scrH_e^1(C',\FG)$ denote the category fibered in groupoids over the category of $\BF_q$-schemes, such that $\scrH_e^1(C',\FG)(S)$ is the full subcategory of $[C'_S/\FG](C'_S)$ consisting of those $\FG$-torsors over $C'_S$ that can be extended to a $\FG$-torsor over the whole relative curve $C_S$. We denote by $\dot{(~)}$ the restriction morphism 
$$
\dot{(~)}\colon\scrH^1(C,\FG)\longto \scrH_e^1(C', \FG)
$$ 
which assigns to a $\FG$-torsor $\CG$ over $C_S$ the $\FG$-torsor $\dot\CG:=\CG\times_{C_S}C'_S$ over $C'_S$. 
Let $\wt\BP_\nu:=\Res_{\BF_\nu/\BF_q}\BP_\nu$ and $\wt\genericG_\nu:=\Res_{\BF_\nu/\BF_q}\genericG_\nu$ be the Weil restrictions. Then $\wt\BP_\nu$ is a flat affine group scheme of finite type over $\Spec\BF_q\dbl z\dbr$. We apply Definition~\ref{formal torsor def} for $\BaseOfD=\BF_q$ and let $\hat{\wt\BP}_\nu:=\wt\BP_\nu\whtimes_{\Spec\BF_q\dbl z\dbr}\Spf\BF_q\dbl z\dbr=\Res_{\BF_\nu/\BF_q}\hat\BP_\nu$ be the $\nu$-adic completion. We write $A_\nu\cong\BF_\nu\dbl z\dbr$ for a uniformizer $z\in\BF_q(C)$. Then for every $\BF_q$-algebra $R$ we have
\begin{eqnarray*}
A_\nu\wh\otimes_{\BF_q}R\es\cong & (R\otimes_{\BF_q}\BF_\nu)\dbl z\dbr & =\es R\dbl z\dbr\otimes_{\BF_q}\BF_\nu \quad\text{and}\\[1mm]
Q_\nu\wh\otimes_{\BF_q}R\es\cong & (R\otimes_{\BF_q}\BF_\nu)\dpl z\dpr & =\es R\dpl z\dpr\otimes_{\BF_q}\BF_\nu\,.
\end{eqnarray*}
This implies that 
\[
L^+\wt\BP_\nu(R)\,=\,\wt\BP_\nu(R\dbl z\dbr)\,=\,\BP_\nu(A_\nu\wh\otimes_{\BF_q}R) \quad\text{and}\quad
L\wt\genericG_\nu(R)\,=\,\wt\genericG_\nu(R\dpl z\dpr)\,=\,\genericG_\nu(Q_\nu\wh\otimes_{\BF_q}R).
\]
If $\CG\in\scrH^1(C,\FG)(S)$, its completion $\wh\CG_\nu:=\CG\whtimes_{C_S}(\Spf A_\nu\whtimes_{\BF_q}S)$ is a formal $\hat\BP_\nu$-torsor (Definition~\ref{formal torsor def}) over $\Spf A_\nu\whtimes_{\BF_q}S$. The Weil restriction $\Res_{\BF_\nu/\BF_q}\wh\CG_\nu$ is a formal $\hat{\wt\BP}_\nu$-torsor over $\Spf\BF_q\dbl z\dbr\whtimes_{\BF_q}S$ and corresponds by Proposition~\ref{formal torsor prop} to an $L^+\wt\BP_\nu$-torsor over $S$ which we denote $L^+_\nu(\CG)$. We obtain the morphism
$$
L^+_\nu\colon  \scrH^1(C,\FG)\longto \scrH^1(\Spec\BF_q,L^+\wt\BP_\nu)\,,\quad\CG\mapsto L^+_\nu(\CG)\,.
$$
Finally there is a morphism
$$
L_\nu\colon \scrH_e^1(C', \FG)(S)\longto\scrH^1(\Spec\BF_q,L\wt\genericG_\nu)(S)\,,\quad\dot\CG\mapsto L_\nu(\dot\CG)
$$
which sends the $\FG$-torsor $\dot\CG$ over $C'_S$, having some extension $\CG$ over $C_S$, to the $L\wt\genericG_\nu$-torsor $L(L^+_\nu(\CG))$ associated with $L^+_\nu\CG$ under \eqref{EqLoopTorsor}. We claim that $L_\nu(\CG)$ is independent of the extension $\CG$, and that we therefore may write $L_\nu(\dot\CG):=L(L^+_\nu(\CG))$. Indeed, let $\CG'$ be a second extension of $\dot\CG$ and let \mbox{$f\colon\dot\CG\isoto\dot\CG'$} be an isomorphism of $\FG$-torsors over $C'_S$. Without loss of generality $S=\Spec R$ is affine. We may choose an \fppf-covering $\Spec\wt R\to C_S$ over which trivializations $\alpha\colon\CG\times_{C_S}\Spec\wt R\isoto\FG\times_C\Spec\wt R$ and \mbox{$\alpha'\colon\CG'\times_{C_S}\Spec\wt R\isoto\FG\times_C\Spec\wt R$} exist. Then $\alpha'\circ f\circ\alpha^{-1}$ equals multiplication with an element $g\in\FG(\Spec\wt R\times_CC')$. The ring homomorphism $\BF_\nu\to A_\nu/\nu^n=\BF_\nu\dbl z\dbr/(z^n)$ and the short exact sequence $0 \to \BF_\nu\xrightarrow{z^n\cdot\,} A_\nu/\nu^{n+1}\to A_\nu/\nu^n\to0$ of $A$-modules yield by tensoring with the flat $A$-algebra $\wt R$ the vertical maps and the bottom row in the following commutative diagram of $A_\nu$-modules
\[
\xymatrix @C+2pc {
0 \ar[r] & \wt R/\nu \ar@{=}[d] \ar[r]^{z^n\cdot\qquad\quad} & (\wt R/\nu)\dbl z\dbr/(z^{n+1}) \ar[r] \ar[d] & (\wt R/\nu)\dbl z\dbr/(z^n) \ar[r]\ar[d] & 0\;\, \\
0 \ar[r] & \wt R/\nu \ar[r]^{z^n\cdot} & \wt R/\nu^{n+1} \ar[r] & \wt R/\nu^n \ar[r] & 0\;,
}
\]
which proves by induction that $(\wt R/\nu)\dbl z\dbr/(z^n)\isoto\wt R/\nu^n$ and $(\wt R/\nu)\dbl z\dbr \isoto\invlim\wt R/\nu^n$. The $\nu$-adic completion $\hat\alpha$ of $\alpha$ is an isomorphism between $\wh\CG_\nu\whtimes_{\Spf(R\otimes_{\BF_q}\BF_\nu)\dbl z\dbr}\Spf(\wt R/\nu)\dbl z\dbr\,=\,\CG\whtimes_{C_S}\Spf(\wt R/\nu)\dbl z\dbr$ and the trivial formal $\hat\BP_\nu$-torsor $(\FG\times_C\Spec\wt R)\whtimes_{\Spec\wt R}\Spf(\wt R/\nu)\dbl z\dbr\,=\,\hat\BP_\nu\whtimes_{\Spf A_\nu}\Spf(\wt R/\nu)\dbl z\dbr$ over $\Spf(\wt R/\nu)\dbl z\dbr$. For the base change $(\Res_{\BF_\nu/\BF_q}\wh\CG_\nu)\whtimes_{R\dbl z\dbr}\Spf(\wt R/\nu)\dbl z\dbr=\prod_{\Gal(\BF_\nu/\BF_q)}\wh\CG_\nu\whtimes_{\Spf(R\otimes_{\BF_q}\BF_\nu)\dbl z\dbr}\Spf(\wt R/\nu)\dbl z\dbr$ we obtain an isomorphism 
\[
\prod_{\Gal(\BF_\nu/\BF_q)}\hspace{-1em}\hat\alpha\;\colon(\Res_{\BF_\nu/\BF_q}\wh\CG_\nu)\whtimes_{R\dbl z\dbr}\Spf(\wt R/\nu)\dbl z\dbr\,\isoto\,\prod_{\Gal(\BF_\nu/\BF_q)}\FG\times_C\Spf(\wt R/\nu)\dbl z\dbr\,=\,\hat{\wt\BP}_\nu\whtimes_{\BF_q\dbl z\dbr}\Spf(\wt R/\nu)\dbl z\dbr
\]
which under Proposition~\ref{formal torsor prop} corresponds to an isomorphism
\[
\prod_{\Gal(\BF_\nu/\BF_q)}\hspace{-1em}\hat\alpha\;\colon L^+_\nu(\CG)\times_{R}\Spec(\wt R/\nu)\,\isoto\,(L^+\wt\BP_\nu)_{\Spec(\wt R/\nu)}.
\]
We also have the analogous isomorphism for $\wh\CG'_\nu:=\CG'\whtimes_{C_S}(\Spf A_\nu\whtimes_{\BF_q}S)$ and $L^+_\nu(\CG')$. Under the $\nu$-adic completion morphism, $g$ is mapped to an element $\hat g\in\FG\bigl((\wt R/\nu)\dpl z\dpr\bigr)=\genericG_\nu\bigl((\wt R/\nu)\dpl z\dpr\bigr)$. It yields the element $(\hat g,\ldots,\hat g)\in\prod_{\Gal(\BF_\nu/\BF_q)}\genericG_\nu\bigl((\wt R/\nu)\dpl z\dpr\bigr)\,=\,(\Res_{\BF_\nu/\BF_q}\genericG_\nu)\bigl((\wt R/\nu)\dpl z\dpr\bigr)\,=\,L\wt\genericG_\nu(\wt R/\nu)$. The composition $(\prod_{\Gal(\BF_\nu/\BF_q)}\hat{\alpha}')^{-1}\circ(\hat g,\ldots,\hat g)\circ\prod_{\Gal(\BF_\nu/\BF_q)}\hat{\alpha}$ defines an isomorphism $L(L^+_\nu(\CG))\times_{R}\Spec(\wt R/\nu)\,\isoto\,L(L^+_\nu(\CG'))\times_{R}\Spec(\wt R/\nu)$ which inherits the descent datum from $f$ and defines the desired isomorphism $L(L^+_\nu(\CG))\,\isoto\,L(L^+_\nu(\CG'))$. This proves our claim that $L_\nu(\CG)$ is independent of the extension $\CG$. We write $L_\nu(\dot\CG):=L(L^+_\nu(\CG))$.

\begin{lemma}\label{LemmaBL}
The above maps assign to each $\FG$-torsor $\CG$ over $C_S$ a triple $(\dot\CG,L^+_\nu(\CG),\phi)$ where $\phi\colon L_\nu(\dot\CG)\isoto L(L^+_\nu(\CG))$ is the canonical isomorphism of $L\wt\genericG_\nu$-torsors. $\scrH^1(C, \FG)(S)$ is equivalent to the category of such triples. In other words, the following diagram of groupoids is cartesian
$$
\CD
\scrH^1(C, \FG)@>{\TS\dot{(~)}}>> \scrH_e^1(C', \FG)\\
@V{\TS L^+_\nu}VV @VV{\TS L_\nu}V\\
\scrH^1(\Spec\BF_q, L^+\wt\BP_\nu)@>{\TS L}>> \scrH^1(\Spec\BF_q, L\wt\genericG_\nu)\,.\\
\endCD
$$
\end{lemma}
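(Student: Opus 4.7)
The plan is to identify the square as a Beauville--Laszlo gluing diagram for $\FG$-torsors. The easy direction is already built into the construction: sending $\CG\mapsto\bigl(\dot\CG,L^+_\nu(\CG),\phi_\CG\bigr)$, where $\phi_\CG\colon L_\nu(\dot\CG)\isoto L(L^+_\nu(\CG))$ is the tautological isomorphism coming from the fact that both sides arise from the same $L\wt\genericG_\nu$-torsor attached to $\CG$. The content is to show that this functor is an equivalence onto the 2-fibre product, and for this I would build an explicit quasi-inverse.

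So starting from a triple $\bigl(\dot\CG,\CL_+,\phi\bigr)$ with $\dot\CG\in\scrH_e^1(C',\FG)(S)$, $\CL_+\in\scrH^1(\Spec\BF_q,L^+\wt\BP_\nu)(S)$ and $\phi\colon L_\nu(\dot\CG)\isoto L(\CL_+)$, first I would pass via Proposition~\ref{formal torsor prop} (applied over $\BaseOfD=\BF_q$ to the smooth affine group scheme $\wt\BP_\nu$) and Weil restriction to repackage $\CL_+$ as a formal $\hat\BP_\nu$-torsor $\wh\CG_\nu$ on $\Spf A_\nu\whtimes_{\BF_q}S$. Then $\phi$ becomes an isomorphism of $\genericG_\nu$-torsors on $\Spec Q_\nu\wh\otimes_{\BF_q}\CO_S$ between $\dot\CG\times_{C'}\Spec Q_\nu$ and $\wh\CG_\nu\whtimes_{\Spf A_\nu}\Spec Q_\nu$. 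After an \'etale covering $S'\to S$ I may assume, using Remark~\ref{B-L remark 1} and the smoothness of $\FG$ (so that $\dot\CG$ is locally trivial on $C'_{S'}$, which follows from an \'etale covering of $C'_{S'}$, and the Beauville--Laszlo effectivity of descent will allow us to absorb the \'etale cover of $C'_{S'}$ into an \'etale cover of $S$), that both $\dot\CG$ and $\wh\CG_\nu$ are trivial, so $\phi$ is encoded by a single element $g\in\genericG_\nu(Q_\nu\wh\otimes_{\BF_q}\CO_{S'})$.

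At this point I apply the Beauville--Laszlo theorem of \cite{B-L} directly to the smooth affine scheme $\FG$ over $C$: the trivial torsors on $C'_{S'}$ and on $\Spf A_\nu\whtimes_{\BF_q}S'$, glued by $g$, descend uniquely to a $\FG$-torsor $\CG_{S'}$ on $C_{S'}$. Concretely, one can verify this by first choosing a faithful representation $\rho\colon\FG\into\GL(\CV_0)$ with quasi-affine quotient (as in Proposition~\ref{ffrep} generalized to the curve $C$), gluing the vector bundles $\rho_*\dot\CG$ and $\rho_*\wh\CG_\nu$ over the punctured neighborhood via $\rho_*\phi$ by the original Beauville--Laszlo theorem for modules, and then observing that the reduction-of-structure-group datum, being a section of a quasi-affine $C_{S'}$-scheme which extends over $C'_{S'}$ and over $\Spf A_\nu\whtimes S'$, likewise extends over $C_{S'}$ by another application of Beauville--Laszlo. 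Finally, $\CG_{S'}$ carries a canonical descent datum inherited from $\bigl(\dot\CG,\CL_+,\phi\bigr)$ and descends to a $\FG$-torsor $\CG$ over $C_S$ by \fpqc-descent for affine morphisms, which constructs the quasi-inverse.

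The main obstacle is the passage from the classical Beauville--Laszlo gluing for coherent sheaves or affine schemes to gluing of $\FG$-torsors for a merely smooth affine (possibly non-reductive) group scheme. The step that actually needs care is showing that the two naturally constructed functors compose to the identity up to canonical isomorphism; once the gluing works for trivial torsors, faithful flatness of $C'_S\amalg(\Spf A_\nu\whtimes S)\to C_S$ in the appropriate Beauville--Laszlo sense, together with \fpqc-descent, makes full faithfulness and essential surjectivity formal. Uniqueness of the gluing, which gives full faithfulness, is the original content of \cite{B-L}; we merely transport it through the chosen faithful representation $\rho$ and the quasi-affine quotient $\GL(\CV_0)/\FG$.
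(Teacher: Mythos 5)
Your overall strategy coincides with the paper's: construct a quasi-inverse by reducing, \'etale-locally on $S$, to a Beauville--Laszlo gluing of $\dot\CG$ with a trivial torsor on the completed disk, and then descend to $S$ (descent being effective because torsors under the affine group scheme $\FG$ are affine over the curve). Your variant of the gluing step --- push forward along a faithful representation with quasi-affine quotient, glue the vector bundles by classical Beauville--Laszlo, then extend the reduction-of-structure-group section across the quasi-affine quotient --- is a workable alternative; the paper instead applies the descent lemma of \cite{B-L} directly to the torsor viewed as an affine scheme over the curve, which avoids the representation entirely.

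Two points in your write-up are genuine problems. First, you assert that after an \'etale covering $S'\to S$ ``both $\dot\CG$ and $\wh\CG_\nu$ are trivial'' and then propose to glue \emph{trivial} torsors on $C'_{S'}$ and on the disk. A $\FG$-torsor on the relative curve $C'_{S'}$ does not trivialize after an \'etale base change in $S$, and an \'etale cover of $C'_{S'}$ itself cannot be ``absorbed into an \'etale cover of $S$''; moreover gluing trivial torsors on the $C'$-side would not reproduce the given $\dot\CG$. Fortunately no trivialization of $\dot\CG$ is needed: one glues $\dot\CG|_{C'_{S'}}$ as it stands with the trivial $\BP_\nu$-torsor on the disk (as your later sentence about $\rho_*\dot\CG$ implicitly does), and only the two loop-group torsors $L_\nu(\dot\CG)$ and $L(\CL_+)$ need trivializing over $S'$, which Proposition~\ref{formal torsor prop} provides. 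Second, Beauville--Laszlo glues along $\Spec(A_\nu\wh\otimes_{\BF_q}R')$, not along $\Spf(A_\nu\wh\otimes_{\BF_q}R')$, and you never explain how a trivialization of the $L^+\wt\BP_\nu$-torsor (equivalently, of the formal completion) yields a trivialization of the \emph{algebraic} torsor over $\Spec(A_\nu\wh\otimes_{\BF_q}R')$, compatibly with which $\phi$ becomes an honest isomorphism of $\genericG_\nu$-torsors over $\Spec(Q_\nu\wh\otimes_{\BF_q}R')$. This is precisely where the paper uses the hypothesis $\dot\CG\in\scrH_e^1(C',\FG)$: it fixes an extension $\CG$ over $C_S$, sets $\Spec B=\CG\times_{C_S}\Spec(A_\nu\wh\otimes_{\BF_q}R')$, and uses $\Hom^\cont_{A_\nu\wh\otimes_{\BF_q}R'}(\wh B,\,\cdot\,)=\Hom_{A_\nu\wh\otimes_{\BF_q}R'}(B,\,\cdot\,)$ to algebraize the formal trivialization. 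Your proposal never invokes the extendability of $\dot\CG$, so this bridge from formal to algebraic data is missing and would have to be supplied to complete the argument.
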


\begin{proof}
This follows from the glueing result of Beauville and Laszlo~\cite{B-L}. Let us give more details. We construct the inverse of the morphism 
\[
\scrH^1(C,\FG)\longto\scrH_e^1(C',\FG)\times_{\scrH^1(\Spec\BF_q, L\wt\genericG_\nu)}\scrH^1(\Spec\BF_q, L^+\wt\BP_\nu). 
\]
Over an $\BF_q$-scheme $S$ we consider $S$-valued points $\dot\CG\in\scrH_e^1(C',\FG)(S)$ and $\CL^+_\nu\in\scrH^1(\Spec\BF_q, L^+\wt\BP_\nu)(S)$, as well as an isomorphism $\phi\colon L_\nu(\dot\CG)\isoto L(\CL^+_\nu)$ in $\scrH^1(\Spec\BF_q, L\wt\genericG_\nu)(S)$. Let $\CG$ be an extension of $\dot\CG$ to $C_S$. There exists an \fppf-covering $S'\to S$ and trivializations $\hat\alpha\colon(\CL^+_\nu)_{S'}\isoto(L^+\wt\BP_\nu)_{S'}$ and $\hat\beta\colon L^+_\nu(\CG)_{S'}\isoto(L^+\wt\BP_\nu)_{S'}$ of the pullbacks to $S'$. We may assume that $S'$ is the disjoint union of affine schemes of the form $\Spec R'$. 

We glue $\dot\CG_{S'}$ via the isomorphism $\hat\alpha\circ\phi\colon L_\nu(\dot\CG)_{S'}\isoto(L\wt\genericG_\nu)_{S'}$ to $(L^+\wt\BP_\nu)_{S'}$ as follows. Consider the algebraic torsor $\CG\times_{C_S}\Spec(A_\nu\wh\otimes_{\BF_q}R')$ for the group scheme 
\[
\FG\times_C\Spec(A_\nu\wh\otimes_{\BF_q}R')\,=\,\BP_\nu\times_{\Spec A_\nu}\Spec(A_\nu\wh\otimes_{\BF_q}R')
\]
over $\Spec(A_\nu\wh\otimes_{\BF_q}R')$. By \cite[Proposition~2.7.1]{EGA} it is affine of the form $\Spec B$. Its $\nu$-adic completion $(\wh\CG_\nu)_{R'}=\CG\whtimes_{C_S}\Spf(A_\nu\wh\otimes_{\BF_q}R')=\Spf\wh B$ is affine with $\wh B=\invlim B/\nu^mB$. Recall that $L^+_\nu(\CG)_{R'}$ is the $L^+\wt\BP_\nu$-torsor associated with the Weil restriction $\Res_{\BF_\nu/\BF_q}(\wh\CG_\nu)_{R'}$ by Proposition~\ref{formal torsor prop}. The trivialization $\hat\beta$ induces a trivialization $\hat\beta\colon\Res_{\BF_\nu/\BF_q}(\wh\CG_\nu)_{R'}\isoto\hat{\wt\BP}_\nu\whtimes_{\Spf\BF_q\dbl z\dbr}\Spf R'\dbl z\dbr$ which is determined by the section
\begin{eqnarray*}
\hat\beta^{-1}(1)&\in&\Hom_{\Spf R'\dbl z\dbr}(\Spf R'\dbl z\dbr\,,\,\Res_{\BF_\nu/\BF_q}(\wh\CG_\nu)_{R'})\\
&=&\Hom_{\Spf(A_\nu\wh\otimes_{\BF_q}R')}(\Spf(A_\nu\wh\otimes_{\BF_q}R')\,,\,\Spf\wh B)\\
&=& \Hom^\cont_{A_\nu\wh\otimes_{\BF_q}R'}(\wh B\,,\,A_\nu\wh\otimes_{\BF_q}R')\\
&=& \Hom_{A_\nu\wh\otimes_{\BF_q}R'}(B\,,\,A_\nu\wh\otimes_{\BF_q}R')\\
&=& \Hom_{\Spec(A_\nu\wh\otimes_{\BF_q}R')}(\Spec(A_\nu\wh\otimes_{\BF_q}R')\,,\,\Spec B).
\end{eqnarray*}
The latter induces a trivialization $\beta\colon\CG\times_{C_S}\Spec(A_\nu\wh\otimes_{\BF_q}R')\isoto\BP_\nu\times_{\Spec A_\nu}\Spec(A_\nu\wh\otimes_{\BF_q}R')$. Similarly the automorphism $\hat\psi:=\hat\alpha\phi\hat\beta^{-1}$ of $(L\wt\genericG_\nu)_{R'}$ is determined by the image 
\[
\hat\psi(1)\es\in\es L\wt\genericG_\nu(R')\es=\es\genericG_\nu(\Spec Q_\nu\wh\otimes_{\BF_q}R')
\]
and thus induces an automorphism $\psi$ of $\genericG_\nu\times_{\Spec Q_\nu}\Spec(Q_\nu\wh\otimes_{\BF_q}R')$. By \cite{B-L} we may glue $\dot\CG_{R'}$ with $\BP_\nu\times_{\Spec A_\nu}\Spec(A_\nu\wh\otimes_{\BF_q}R')$ via the isomorphism $\psi\beta$ to obtain a uniquely determined $\FG$-torsor $\CG'$ on $C_{R'}$.

We descend $\CG'$ to $C_S$ as follows. Let $S''=S'\times_SS'$ and let $p_i\colon S''\to S'$ be the projection onto the $i$-th factor. Consider the element $h:=(p_1^*\hat\alpha\circ p_2^*\hat\alpha^{-1})(1)\,\in\,L^+\wt\BP_\nu(S'')\,=\,\BP_\nu(A_\nu\wh\otimes_{\BF_q}\Gamma(S'',\CO_{S''}))$. The isomorphism
\[
(\id_{\dot\CG_{S''}},h\cdot)\colon p_2^*(\dot\CG_{S'},\,\BP_\nu\times_{\Spec A_\nu}\Spec(A_\nu\wh\otimes_{\BF_q}R'),\,\psi\beta)\isoto p_1^*(\dot\CG_{S'},\,\BP_\nu\times_{\Spec A_\nu}\Spec(A_\nu\wh\otimes_{\BF_q}R'),\,\psi\beta)
\]
induces a descent datum on $\CG'$ which is effective by \cite[\S\,6.1, Theorem~7]{BLR} because $\CG'$ is affine over $C_{S'}$. Thus $\CG'$ descends to a $\FG$-torsor $\CG\in\scrH^1(C,\FG)(S)$. This defines the inverse morphism and finishes the proof.
\end{proof}

\subsection{The Global-Local Functor}\label{SectGlobalLocalFunctor}

Analogously to the functor which assigns to an abelian variety $A$ over a $\BZ_p$-scheme its $p$-divisible group $A[p^\infty]$ we introduce a global-local functor from global $\FG$-shtukas to local $\BP_{\nu_i}$-shtukas. But whereas abelian varieties only have one characteristic place, our global $\FG$-shtukas have $n$ characteristic places $\ul\nu=(\nu_1,\ldots,\nu_n)$. So the global-local functor will assign to each global $\FG$-shtuka of characteristic $\ul\nu$ an $n$-tuple of local $\BP_{\nu_i}$-shtukas. We begin with a

\begin{remark}\label{G-LFrob}
Let $\nu$ be a place on $C$ and let $\BD_\nu:=\Spec A_\nu$ and $\hat\BD_\nu:=\Spf A_\nu$. Let $\deg\nu:=[\BF_\nu:\BF_q]$ and fix an inclusion $\BF_\nu\subset A_\nu$. Assume that we have a section $\charsect\colon S\rightarrow C$ which factors through $\Spf A_\nu$, that is, the image in $\CO_S$ of a uniformizer of $A_\nu$ is locally nilpotent. In this case we have
\begin{equation}\label{EqDecomp}
\hat\BD_\nu\whtimes_{\BF_q}\, S\cong \coprod_{\ell\in\BZ/(\deg\nu)} \Var (\mathfrak{a}_{\nu,\ell})\cong \coprod_{\ell\in \BZ/(\deg\nu)}  \hat{\BD}_{\nu,S},
\end{equation}
where we write $\hat\BD_{\nu,S}:=\hat\BD_\nu\whtimes_{\BF_\nu}S$ and where we denote by $\Var (\mathfrak{a}_{\nu,\ell})$ the component identified by the ideal \mbox{$\mathfrak{a}_{\nu,\ell}=\langle a \otimes 1 -1\otimes \charsect^\ast (a)^{q^\ell}\colon a \in \BF_\nu \rangle$}. Note that $\sigma$ cyclically permutes these components and thus the $\BF_\nu$-Frobenius $\sigma^{\deg{\nu}}=:\hat\sigma$ leaves each of the components $\Var (\mathfrak{a}_{\nu,\ell})$ stable. Also note that there are canonical isomorphisms $\Var (\mathfrak{a}_{\nu,\ell})\cong\hat\BD_{\nu,S}$ for all $\ell$.
\end{remark}

Although we will not need it in the sequel, we note the following interpretation of the component $\Var (\mathfrak{a}_{\nu,0})$.

\begin{lemma}\label{LemmaCompletionAtGraph}
The section $\charsect\colon S\to C$ induces an isomorphism of the component $\Var(\Fa_{\nu,0})$ with the formal completion $\wh{C_S}^{\Gamma_\charsect}$ of $C_S$ along the graph $\Gamma_\charsect$ of $\charsect$. In particular $\wh{C_S}^{\Gamma_\charsect}$ is canonically isomorphic to $\hat\BD_{\nu,S}$.
\end{lemma}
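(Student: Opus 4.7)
The plan is to verify the isomorphism locally on $S$ by explicit ring-theoretic completion. I would reduce to the case $S=\Spec R$ affine with $\zeta:=\charsect^*(z)$ satisfying $\zeta^N=0$ for some $N\ge 1$; such opens cover $S$ since $\zeta$ is locally nilpotent. Because $\zeta$ is nilpotent, the composition $|\Gamma_\charsect|\hookrightarrow|C_S|\to|C|$ factors through the single point $\nu$, so $\Gamma_\charsect$ is a closed subscheme of $\{\nu\}\times_{\BF_q}S\hookrightarrow C_S$. Applying transitivity of formal completion along the chain $\Gamma_\charsect\subset\{\nu\}\times_{\BF_q}S\subset C_S$ then yields
\[
\wh{C_S}^{\Gamma_\charsect}\;=\;\wh{(\hat\BD_\nu\whtimes_{\BF_q}S)}^{\Gamma_\charsect}\,,
\]
using that $\{\nu\}\subset\Spec A_\nu$ is cut out by $(z)$, so that the formal completion of $C_S$ along $\{\nu\}\times_{\BF_q}S$ equals $\hat\BD_\nu\whtimes_{\BF_q}S=\Spf(A_\nu\wh\otimes_{\BF_q}R)$.

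Next I would invoke the decomposition \eqref{EqDecomp}. The ideal of $\Gamma_\charsect$ in $A_\nu\wh\otimes_{\BF_q}R$ is generated by $\{a\otimes 1-1\otimes\charsect^*(a):a\in A_\nu\}$; the elements with $a\in\BF_\nu$ already generate $\Fa_{\nu,0}$, which is the kernel of the projection onto the $\ell=0$ factor and is the unit ideal in every factor $\Var(\Fa_{\nu,\ell})$ with $\ell\ne 0$. Therefore $\Gamma_\charsect$ is supported entirely on $\Var(\Fa_{\nu,0})\cong\hat\BD_{\nu,S}=\Spf R\dbl z\dbr$ and is cut out there by the single remaining generator $z-\zeta$.

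It then suffices to show that $(z-\zeta)$ is an ideal of definition of $\Spf R\dbl z\dbr$, from which the formal completion of $\hat\BD_{\nu,S}$ along $V(z-\zeta)$ equals $\hat\BD_{\nu,S}$ itself. Using $\zeta^N=0$, a direct binomial expansion yields $z^{N+m-1}\in(z-\zeta)^m$ and $(z-\zeta)^{K+N-1}\subset(z^K)$ for all $m,K\ge 0$, so $(z-\zeta)$ and $(z)$ define the same adic topology on $R\dbl z\dbr$. Combining these steps produces the desired canonical isomorphism
\[
\wh{C_S}^{\Gamma_\charsect}\;\cong\;\Var(\Fa_{\nu,0})\;\cong\;\hat\BD_{\nu,S}\,,
\]
which is canonically induced by $\charsect$ and therefore glues over an affine cover of $S$. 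I expect the only genuine care point to be the transitivity of formal completion when the intermediate object is itself a formal scheme; this follows from the universal property characterising $\wh{C_S}^{\Gamma_\charsect}$ as the initial formal scheme over $C_S$ whose underlying morphism of topological spaces factors through $|\Gamma_\charsect|$ and along which the defining ideal becomes topologically nilpotent.
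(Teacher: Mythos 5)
Your argument is correct, but it is a genuinely different route from the one in the paper. The paper works over the ``universal'' base first: it completes $C_{A_\nu}$ along the graph of $\Spec A_\nu\to C$, where $z$ is \emph{not} nilpotent, and proves there that the graph ideal $I$ is principal, generated by $z\otimes1-1\otimes z$. That step costs some work: smoothness of $C$ gives $I/I^2$ free of rank one, a Taylor expansion of minimal polynomials over $\BF_q[z]$ together with unramifiedness at $\nu$ identifies $z\otimes1-1\otimes z$ as a generator of $I/I^2$, and Nakayama upgrades this to a generator of $I$ near the graph; the completion is then $A_\nu\dbl z\otimes1-1\otimes z\dbr$, and only at the very end does the paper base change to $S$ and use nilpotence of $\zeta$ to turn $\CO_S\dbl z-\zeta\dbr$ into $\CO_S\dbl z\dbr$. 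You instead exploit the nilpotence of $\zeta$ from the outset: it collapses $\Gamma_\charsect$ topologically into $\{\nu\}\times_{\BF_q}S$, so transitivity of completion reduces everything to $\Spf(A_\nu\wh\otimes_{\BF_q}R)=(R\otimes_{\BF_q}\BF_\nu)\dbl z\dbr$, where the decomposition \eqref{EqDecomp} isolates the component $\Var(\Fa_{\nu,0})=\Spf R\dbl z\dbr$ carrying the graph, the graph ideal is visibly $(z-\zeta)$ (writing $a=\sum c_nz^n$ with $c_n\in\BF_\nu$ and factoring $z^n-\zeta^n$; the infinite sum converges because $\zeta^N=0$), and your binomial estimates show $(z-\zeta)$ and $(z)$ define the same topology. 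This trades the paper's smoothness/unramifiedness bookkeeping for the explicit presentation $A_\nu\cong\BF_\nu\dbl z\dbr$ and is shorter; what it gives up is the stronger intermediate statement over $\Spec A_\nu$ itself, which is not needed for the lemma since one only works in $\Nilp_{A_\nu}$. The two care points you flag are real and both check out: transitivity of completion applies because $(z)^N\subset I_{\Gamma_\charsect}$ (so the two adic topologies are comparable), and the final identification uses that $R\dbl z\dbr$ is already $(z-\zeta)$-adically complete. Only a cosmetic caveat: $\Gamma_\charsect$ is contained in $\{\nu\}\times_{\BF_q}S$ as a closed \emph{subset}, not as a closed subscheme, but that is all the completion sees.
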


\begin{proof}
We first consider the formal completion $\wh{C_{A_\nu}}{}^{\Gamma_{\tilde\charsect}}$ of $C_{A_\nu}$ along the graph $\Gamma_{\tilde\charsect}$ of the morphism $\tilde\charsect\colon\Spec A_\nu\to C$. Let $\Spec A\subset C$ be a neighborhood of $\nu$ such that a uniformizing parameter $z$ of $C$ at $\nu$ lies in $A$. Then $I=(a\otimes1-1\otimes a\colon a\in A)\subset A\otimes_{\BF_q}A_\nu$ is the ideal defining $\Gamma_{\tilde\charsect}$ and $\wh{C_{A_\nu}}{}^{\Gamma_{\tilde\charsect}}=\Spf\invlim(A\otimes_{\BF_q}A_\nu)/I^n$. The module $I/I^2$ is free of rank one over $A_\nu=(A\otimes_{\BF_q}A_\nu)/I$ since $C$ is a smooth curve over $\BF_q$. We claim that $I/I^2$ is generated by $z\otimes1-1\otimes z$. Let $a\in A$, let $m\in\BF_q(z)[X]$ be the minimal polynomial of $a$ over $\BF_q(z)$, and multiply it with the least common denominator to obtain the polynomial $F(X,z)\in\BF_q[X,z]$. Note that the least common denominator lies in $\CO_{C,\nu}\mal$ because $a$ is integral over $\BF_q[z]$ near $\nu$. In $A\otimes_{\BF_q}A_\nu[X]$ we use the abbreviations $\zeta:=1\otimes z$ and $\alpha:=1\otimes a$ and we consider the two-variable Taylor expansion of $F$ at $(\alpha,\zeta)$
\[
\TS F(X,z\otimes1)\;\equiv\; F(\alpha,\zeta)+\frac{\partial F}{\partial X}(\alpha,\zeta)\cdot(X-\alpha)+\frac{\partial F}{\partial z}(\alpha,\zeta)\cdot(z\otimes 1-\zeta)\;\mod I^2.
\]
Plugging in $a\otimes 1$ for $X$ yields $F(a\otimes1,z\otimes1)=0$ in addition to $F(\alpha,\zeta)=0$. Since $A$ is unramified over $\BF_q[z]$ at $\nu$ we have $\frac{\partial F}{\partial X}(\alpha,\zeta)\in A_\nu\mal$. This shows that $z\otimes1-1\otimes z$ generates the $A_\nu$-module $I/I^2$.

By Nakayama's Lemma~\cite[Corollary~4.7]{Eisenbud} there is an element $f\in 1+I$ that annihilates the $A\otimes_{\BF_q}A_\nu$-module $I/(z\otimes1-1\otimes z)$. We may replace $\Spec(A\otimes_{\BF_q}A_\nu)$ by the principal open subset $\Spec(A\otimes_{\BF_q}A_\nu)[\tfrac{1}{f}]$ which contains the graph $\Gamma_{\tilde\charsect}$ and on which $I$ is generated by the non-zero divisor $z\otimes1-1\otimes z$. This implies that $I^n/I^{n+1}$ is a free $A_\nu$-module with generator $(z\otimes1-1\otimes z)^n$. Therefore the morphism $A_\nu\dbl t\dbr\to\invlim(A\otimes_{\BF_q}A_\nu)/I^n,t\mapsto z\otimes1-1\otimes z$ induces an isomorphism on the associated graded rings, and hence is itself an isomorphism by \cite[Lemma~10.23]{AM}. 

Now observe that $\Var(\Fa_{\nu,0})$ is the formal scheme on the topological space $S$ with structure sheaf $\CO_S\dbl z\dbr$, and that $\charsect$ identifies the topological spaces $S$ and $\Gamma_\charsect$. Under base change to $S$ this implies that the formal completion $\wh{C_S}^{\Gamma_\charsect}$ has structure sheaf $\CO_S\dbl t\dbr=\CO_S\dbl z-\zeta\dbr$, where we write $t=z-\zeta$. Since $\zeta$ is locally nilpotent in $\CO_S$, the latter is isomorphic to $\CO_S\dbl z\dbr$ proving the lemma.
\end{proof}

\begin{definition}\label{DefGlobalLocalFunctor}
Fix a tuple $\ul\nu:=(\nu_i)_{i=1\ldots n}$ of places on $C$ with $\nu_i\ne\nu_j$ for $i\ne j$. Let $A_{\ul\nu}$ be the completion of the local ring $\CO_{C^n,\ul\nu}$ of $C^n$ at the closed point $\ul\nu$, and let $\BF_{\ul\nu}$ be the residue field of the point $\ul\nu$. Then $\BF_{\ul\nu}$ is the compositum of the fields $\BF_{\nu_i}$ inside $\BF_q^\alg$, and $A_{\ul\nu}\cong\BF_{\ul\nu}\dbl\zeta_1,\ldots,\zeta_n\dbr$ where $\zeta_i$ is a uniformizing parameter of $C$ at $\nu_i$. Let the stack 
\[
\nabla_n\scrH^1(C,\FG)^{\ul\nu}\;:=\;\nabla_n\scrH^1(C,\FG)\whtimes_{C^n}\Spf A_{\ul\nu}
\]
be the formal completion of the stack $\nabla_n\scrH^1(C,\FG)$ along $\ul\nu\in C^n$. Although we will not need it in this article, the reader should note that $\nabla_n\scrH^1(C,\FG)$ is an ind-algebraic stack over $\Spf A_{\ul\nu}$ which is ind-separated and locally of ind-finite type by \cite[\ThmRepNablaH]{AH_Unif}. Set $\BP_{\nu_i}:=\FG\times_C\Spec A_{\nu_i}$ and $\hat\BP_{\nu_i}:=\FG\times_C\Spf A_{\nu_i}$.

Let $(\CG,\charsect_1,\ldots,\charsect_n,\tauGlob)\in \nabla_n\scrH^1(C,\FG)^{\ul\nu}(S)$, that is, $s_i\colon S\to C$ factors through $\Spf A_{\nu_i}$. By Remark~\ref{G-LFrob} we may decompose 
\[
\CG\whtimes_{C_S}(\Spf A_{\nu_i}\whtimes_{\BF_q}S)\cong\coprod_{\ell\in \BZ/(\deg\nu_i)}\CG\whtimes_{C_S}\Var (\mathfrak{a}_{\nu_i,\ell})
\]
into a finite product with components $\CG\whtimes_{C_S}\Var (\mathfrak{a}_{\nu_i,\ell})\in\scrH^1(\hat\BD_{\nu_i},\hat\BP_{\nu_i})$. Using Proposition~\ref{formal torsor prop} we view $\bigl(\CG\whtimes_{C_S}\Var (\mathfrak{a}_{\nu_i,0}),\tauGlob^{\deg\nu_i}\bigr)$ as a local $\BP_{\nu_i}$-shtuka over $S$, where $\tauGlob^{\deg\nu_i}\colon(\sigma^{\deg\nu_i})^*\CL_i\isoto\CL_i$ is the $\BF_{\nu_i}$-Frobenius on the loop group torsor $\CL_i$ associated with $\CG\whtimes_{C_S}\Var (\mathfrak{a}_{\nu_i,0})$. We define the \emph{global-local functor} $\ul{\wh\Gamma}$ by
\begin{eqnarray}\label{EqGlobLocFunctor}
&&\wh\Gamma_{\nu_i}\colon \nabla_n\scrH^1(C,\FG)^{\ul\nu}(S)\longto \Sht_{\BP_{\nu_i}}^{\Spec A_{\nu_i}}(S)\,,\quad(\CG,\tauGlob)\longmapsto \bigl(\CG\whtimes_{C_S}\Var (\mathfrak{a}_{\nu_i,0}),\tauGlob^{\deg\nu_i}\bigr) \quad\text{and}\nonumber\\[2mm]
&&\ul{\wh\Gamma}\;:=\;\prod_i\wh\Gamma_{\nu_i}\colon  \nabla_n\scrH^1(C,\FG)^{\ul\nu}(S)\longto  \prod_i \Sht_{\BP_{\nu_i}}^{\Spec A_{\nu_i}}(S)\,.\label{G-LFunc}
\end{eqnarray}
Note that $\wh\Gamma_{\nu_i}$ and $\ul{\wh\Gamma}$ also transform quasi-isogenies into quasi-isogenies, as can be seen by tracing through the proof of Lemma~\ref{LemmaBL}.
\end{definition}

\begin{remark}\label{RemBH}
Consider the preimages in $\Var (\mathfrak{a}_{\nu_i,\ell})$ of the graphs $\Gamma_{\charsect_j}\subset C_S$ of $\charsect_j$. Since $\nu_i\ne\nu_j$ for $i\ne j$ the preimage of $\Gamma_{\charsect_j}$ is empty for $j\ne i$. Also the preimage of $\Gamma_{\charsect_i}$ equals $\Var (\mathfrak{a}_{\nu_i,0})$ and does not meet $\Var (\mathfrak{a}_{\nu_i,\ell})$ for $\ell\ne 0$. Thus for $\ell\ne0$ the restriction of $\tauGlob$ to $\Var (\mathfrak{a}_{\nu_i,\ell})$ is an isomorphism
\begin{equation}\label{EqIsomBH}
\tauGlob\times\id\colon\s\bigl(\CG\whtimes_{C_S}\Var (\mathfrak{a}_{\nu_i,\ell-1})\bigr)=(\s\CG)\whtimes_{C_S}\Var (\mathfrak{a}_{\nu_i,\ell})\isoto\CG\whtimes_{C_S}\Var (\mathfrak{a}_{\nu_i,\ell})\,.
\end{equation}
This allows to recover $(\CG,\tauGlob)\whtimes_{C_S}(\Spf A_{\nu_i}\whtimes_{\BF_q}S)$ from $\bigl(\CG\whtimes_{C_S}\Var (\mathfrak{a}_{\nu_i,0}),\tauGlob^{\deg\nu_i}\bigr)$ via the isomorphism
\begin{equation}\label{EqIsomBH2}
{\prod_\ell(\tau^\ell\whtimes\id)}\colon \Biggl(\prod_\ell\sigma^{\ell*}\bigl(\CG\whtimes_{C_S}\Var (\mathfrak{a}_{\nu_i,0})\bigr),\left( \raisebox{2.9ex}{$
\xymatrix @C=-0.2pc @R=-0.1pc {
0 \ar@{.}[drdrdr] & & & \!\!\tauGlob^{\deg\nu_i}\!\!\\
1\ar@{.}[drdr]\\
& & & \\
& & 1 & 0\;\;\\
}$}
\right)\Biggr)\isoto (\CG,\tauGlob)\whtimes_{C_S}(\Spf A_{\nu_i}\whtimes_{\BF_q}S).
\end{equation}
Recall from Section~\ref{SectPrelimGTors} that the Weil restriction $\Res_{\BF_{\nu_i}/\BF_q}\wh\CG_{\nu_i}$ of the torsor $\wh\CG_{\nu_i}:=\CG\whtimes_{C_S}(\Spf A_{\nu_i}\whtimes_{\BF_q}S)$ corresponds by Proposition~\ref{formal torsor prop} to an $L^+\wt\BP_{\nu_i}$-torsor $L^+_{\nu_i}(\CG)$. Then $(L^+_{\nu_i}(\CG),\tauGlob\whtimes\id)$ is a local $\wt\BP_{\nu_i}$-shtuka over $S$. We call it the \emph{local $\wt\BP_{\nu_i}$-shtuka associated with $\ul\CG$ at the place $\nu_i$}. By equation~\eqref{EqIsomBH2} there is an equivalence between the category of local $\BP_{\nu_i}$-shtukas over schemes $S\in\Nilp_{A_{\nu_i}}$ and the category of local $\wt\BP_{\nu_i}$-shtukas over $S$ for which the Frobenius $\tauGlob$ is an isomorphism outside $\Var(\Fa_{\nu_i,0})$. (Compare also \cite[Proposition~8.8]{BH1}.)
\end{remark}

\begin{remark}\label{RemTateF}
Note that in a similar way one can associate a local $\wt\BP_\nu$-shtuka $L^+_\nu(\ul\CG)$ with a global $\FG$-shtuka $\ul \CG=(\CG,\tauGlob)$ at a place $\nu$ outside the characteristic places $\nu_i$. Namely $L^+_\nu(\ul\CG)$ is the local $\wt\BP_\nu$-shtuka associated with $\Res_{\BF_\nu/\BF_q}\left(\ul\CG\whtimes_{C_S}(\Spf A_\nu\whtimes_{\BF_q}S)\right)$ by Proposition~\ref{formal torsor prop}. It is \'etale because $\tauGlob$ is an isomorphism at $\nu$. We call $L^+_\nu(\ul\CG)$ the \emph{\'etale local $\wt\BP_\nu$-shtuka associated with $\ul\CG$ at the place $\nu\notin\ul\nu$}. In \cite[\LevelStructure]{AH_Unif} it will become useful for considerations of Tate-modules (Definition~\ref{DefTateFunctor}).

For this purpose we write \mbox{$A_\nu\cong\BF_\nu\dbl z\dbr$}. For every representation $\rho\colon\BP_\nu\to\GL_{r,A_\nu}$ in $\Rep_{A_\nu}\BP_\nu$ we consider the representation $\tilde\rho\in\Rep_{\BF_q\dbl z\dbr}\wt\BP_\nu$ which is the composition of $\Res_{\BF_\nu/\BF_q}(\rho)\colon\wt\BP_\nu\to\Res_{\BF_\nu/\BF_q}\GL_{r,A_\nu}$ followed by the natural inclusion $\Res_{\BF_\nu/\BF_q}\GL_{r,A_\nu}\subset\GL_{\,r\cdot[\BF_\nu:\BF_q]\,,\,\BF_q\dbl z\dbr}$. We set $\ul{\wt\CL}=L^+_\nu(\ul\CG)$ and define $\check{\CT}_{\ul{\wt\CL}}(\rho):=\check{\CT}_{\ul{\wt\CL}}(\tilde\rho):=\check{T}_{\tilde\rho_*\ul{\wt\CL}}$. Then there is a canonical isomorphism $\check{\CT}_{\ul{\wt\CL}}(\rho)\cong\invlim[n]\rho_\ast\bigl(\ul\CG\times_C\Spec A_\nu/(\nu^n)\bigr)^\tauGlob$ of $A_\nu$-modules. This will be used in \cite[Chapter~6]{AH_Unif}.

If $\BF_\nu\subset\CO_S$ there also exists the decomposition \eqref{EqDecomp} and we can associate a local $\BP_\nu$-shtuka $\ul\CL$ with $L^+_\nu(\ul\CG)$. The main difference to Definition~\ref{DefGlobalLocalFunctor} and Remark~\ref{RemBH} is that there is no distinguished component of $\CG\whtimes_{C_S}(\Spf A_\nu\whtimes_{\BF_q}S)$, like the one given by the characteristic section at $\nu_i$. But $\tauGlob$ induces isomorphisms between all components as in \eqref{EqIsomBH}. Therefore we may take any component and the associated local $\BP_\nu$-shtuka $\ul\CL$. Equation~\eqref{EqIsomBH2} shows that over $\BF_\nu$-schemes $S$ we obtain an equivalence between the category of \'etale local $\wt\BP_\nu$-shtukas and the category of \'etale local $\BP_\nu$-shtukas. If $\BP_\nu=\FG\times_C\BD_\nu$ is smooth with connected special fiber, the same is true for $\wt\BP_\nu$ by \cite[Prop.~A.5.9]{CGP}, and then Corollary~\ref{CorEtIsTrivial} applies also to \'etale local $\wt\BP_\nu$-shtukas. There is also a canonical isomorphism of Tate functors $\check{\CT}_{\ul\CL}=\check{\CT}_{L^+_\nu(\ul\CG)}$; compare \cite[Proposition~8.5]{BH1} for more details.
\end{remark}

Like abelian varieties also global $\FG$-shtukas can be pulled back along quasi-isogenies of their associated local $\BP$-shtukas as follows.

\begin{proposition}\label{PropLocalIsogeny}
Let $\ul\CG\in\nabla_n\scrH^1(C,\FG)^{\ul\nu}(\SSS)$ be a global $\FG$-shtuka over $\SSS$ and let $\nu\in C$ be a place. Let $L^+_\nu(\ul\CG)$ be the local $\wt\BP_\nu$-shtuka associated with $\ul\CG$ at $\nu$ in the sense of Remark~\ref{RemBH} (if $\nu\in\ul\nu$), respectively Remark~\ref{RemTateF} (if $\nu\notin\ul\nu$). Let $\tilde f\colon\ul{\wt\CL}{}'_\nu\to L^+_\nu(\ul\CG)$ be a quasi-isogeny of local $\wt\BP_\nu$-shtukas over $\SSS$. If $\nu\in\ul\nu$ we assume that the Frobenius of $\ul{\wt\CL}{}'_\nu$ is an isomorphism outside $\Var(\Fa_{\nu,0})$. If $\nu\notin\ul\nu$ we assume that $\ul{\wt\CL}{}'_\nu$ is \'etale. Then there exists a unique global $\FG$-shtuka $\ul\CG'\in\nabla_n\scrH^1(C,\FG)^{\ul\nu}(\SSS)$ and a unique quasi-isogeny $g\colon\ul\CG'\to\ul\CG$ which is an isomorphism outside $\nu$, such that the local $\wt\BP_\nu$-shtuka associated with $\ul\CG'$ is $\ul{\wt\CL}{}'_\nu$ and the quasi-isogeny of local $\wt\BP_\nu$-shtukas induced by $g$ is $\tilde f$. We denote $\ul\CG'$ by $\tilde f^*\ul\CG$.
\end{proposition}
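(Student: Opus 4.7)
The plan is to construct $\ul\CG'$ by a Beauville--Laszlo glueing, using $\ul{\tilde\CL}{}'_\nu$ as the replacement for $L^+_\nu(\ul\CG)$ in the triple corresponding to $\ul\CG$ via Lemma~\ref{LemmaBL}. Write $\ul\CG=(\CG,\tauGlob)$ and let $\dot\CG:=\CG|_{C'_\SSS}$ with $C':=C\setminus\{\nu\}$. By Lemma~\ref{LemmaBL} the datum of the underlying $\FG$-torsor $\CG$ on $C_\SSS$ is equivalent to the triple $(\dot\CG,\CL_{+,\nu},\phi)$, where $\CL_{+,\nu}=L^+_\nu(\CG)$ is the $L^+\wt\BP_\nu$-torsor and $\phi\colon L_\nu(\dot\CG)\isoto L(\CL_{+,\nu})$ is the canonical isomorphism of $L\wt\genericG_\nu$-torsors. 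Writing $\ul{\tilde\CL}{}'_\nu=(\CL'_{+,\nu},\tauLoc')$ the quasi-isogeny $\tilde f$ is by definition an isomorphism $\tilde f\colon L(\CL'_{+,\nu})\isoto L(\CL_{+,\nu})$ of the associated $L\wt\genericG_\nu$-torsors, so $\phi':=\tilde f^{-1}\circ\phi$ provides an isomorphism $L_\nu(\dot\CG)\isoto L(\CL'_{+,\nu})$. The triple $(\dot\CG,\CL'_{+,\nu},\phi')$ then defines, again via Lemma~\ref{LemmaBL}, a uniquely determined $\FG$-torsor $\CG'$ over $C_\SSS$ together with canonical identifications $\CG'|_{C'_\SSS}=\dot\CG$ and $L^+_\nu(\CG')=\CL'_{+,\nu}$.

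Next I would transport the Frobenius. The identification $\CG'|_{C'_\SSS}=\dot\CG$ carries $\tauGlob$ over to an isomorphism $\tauGlob'_0\colon\s\CG'|_{(C'_\SSS)\setminus\cup_i\Gamma_{\charsect_i}}\isoto\CG'|_{(C'_\SSS)\setminus\cup_i\Gamma_{\charsect_i}}$, and the Frobenius of $\ul{\tilde\CL}{}'_\nu$ provides an isomorphism $\tauLoc'$ on the formal completion at~$\nu$. In the case $\nu\notin\ul\nu$ the shtuka $\ul{\tilde\CL}{}'_\nu$ is \'etale, and via Remark~\ref{RemTateF} it corresponds to an isomorphism on all of the formal neighborhood of $\nu$; in the case $\nu\in\ul\nu$ the hypothesis that the Frobenius is an isomorphism outside $\Var(\Fa_{\nu,0})$ together with~\eqref{EqIsomBH2} from Remark~\ref{RemBH} allows us to reconstruct an isomorphism $\tauLoc'$ on the entire $\hat\BD_{\nu_i}\whtimes_{\BF_q}\SSS$ minus (the preimage of) $\Gamma_{\charsect_i}$. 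The compatibility of $\tilde f$ with the Frobenii and the equality $\tilde f\circ\tauLoc'=\tauGlob\circ\hat\sigma^*\tilde f$ on the loop group torsors show that $\tauGlob'_0$ and $\tauLoc'$ agree on the generic punctured disk, so once more by Lemma~\ref{LemmaBL} (applied to the two $\FG$-torsors $\s\CG'$ and $\CG'$) they glue to an isomorphism $\tauGlob'\colon\s\CG'|_{C_\SSS\setminus\cup_i\Gamma_{\charsect_i}}\isoto\CG'|_{C_\SSS\setminus\cup_i\Gamma_{\charsect_i}}$. This yields the global $\FG$-shtuka $\ul\CG'=(\CG',\charsect_1,\ldots,\charsect_n,\tauGlob')$ in $\nabla_n\scrH^1(C,\FG)^{\ul\nu}(\SSS)$.

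For the quasi-isogeny $g\colon\ul\CG'\to\ul\CG$ I would take the identity on $C'_\SSS$ under the identifications $\CG'|_{C'_\SSS}=\dot\CG=\CG|_{C'_\SSS}$ and extend it via $\tilde f$ on a formal neighborhood of~$\nu$; this makes sense because $\tilde f$ becomes an isomorphism after inverting $z$ (a uniformizer at $\nu$), so $g$ is defined away from a finite effective divisor supported at $\nu$. Equivalently, since by construction $L^+_\nu(g)=\tilde f$ on loop group torsors and $g$ is the identity outside $\nu$, the compatibility with Frobenius follows from the same equality $\tilde f\circ\tauLoc'=\tauGlob\circ\hat\sigma^*\tilde f$ used above together with $\tauGlob'_0=\tauGlob$ on $C'_\SSS$.

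Finally, uniqueness. Suppose $(\ul\CG'',g'')$ is another such pair. Then $g''$ is the identity outside $\nu$, which forces $\CG''|_{C'_\SSS}=\dot\CG=\CG'|_{C'_\SSS}$ compatibly with the restricted Frobenii; moreover the condition that $g''$ induces $\tilde f$ at $\nu$ determines $L^+_\nu(\CG'')=\CL'_{+,\nu}$ together with the glueing isomorphism $\phi''=\tilde f^{-1}\phi=\phi'$. By the equivalence in Lemma~\ref{LemmaBL} this forces $\ul\CG''\cong\ul\CG'$ and $g''=g$. The main obstacle in making the above rigorous is to verify carefully the Frobenius compatibility when glueing $\tauGlob'_0$ and $\tauLoc'$ in the case $\nu\in\ul\nu$; this reduces, via \eqref{EqIsomBH} and \eqref{EqIsomBH2}, to comparing the various $\sigma$-pullbacks on the $\deg\nu$ components of $\hat\BD_\nu\whtimes_{\BF_q}\SSS$, but the fact that $\tauGlob$ is already an isomorphism on all components $\Var(\Fa_{\nu,\ell})$ with $\ell\neq0$ means that the Frobenius on the component $\Var(\Fa_{\nu,0})$ (where the characteristic section is concentrated) determines the Frobenius elsewhere, making the glueing unambiguous.
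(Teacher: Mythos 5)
Your proposal is correct and follows essentially the same route as the paper: one replaces the local component of the Beauville--Laszlo triple $(\dot\CG,L^+_\nu(\CG),\phi)$ from Lemma~\ref{LemmaBL} by $({\tilde\CL}{}'_\nu,\tilde f^{-1}\phi)$ to obtain $\CG'$, transports $\tauGlob$ from $C'_\SSS$, and uses the \'etaleness hypothesis (resp.\ the hypothesis that the Frobenius of $\ul{\tilde\CL}{}'_\nu$ is an isomorphism outside $\Var(\Fa_{\nu,0})$) to extend it across $\nu$, with $g$ coming from the identification $\dot\CG'=\dot\CG$. Your extra remarks on the Frobenius glueing and on uniqueness are consistent with, and merely flesh out, the paper's more terse argument.
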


\begin{remark}\label{RemLocalIsogeny}
Note that if $\nu\in\ul\nu$ then by Remark~\ref{RemBH} there is an equivalence between the category of local $\wt\BP_{\nu}$-shtukas over $S$ for which the Frobenius $\tauGlob$ is an isomorphism outside $\Var(\Fa_{\nu,0})$ and the category of local $\BP_{\nu}$-shtukas over $S$. In particular, if $\wh\Gamma_\nu(\ul\CG)$ is the local $\BP_\nu$-shtuka associated with $\ul\CG$ in Definition~\ref{DefGlobalLocalFunctor}, then every isogeny $f\colon\ul\CL'_\nu\to\wh\Gamma_\nu(\ul\CG)$ corresponds under this equivalence to an isogeny $\tilde f\colon\ul{\wt\CL}{}'_\nu\to L^+_\nu(\ul\CG)$ as in the proposition. We obtain a global $\FG$-shtuka $\tilde f^*\ul\CG$ which we also denote by $f^*\ul\CG$. It satisfies $\wh\Gamma_\nu(f^*\ul\CG)=\ul\CL'_\nu$ and $\wh\Gamma_\nu(g\colon f^*\ul\CG\to\ul\CG)=f$.
\end{remark}

\begin{proof}[Proof of Proposition~\ref{PropLocalIsogeny}]
Let us set $\ul{\CG}:=(\CG,\tauGlob)$. Let $(\dot{\CG},L^+_\nu(\CG), \phi)$ be the triple for the place $\nu$ associated with the $\FG$-torsor $\CG$ by Lemma~\ref{LemmaBL}. Thus $L^+_\nu(\ul\CG)=(L^+_\nu(\CG),\tauGlob)$. We also set $\ul{\wt\CL}{}'_\nu=({\wt\CL}{}'_\nu,\tauGlob')$. Now the triple $(\dot{\CG}_, {\wt\CL}{}'_\nu, \tilde f^{-1}\phi)$ defines a $\FG$-bundle $\CG'$ over $C_\SSS$ which coincides with $\CG$ over $C'_S$ and inherits the Frobenius automorphism $\tauGlob$ from $\ul\CG$ over $C'_\SSS\setminus \bigcup_i\Gamma_{\charsect_i}$. If $\nu\notin\ul\nu$ then this $\tauGlob$ extends to an isomorphism over $\{\nu\}\times_{\BF_q}S$ because $\ul{\wt\CL}{}'_\nu$ is \'etale. If $\nu\in\ul\nu$ then $\tauGlob$ extends to an isomorphism over $C_S\setminus\bigcup_i\Gamma_{\charsect_i}$ because $\tauGlob'$ is an isomorphism outside $\Var(\Fa_{\nu,0})$. This defines the global $\FG$-shtuka $\ul\CG'\in\nabla_n\scrH^1(C,\FG)^{{\ul\nu}}(\SSS)$. The quasi-isogeny $g$ is obtained from the identification $\dot\CG'=\dot\CG$. It has the desired properties.
\end{proof}

\bigskip

Finally we want to prove rigidity for quasi-isogenies of global $\FG$-shtukas. This is the global counterpart of Proposition~\ref{PropRigidityLocal} and fits into the analogy between abelian varieties and global $\FG$-shtukas. It only holds over schemes $S\in\Nilp_{A_{\ul\nu}}$, similarly to rigidity for abelian varieties which only holds over schemes $S\in\Nilp_{\BZ_p}$.

\begin{proposition}\label{PropGlobalRigidity}
\emph{(Rigidity of quasi-isogenies for global $\FG$-shtukas)}
Let $S$ be a scheme in $\Nilp_{A_{\ul\nu}}$ and let $j \colon  \bar{S}\rightarrow S$ be a closed immersion defined by a sheaf of ideals $\CI$ which is locally nilpotent.
Let $\ul{\CG}=(\CG,\tauGlob)$ and $\ul{\CG}'=(\CG',\tauGlob')$ be two global $\FG$-shtukas over $S$. Then
$$
\QIsog_S(\ul{\CG}, \ul{\CG}') \longto \QIsog_{\bar{S}}(j^*\ul{\CG}, j^*\ul{\CG}') ,\quad f \mapsto j^*f
$$
is a bijection of sets. $f$ is an isomorphism at a place $\nu\notin\ul\nu$ if and only if $j^*f$ is an isomorphism at $\nu$.
\end{proposition}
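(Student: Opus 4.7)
The plan is to exploit the fact that $q$-power Frobenius on $S$ eventually kills the locally nilpotent ideal sheaf $\CI$, so that the $N$-fold iterate $\sigma_S^N$ factors through $\bar{S}$ for $N\gg 0$. This allows us to use the Frobenius connections $\tauGlob$ and $\tauGlob'$ to transport a quasi-isogeny over $\bar{S}$ back to $S$. This mirrors the standard argument for rigidity of quasi-isogenies going back to Drinfeld for $p$-divisible groups, and it is a direct global analog of Proposition~\ref{PropRigidityLocal}. First I would work locally on $S$ and choose $N\in\BN$ with $\CI^{q^N}=0$, so that $\sigma_S^N\colon S\to S$ factors as $S\xrightarrow{p}\bar{S}\xrightarrow{j}S$. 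Setting $\sigma=\id_C\times\sigma_S$ and $\tauGlob^N:=\tauGlob\circ\sigma^*\tauGlob\circ\cdots\circ\sigma^{(N-1)*}\tauGlob$ (and analogously $(\tauGlob')^N$), these are isomorphisms on the open subscheme $V:=C_S\setminus\bigcup_{k=0}^{N-1}\sigma^{-k}\bigl(\bigcup_i\Gamma_{\charsect_i}\bigr)$, whose complement is supported over the finite set $\{\nu_1,\ldots,\nu_n\}\subset C$ since each $\Gamma_{\charsect_i}$ lies over $\nu_i$ and $\sigma$ acts trivially on the $C$-factor.

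Next, for any $f\in\QIsog_S(\ul\CG,\ul\CG')$ defined on $C_S\setminus D_S$, iterating the defining relation $\tauGlob'\circ\sigma^*f=f\circ\tauGlob$ yields $(\tauGlob')^N\circ(\sigma^N)^*f=f\circ\tauGlob^N$ on $V\setminus D_S$. Combined with the identification $(\sigma^N)^*f=p^*(j^*f)$ this gives the key formula
\[
f\;=\;(\tauGlob')^N\circ p^*(j^*f)\circ(\tauGlob^N)^{-1}.
\]
This formula shows immediately that $f$ is determined by $j^*f$ (injectivity). Conversely, given any $\bar{f}\in\QIsog_{\bar{S}}(j^*\ul\CG,j^*\ul\CG')$ defined on $C_{\bar{S}}\setminus D_{\bar{S}}$, the same formula with $\bar{f}$ in place of $j^*f$ defines a candidate isomorphism $f$ of $\FG$-torsors on $C_S$ minus an effective divisor $D''$ which enlarges $D$ by the supports of all the Frobenius translates $\sigma^{-k}(\Gamma_{\charsect_i})$; all of these are supported over $\ul\nu$. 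Verifying that this $f$ satisfies the Frobenius equivariance $\tauGlob'\circ\sigma^*f=f\circ\tauGlob$ is then a direct bookkeeping check using the equivariance of $\bar{f}$ together with the definitions of $\tauGlob^N$ and $(\tauGlob')^N$.

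Finally, for the last claim: at a place $\nu\notin\ul\nu$, both $\tauGlob$ and $\tauGlob'$ are isomorphisms in a formal neighborhood of $\{\nu\}\times S$, hence so are $\tauGlob^N$ and $(\tauGlob')^N$; the key formula then directly shows that $f$ is an isomorphism at $\nu$ if and only if $\bar{f}$ is. For $\nu\in\ul\nu$, however, the iterated Frobenius $\tauGlob^N$ has ``poles'' along the characteristic divisors, so even if $\bar{f}$ is an isomorphism at $\nu$, the formula only forces $f$ to be a quasi-isogeny near $\nu$, which is why the equivalence must fail in general. The main technical point to grind through is simply making the formulaic construction precise enough to confirm that the resulting $f$ is a genuine isomorphism of $\FG$-torsors on $C_S\setminus D''_S$ (not merely a rational map) and is Frobenius-equivariant; this is a routine but tedious verification that essentially reduces to the formal calculation used in \cite[Proposition~3.9]{H-V}, with the only new feature being the presence of the characteristic sections $\charsect_i$, which are harmless because they lie over $\ul\nu$ and can be absorbed into $D''$.
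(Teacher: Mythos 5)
Your proof is correct and follows essentially the same route as the paper: both rest on the observation that a power of the absolute Frobenius of $S$ factors through $j\colon\bar S\to S$, so that the Frobenius connections $\tauGlob$, $\tauGlob'$ transport $j^*f$ back to $f$ away from $\ul\nu\times_{\BF_q}S$. The only cosmetic difference is that the paper first reduces to the case $\CI^q=(0)$ and then uses a single Frobenius twist, whereas you iterate $N$ times; the content is identical.
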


Note that the last assertion need not be true for places $\nu\in\ul\nu$. This is similar to lifts of quasi-isogenies between abelian varieties over schemes $S\in\Nilp_{\BZ_p}$ which can acquire additional ``poles'' at $p$.

\begin{proof}[Proof of Proposition~\ref{PropGlobalRigidity}]
It suffices to treat the case where $\CI^q = (0)$. In this case the morphism $\sigma_S$ factors through $j\colon \bar{S}\to S$
$$
\sigma_S = j \circ \sigma'\colon  S \xrightarrow{\:\,\sigma'\,} \bar{S} \xrightarrow{\;\; j\:\,} S.
$$
Since $C_S\setminus\cup_i\Gamma_{\charsect_i}\supset C_S\setminus\ul\nu\times_{\BF_q}S$, the morphism $\tau$ defines a quasi-isogeny $\tau\colon \sigma'^*j^*\ul\CG=\sigma_S^*\ul\CG \rightarrow \ul\CG$ which is an isomorphism outside $\ul\nu\times_{\BF_q}S$ and similarly for $\ul\CG'$. We fix a finite closed subset $D\subset C$ which contains all $\nu_i$ and consider quasi-isogenies which are isomorphisms outside $D$. Then the following diagram
$$
\CD
\CG|_{C_S\setminus D_S} @>{f}>\cong>\CG'|_{C_S\setminus D_S}\\
@A{\tau}A{\cong}A @A{\tau'}A{\cong}A\\
\sigma_S^*\CG|_{C_S\setminus D_S} @>{\sigma'^*(j^*f)}>\cong>\sigma_S^*\CG'|_{C_S\setminus D_S}
\endCD
$$
allows to recover $f$ from $j^*f$ and this proves the bijectivity.

If $\nu\notin\ul\nu$ we take a subset $D\subset C$ which does not contain $\nu$. Chasing through the diagram again shows that $f$ is an isomorphism at $\nu$ if and only if $j^*f$ is an isomorphism at $\nu$.
\end{proof}

\subsection{The Analog of the Serre--Tate Theorem}\label{SectSerre-Tate}
\bigskip

The Serre--Tate Theorem relates the deformation theory of an abelian variety in characteristic $p$ with the deformation theory of the associated $p$-divisible group. In this section we introduce the analogous situation over function fields and prove the analogous theorem relating the deformation theory of a global $\FG$-shtuka to the deformation theory of the associated $n$-tuple of local $\BP_{\nu_i}$-shtukas via the global-local functor.

\bigskip
Let $S$ be in $\Nilp_{A_{\ul\nu}}$ and let $j\colon  \bar{S}\to S$ be a closed subscheme defined by a locally nilpotent sheaf of ideals $\CI$. Let $\bar{\ul{\CG}}$ be a global $\FG$-shtuka in $\nabla_n\scrH^1(C,\FG)^{\ul \nu}(\bar{S})$. The category $Defo_S(\bar{\ul{\cG}})$ of lifts of $\bar{\ul{\cG}}$ to $S$ consists of all pairs $(\ul{\cG},  \alpha\colon j^*\ul{\cG}\isoto \bar{\ul{\cG}})$ where $\ul{\CG}$ belongs to $\nabla_n\scrH^1(C,\FG)^{\ul \nu}(S)$, where $\alpha$ is an isomorphism of global $\FG$-shtukas over $\bar{S}$, and where morphisms are isomorphisms between the $\ul{\cG}$'s that are compatible with the $\alpha$'s.

Similarly for a local $\BP$-shtuka $\ul{\bar\CL}$ in $\Sht_{\BP}^{\BD}(\bar{S})$ we define the category of lifts $Defo_S(\ul{\bar\CL})$ of $\ul{\bar\CL}$ to $S$. Notice that according to the rigidity of quasi-isogenies (Propositions~\ref{PropGlobalRigidity} and \ref{PropRigidityLocal}) all Hom-sets in these categories contain at most one element.

\bigskip

\begin{theorem}\label{Serre-Tate} 
Let $\bar{\ul{\cG}}:=(\bar{\CG},\bar{\tau})$ be a global $\FG$-shtuka in $\nabla_n\scrH^1(C,\FG)^{\ul \nu}(\bar{S})$. Let $(\ul{\bar\CL}_i)_i=\wh{\ul\Gamma}(\bar{\ul{\cG}})$. Then the functor 
$$
Defo_S(\bar{\ul{\cG}})\longto \prod_i Defo_S(\ul{\bar\CL}_i)\,,\quad \bigl(\ul\CG,\alpha)\longmapsto(\ul{\wh\Gamma}(\ul\CG),\ul{\wh\Gamma}(\alpha)\bigr)
$$ 
induced by the global-local functor, is an equivalence of categories.
\end{theorem}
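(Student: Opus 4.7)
The plan is to construct a quasi-inverse functor using iterated Beauville--Laszlo glueing (Lemma~\ref{LemmaBL}) combined with a Frobenius pullback trick. First I would reduce to the case $\CI^q=0$ by the standard d\'evissage along the chain $\CI\supset\CI^q\supset\CI^{q^2}\supset\ldots$, which exhibits $j$ as a finite composition of closed immersions defined by such ideals. Under this hypothesis $\sigma_S$ annihilates $\CI$, so it factors as $\sigma_S=j\circ\sigma'$ for a unique $\sigma'\colon S\to\bar S$; naturality of Frobenius then forces $\sigma'\circ j=\sigma_{\bar S}$, and this factorization is the technical device that makes the construction work.

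Full faithfulness is essentially automatic from rigidity (Propositions~\ref{PropGlobalRigidity} and \ref{PropRigidityLocal}): the Hom-sets in both deformation categories have at most one element, so it suffices to check that a morphism exists on the global side if and only if the corresponding morphism exists between the local shtuka lifts. By iterating Lemma~\ref{LemmaBL} at each $\nu_i$, a global $\FG$-shtuka on $S$ is equivalent to a triple consisting of a $\FG$-torsor $\dot\CG$ on $C'_S:=(C\setminus\ul\nu)\times_{\BF_q}S$ equipped with a Frobenius isomorphism $\dot\tau\colon\sigma^*\dot\CG\isoto\dot\CG$ (which is a genuine isomorphism on all of $C'_S$, because the characteristic sections $s_i$ factor through $\Spf A_{\nu_i}$), an $n$-tuple of local $\wt\BP_{\nu_i}$-shtukas $L^+_{\nu_i}(\ul\CG)$, and glueing isomorphisms $\phi_i\colon L_{\nu_i}(\dot\CG)\isoto L(L^+_{\nu_i}(\CG))$ that are compatible with all the Frobenii. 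Via Remark~\ref{RemBH} the local $\wt\BP_{\nu_i}$-shtukas correspond to the local $\BP_{\nu_i}$-shtukas $\wh\Gamma_{\nu_i}(\ul\CG)$, so the theorem reduces to showing that the triple lifts uniquely whenever the local shtuka data do.

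For essential surjectivity, given target lifts $(\ul\CL_i,\beta_i)_i$, I would lift each piece of the triple separately: (i) set $\dot\CG:=(\id_C\times\sigma')^*\dot{\bar\CG}$ and $\dot\tau:=(\id_C\times\sigma')^*\dot{\bar\tau}$ on $C'_S$, using the identity $\sigma'\circ j=\sigma_{\bar S}$ to produce the canonical identification $j^*\dot\CG=\sigma^*\dot{\bar\CG}$ and taking the restriction isomorphism to be $\dot{\bar\tau}$ via this identification; (ii) take the Weil-restricted version of $\ul\CL_i$ from Remark~\ref{RemBH} as the lift of $L^+_{\nu_i}(\bar\CG)$; (iii) lift the glueing $\bar\phi_i$ to $\phi_i$ by solving, after trivialization, a Frobenius equation of the form $g_i=b_i\cdot\hat\sigma^*(g_i)\cdot (b'_i)^{-1}$ for an element $g_i\in L\wt\genericG_{\nu_i}(S)$ lifting $\bar g_i$. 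This equation admits a unique solution by iterated substitution in the $\CI$-adic topology, which terminates after finitely many steps because $\CI$ is locally nilpotent. Assembling the resulting triple via Lemma~\ref{LemmaBL} in the reverse direction produces the desired global $\FG$-shtuka lift $\ul\CG$, whose image under $\ul{\wh\Gamma}$ is by construction $(\ul\CL_i)_i$.

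The main obstacle I expect is the compatibility verification in step (iii): checking that the Frobenius equation for $g_i$ has a consistent solution meshing with $\dot\tau$, with the local Frobenius of $\ul\CL_i$, and with the hypotheses of Lemma~\ref{LemmaBL}. This compatibility is the direct converse of the rigidity statements, and should unwind to the same contracting-Frobenius argument used in the proofs of Propositions~\ref{PropGlobalRigidity} and \ref{PropRigidityLocal}.
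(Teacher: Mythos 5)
Your proposal is correct and follows essentially the same route as the paper: reduce to $\CI^q=(0)$, use the factorization $\sigma_S=j\circ\sigma'$ to take the Frobenius twist of $\bar{\ul\CG}$ as the seed lift (quasi-isogenous to $\bar{\ul\CG}$ via $\bar\tau$), lift the discrepancy at each $\nu_i$ by rigidity, and reassemble by Beauville--Laszlo glueing. The only difference is one of packaging: where you unwind everything into explicit glueing triples and a Frobenius fixed-point equation $g_i=b_i\,\hat\sigma^*(g_i)\,(b_i')^{-1}$, the paper invokes Proposition~\ref{PropLocalIsogeny} and Proposition~\ref{PropRigidityLocal} as black boxes, which amount to exactly those two steps.
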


\begin{proof} 
We proceed by constructing the inverse of the above functor. It suffices to treat the case where $\CI^q = (0)$. In this case the morphism $\sigma_S$ factors through $j\colon \bar{S}\to S$
$$
\sigma_S = j \circ \sigma'\colon  S \xrightarrow{\:\,\sigma'\,} \bar{S} \xrightarrow{\;\; j\:\,} S.
$$
Let $(\ul\CL_i,\hat{\alpha}_i\colon j^*\ul\CL_i \isoto \ul{\bar\CL}_i)_i$ be an object of $\prod_i Defo_S(\ul{\bar\CL}_i)$. Consider the global $\FG$-shtuka $\ul{\wt{\cG}}:=(\wt\CG,\tilde\tau):= \sigma'^*\ul{\bar{\CG}}$ over $S$. Since $C_S\setminus\cup_i\Gamma_{\charsect_i}\supset C_S\setminus\ul\nu\times_{\BF_q}S$, the morphism $\bar{\tau}$ defines a quasi-isogeny $\bar{\tau}\colon j^* \ul{\wt{\cG}} \rightarrow \bar{\ul{\cG}}$ which is an isomorphism outside the graphs of the characteristic sections as one sees from the following diagram
$$
\CD
\sigma_S^*j^*{\wt{\cG}}@={\left(\sigma_S^2\right)^\ast}{\bar{\cG}}@>{\sigma_S^*\bar{\tau}}>>\sigma_S^*\bar{\cG}\\
@Vj^*\tilde\tau VV @V{\sigma_S^*{\bar{\tau}}}VV @V{\bar{\tau}}VV\\
j^*\wt{\cG}@=\sigma_S^*\bar{\cG}@>{\bar{\tau}}>>\;\bar{\cG}\,.
\endCD
$$
We write $\ul{\wh\Gamma}(\ul{\wt\CG})=(\ul{\wt\CL}_i)_i$ and $\ul{\bar\CL}_i=(\bar\CL_i,\hat{\bar\tau}_i)$. We compose ${\hat{\bar{\tau}}_i}^{-1}$ with $\hat{\alpha}_i$ to obtain the quasi-isogenies $\hat{\bar{\gamma}}_i:= {\hat{\bar{\tau}}_i}^{-1} \circ \hat{\alpha}_i \colon j^*\ul\CL_i \rightarrow j^*\ul{\wt\CL}_i$. By rigidity of quasi-isogenies (Proposition~\ref{PropRigidityLocal}) they lift to quasi-isogenies $\hat{\gamma}_i\colon \ul\CL_i \rightarrow \ul{\wt\CL}_i$ with $j^*\hat\gamma_i=\hat{\bar{\gamma}}_i$. We put $\ul{\CG}:=\hat\gamma_n^*\circ\ldots\circ\hat\gamma_1^*\ul{\wt\CG}$ (see Remark~\ref{RemLocalIsogeny}) and recall that there is a quasi-isogeny $\gamma\colon  \ul{\cG} \rightarrow \ul{\wt{\cG}}$ of global $\FG$-shtukas with $\ul{\wh\Gamma}(\gamma)=(\hat\gamma_i)_i$ which is an isomorphism outside $\ul\nu$, see Proposition~\ref{PropLocalIsogeny}. We may now define the functor 
$$
\prod_i Defo_S(\ul{\bar\CL}_i)\rightarrow Defo(\bar{\ul{\cG}})
$$
by sending $(\ul\CL_i,\hat{\alpha}_i\colon j^*\ul\CL_i \rightarrow \ul{\bar\CL}_i)_i$ to $(\ul{\cG},\bar{\tau}\circ j^*\gamma)$. The quasi-isogeny $\alpha:=\bar{\tau}\circ j^\ast \gamma$ is an isomorphism outside the graphs of the $\charsect_i$ by construction, and also at these graphs because $\ul{\wh\Gamma}(\alpha)=(\hat\alpha_i)_i$. It can easily be seen by the above construction that these functors are actually inverse to each other.
\end{proof}

%%%%%%%%%%%%%%%%%%%%%%%%%%%%%%%%%%%%%%%%%%%%%%%%%%%%%%%%%%%%%%%%%%%%%%
%
%    Bibliography
%
%%%%%%%%%%%%%%%%%%%%%%%%%%%%%%%%%%%%%%%%%%%%%%%%%%%%%%%%%%%%%%%%%%%%%%

{\small

}

\vfill

\begin{minipage}[t]{0.5\linewidth}
\noindent
Esmail Arasteh Rad\\
Universit\"at M\"unster\\
Mathematisches Institut \\
Einsteinstr.~62\\
D -- 48149 M\"unster
\\ Germany
\\[1mm]
\end{minipage}
\begin{minipage}[t]{0.45\linewidth}
\noindent
Urs Hartl\\
Universit\"at M\"unster\\
Mathematisches Institut \\
Einsteinstr.~62\\
D -- 48149 M\"unster
\\ Germany
\\[1mm]
\href{http://www.math.uni-muenster.de/u/urs.hartl/index.html.en}{www.math.uni-muenster.de/u/urs.hartl/}
\end{minipage}

\end{document}